\documentclass[12pt,a4paper, reqno]{amsart}
\usepackage{amsfonts}
\usepackage{amsmath}
\usepackage{amssymb}
\usepackage{latexsym}
\usepackage{exscale}

\usepackage[colorlinks=true, pdfstartview=FitH, linkcolor=blue, citecolor=red, urlcolor=blue]{hyperref}

\allowdisplaybreaks

\headheight=8pt
\topmargin=0pt
\textheight=660pt
\textwidth=436pt
\oddsidemargin=10pt
\evensidemargin=10pt
\parskip=0.1cm

\newtheorem{theorem}{Theorem}[section]
\newtheorem{corollary}[theorem]{Corollary}
\newtheorem{proposition}[theorem]{Proposition}
\newtheorem{lemma}[theorem]{Lemma}

\theoremstyle{definition}\newtheorem{definition}[theorem]{Definition}
\theoremstyle{remark}\newtheorem{remark}[theorem]{Remark}
\theoremstyle{remark}\newtheorem{example}{Example}

\numberwithin{equation}{section}

\newcommand{\R}{\mathbb{R}}
\newcommand{\re}{\mathbb{R}}
\newcommand{\co}{\mathbb{C}}

\def\div{\mathop{\operatorname{div}}}

\newcommand{\Qcal}{\mathcal{Q}}
\newcommand{\F}{\mathcal{F}}

\newcommand{\Id}{\mathcal{I}}

\renewcommand{\emptyset}{\textup{\mbox{\O}}}

\DeclareMathOperator*{\essinf}{ess\,inf}

\def\Xint#1{\mathchoice
   {\XXint\displaystyle\textstyle{#1}}%
   {\XXint\textstyle\scriptstyle{#1}}%
   {\XXint\scriptstyle\scriptscriptstyle{#1}}%
   {\XXint\scriptscriptstyle\scriptscriptstyle{#1}}%
   \!\int}
\def\XXint#1#2#3{{\setbox0=\hbox{$#1{#2#3}{\int}$}
     \vcenter{\hbox{$#2#3$}}\kern-.5\wd0}}

\def\aver#1{\Xint-_{#1}}

\author{Fr\'ed\'eric Bernicot}
\address{Fr\'ed\'eric Bernicot \\ Laboratoire de Math\'ematiques Jean Leray \\ 2, Rue de la Houssini\`ere F-44322 Nantes Cedex 03, France.}
\email{frederic.bernicot@univ-nantes.fr}

\author{Jos\'e Mar{\'\i}a Martell}
\address{Jos\'e Mar{\'\i}a Martell
\\
Instituto de Ciencias Matem\'aticas CSIC-UAM-UC3M-UCM
\\
Consejo Superior de Investigaciones Cient{\'\i}ficas
\\
C/ Nicol\'as Cabrera, 13-15
\\
E-28049 Madrid, Spain} \email{chema.martell@icmat.es}

\thanks{The second author was supported by MINECO Grant MTM2010-16518 and ICMAT Severo Ochoa project SEV-2011-0087. Both authors wish to thank
Pascal Auscher, Steve Hofmann and Svitlana Mayboroda for  helpful comments concerning some of the applications.}

\day=21 \month=03 \year=2013
\date{\today}

\subjclass[2000]{46E35 (47D06, 46E30, 42B25)}
\keywords{Self-improving properties, weights, good-$\lambda$ inequalities}

\begin{document}
\title{Self-improving properties for abstract Poincar\'e type inequalities}

\subjclass[2010]{46E35 (47D06, 46E30, 42B25, 58J35)}
\keywords{Self-improving properties, BMO and Lipschitz spaces, John-Nirenberg inequalities, generalized Poincar\'e-Sobolev inequalities, pseudo-Poincar\'e inequalities, semigroups, dyadic cubes, weights, good-$\lambda$ inequalities.}

\begin{abstract}
We study self-improving properties in the scale of Lebesgue spaces of generalized Poincar\'e inequalities in the Euclidean space. We present an abstract setting where oscillations are given by certain operators (e.g., approximations of the identity, semigroups or mean value operators) that have off-diagonal decay in some range. Our results provide a unified theory that is applicable to the classical Poincar\'e inequalities and furthermore it includes oscillations defined in terms of semigroups associated with second order elliptic operators as those in the Kato conjecture. In this latter situation we obtain a direct proof of the John-Nirenberg inequality for the associated $BMO$ and Lipschitz spaces of \cite{Hof-May, Hof-May-Mc}.
\end{abstract}

\newcommand{\status}[1]{\vskip-1.2cm\noindent\parbox{\textwidth}{%
\hfill \texttt{#1}}\vskip.2cm\null}

\status{Trans.~Amer.~Math.~Soc.~367, no.~7, (2015), 4793-4835. }

\maketitle

\begin{quote}
\footnotesize\tableofcontents
\end{quote}

\section{Introduction}

There are various inequalities in analysis that encode intrinsic self-improving properties of the oscillations of the functions involved. For instance, the classical John-Nirenberg inequality (see \cite{JN}) states that for every $f\in L_{\rm loc}^1(\re^n)$ such that
$$
\aver{Q} |f-f_Q|\,dx \leq C
$$
for every cube $Q$, (i.e., $f\in BMO$),
the oscillations $|f-f_Q|$ are exponentially integrable. In particular, for every $1<p<\infty$,
$$
\aver{Q} |f-f_Q|^p \,dx \leq C.
$$
Therefore, the oscillations $|f-f_Q|$, which are a priori in $L^1(Q)$, turn out to be in the ``better'' space $L^p(Q)$. The same occurs with the well-known fact that $(1,1)$-Poincar\'e inequality in $\R^n$, $n\ge 2$,
$$
\aver{Q} |f-f_Q|\,dx \leq C\,r_Q\,
\aver{Q}|\nabla f|\,dx,
$$
implies that, for all $1\le p<n$,
$$
\left(\aver{Q}\left|f-f_Q \right|^{p^*}\,dx\right)^{1/p^*}
\leq
C\,r_Q \left(\aver{Q} |\nabla f|^p\, dx\right)^{1/p},
$$
where $p^*=\frac{p\,n}{n-p}$. Again, if $f$ is such that $\nabla f\in L^p_{\rm loc}(\re^n)$, the fact that the oscillation is in $L^p(Q)$ yields that the oscillation is indeed in the smaller space $L^{p*}(Q)$. B. Franchi, C. P\'erez and R.L. Wheeden in \cite{Franchi-Perez-Wheeden} gave a unified approach to these kinds of estimates. Namely,  they start with inequalities of the form
\begin{equation}\label{des:Poincare generalizada}
\int \left|f- f_Q \right|\, dx \leq a(Q,f),
\end{equation}
where $a$ is a functional depending on the cube $Q$, and sometimes on the function $f$. Using the Calder\'on-Zygmund theory
and the good-$\lambda$ inequalities introduced by D.L. Burkholder and R.F. Gundy \cite{BG} these authors present a general method that gives $L^p(Q)$  integrability of the oscillation $|f-f_Q|$ under mild geometric conditions on the functional $a$. Thus, inequality \eqref{des:Poincare generalizada} encodes an
intrinsic self-improvement on $L^p$ for $p>1$.

Generalizations of the previous estimates have been already studied. One can define new oscillations by replacing the averaging operator $f_Q$ by some other operator $A_{t_Q} f$ (here $t_Q$ is a parameter defined in terms of the sidelength of $Q$ that ``scales'' $A_t$ to $Q$) where $\mathbb{A}:=(A_t)_{t>0}$ is a semigroup (or, more in general, some ``approximation of the identity''). In the particular case where the kernels of the operators  have enough decay (i.e., the family $\mathbb{A}$ satisfies $L^1-L^\infty$ off-diagonal estimates) this new way of measuring the oscillation allowed the second author \cite{martell-04} to introduce a new sharp maximal operators by simply replacing $|f-f_Q|$ by $|f-A_{t_Q}f|$. This gave raise to a new $BMO$ space introduced by X.T. Duong and L. Yan \cite{DY-CPAM}, \cite{DY-JAMS} which also enjoys the John-Nirenberg inequality. This ultimately says that an $L^1(Q)$ estimate for $|f-A_{t_Q}f|$ implies $L^p(Q)$ estimates for these new oscillations. When the family $\mathbb{A}$ is a semigroup generated by a second order divergence form elliptic operator with complex coefficients, S. Hofmann and S. Mayboroda in \cite{Hof-May} establish the John-Nirenberg inequality for the associated $BMO$ space using an indirect argument passing through the Carleson measure characterization of that space. We also refer the reader to \cite{BZ3} for a recent work where the first author and J. Zhao obtain such properties with more abstract operators and describe some applications to Hardy spaces.

The study of generalized Poincar\'e inequalities {\it \'a la} \cite{Franchi-Perez-Wheeden} for these new oscillations associated with a family $\mathbb{A}$ with $L^1-L^\infty$ decay has been recently studied by N. Badr, A. Jim\'enez-del-Toro and the second author in the papers \cite{Jim, JM, BJM}. The exponential self-improving (which corresponds to the case where the functionals $a$ above are quasi-increasing) is considered in \cite{Jim} and covers in particular the John-Nirenberg inequality for the $BMO$ space of \cite{DY-CPAM}. The $L^p$ self-improving is thoroughly studied in \cite{JM, BJM} where different applications to generalized (pseudo) Poincar\'e inequalities with oscillations $|f-A_{t_Q} f|$ are given.

In this paper, continuing the previous line of research we present a final result to the study of self-improving of generalized Poincar\'e inequalities. The main goal is to describe an abstract setting containing existing results for oscillations based both on the average operators and on an approximation of the identity satisfying off-diagonal estimates.
More precisely, our main result is written in an abstract way with oscillations depending on a family ${\mathbb A}:=(A_Q)_{Q}$ that is indexed by the cubes and that satisfy off-diagonal estimates in some range $[p_0,q_0]$ with $1\le p_0<q_0\le\infty$. These estimates contain the ``cancelation'' provided by the compositions $(1-A_{Q_1}) A_{Q_2}$ for cubes  $Q_1 \subset Q_2$ and this is one of the original ideas in the paper.

We can recover the results in \cite{Franchi-Perez-Wheeden} where $p_0=1$, $q_0=\infty$ and $A_Q$ is essentially the averaging operator on $Q$ ---here the fact that the operators are allowed to depend on $Q$ in place of its sidelength is crucial---  and also the results in \cite{Jim,JM} where again $p_0=1$, $q_0=\infty$ and $A_Q=A_{t_Q}$ as explained above. Besides, our results are applicable to semigroups $e^{-t\,L}$ with $L$ being a second order divergence form elliptic operator with complex coefficients. In such a case we obtain a direct proof of the John-Nirenberg inequality for the $BMO$ space defined in \cite{Hof-May}. We also obtain similar estimates for the corresponding Lipschitz spaces introduced in \cite{Hof-May-Mc} and this allows us to define these spaces using $L^p$ averages (in place of $L^2$ as done in \cite{Hof-May-Mc}) of the oscillations. Also our results can be applied to derive some generalized Poincar\'e inequalities with right hand sides that are dyadic expansions taking into account the lack of localization of the semigroups.

The plan of the paper is as follows. In Section \ref{section:preliminaries} we present some preliminaries. Section \ref{section:main-result} contains the main result Theorem \ref{theorem:Lp} and its generalizations. Among them we point out Theorems \ref{theorem:Lp-alternative} and \ref{theorem:Lp-alternative-bis} where we remove the commutative condition assumed on the oscillation operators and in the latter we further impose some localization property that allows us to recover the results from \cite{Franchi-Perez-Wheeden}. Additionally Theorem \ref{theorem:Lp-w} contains some extension of the main result with Muckenhoupt weights in the estimates. Applications are presented in Section \ref{section:appl}. We first consider John-Nirenberg inequalities for general $BMO$ spaces. We also propose some functionals and oscillation operators that fulfill the required hypotheses. All these are applied to the case of second order divergence form elliptic operators obtaining the new direct proof of the John-Nirenberg inequality for the associated $BMO$ space from \cite{Hof-May}, the new analogous result for the associated Lipschitz spaces from \cite{Hof-May-Mc} and some ``expanded'' Poincar\'e type inequalities. Finally Section \ref{sec:proof} is devoted to the proofs of the main and auxiliary results.

\section{Preliminaries}\label{section:preliminaries}

Let us consider the Euclidean space $\R^n$  with the Lebesgue measure $dx$ and the distance $|x-y|=|x-y|_\infty=\max_{1\le i\le n}|x_i-y_i|$. Without loss of generality, we assume that the cubes are of the form
$\prod_{i=1}^n \big[a_i, a_i + \ell(Q)\big)$, with $a_i\in \R$ and where $\ell(Q)$ denotes the sidelength of $Q$. Given a cube $Q\subset\R^n$ we denote its center by $x_Q$ and its sidelength by $\ell(Q)$. For any $\lambda>1$, we denote by $\lambda\,Q$ the cube concentric with $Q$ so that $\ell(\lambda\,Q)=\lambda\,\ell(Q)$.
We write $L^p$ for $L^p(\R^n,\R)$ or $L^p(\R^n,{\mathbb C})$.
The average of $f\in L^1(Q)$ in $Q$ is denoted by
$$
f_Q=\aver{Q} f(x)\,dx
=
\frac1{|Q|}\,\int_{Q} f(x)\,dx.
$$
The localized and normalized ``norm'' of a Banach or a quasi-Banach function space $X$ is written as
$$
\|f\|_{X,Q}=\|f\|_{X (Q, dx/|Q|)}
$$
Examples of spaces $X$ are $L^{p,\infty}$, $L^p$ or more general
Marcinkiewicz and Orlicz spaces.

Let us denote by $\Qcal$ the collection of all cubes in $\re^n$. We write $M$ for the maximal Hardy-Littlewood function:
$$
M f(x)= \sup_{{\genfrac{}{}{0pt}{}{Q\in\Qcal}{x\in Q}}}  \aver{Q}|f|dx.$$
For $p\in [1,\infty)$, we set $M_p f(x)=M(|f|^p)(x)^{1/p}$.

\subsection{Oscillation operators}
Let $\mathbb{B}:=(B_Q)_{Q\in \Qcal}$ be a family of linear operators indexed by the collection $\Qcal$.
The reader may find convenient to think of $B_Q$ as being some kind of ``oscillation operator'' on the cube $Q$. For each cube $Q$, we set $A_Q:=I-B_Q$ and $A_Q$ could be thought as an approximation of the identity.

\begin{example} The classical oscillation operator is defined as
$$
B_Q f:= f - \left(\aver{Q} f\,dx\right) \chi_Q.
$$
In this case, $A_Q f=\big(\aver{Q} f\,dx \big) \chi_Q$ is the averaging operator on $Q$. We consider a variant of this example in Section \ref{subsec:comm} below.
\end{example}

\begin{example}
Given  a differential operator $-L$ that generates a semigroup $e^{-t\,L}$, we can choose the oscillations
$$
B_Q f := f - e^{-\ell(Q)^mL}f
\qquad \mbox{or}\qquad
B_Q f := (I - e^{-\ell(Q)^mL})^N f
$$
depending on the order $m$ of $L$ and with $N\ge 1$.
\end{example}

We refer the reader to \cite[Section 3]{BZ} for some other oscillation operators and to \cite{BZ2} for a specific example applied to the problem of maximal regularity. Let us notice that a different notation is used in \cite{Jim, JM, BJM}, where there are some family of operators $S_t$ that play the role of generalized approximations of the identity under the assumption that the kernels decay fast enough and the operators commute. In such a case we can define $A_Q= S_{t_Q}$ and $B_Q=I-S_{t_Q}$ where $t_Q=\ell(Q)^m$  for some positive constant $m$.

\begin{definition} \label{def:Off}
Let $1\le p_0\le q_0\le \infty$ and $\mathbb{B}:=(B_Q)_{Q\in \Qcal}$ be as before. We say that $\mathbb{B}$ is $O(p_0,q_0)$ if the following conditions hold:
\begin{list}{$(\theenumi)$}{\usecounter{enumi}\leftmargin=.8cm
\labelwidth=.8cm\itemsep=0.2cm\topsep=.1cm
\renewcommand{\theenumi}{\alph{enumi}}}

\item The operators $B_Q$ commute:  $B_Q\,B_R=B_R\,B_Q$ for every $Q, R\in\Qcal$.

\item The operators $B_Q$ are uniformly bounded on $L^{p_0}(\re^n)$:
\begin{equation} \label{BQ:Lp-uniform}
\|B_Q f \|_{L^{p_0}(\re^n)} \leq C_{p_0}\,\|f\|_{L^{p_0}(\re^n)},
\qquad \mbox{for all } Q\in\Qcal.
\end{equation}

\item The operators $A_Q$ satisfy $L^{p_0}-L^{q_0}$ off-diagonal estimates at the scale $Q$: there exist fast decay coefficients $\alpha_k$, $k\ge 2$, such that for all cubes $Q$,
we have
\begin{equation} \label{AQ:on:p-q}
\left(\aver{2Q} \left|A_Q (f\,\chi_{4\,Q})\right|^{q_0}\,dx\right)^{1/q_0} \leq \alpha_2 \left(\aver{4Q} |f|^{p_0}\,dx \right)^{1/p_0}.
\end{equation}
and for all $j\geq 1$,
\begin{equation} \label{AQ:off:p-q}
\left(\aver{2^{j}Q} \left|A_Q (f\,\chi_{\re^n\setminus 2^{j+1}\,Q})\right|^{q_0}\,dx\right)^{1/q_0} \leq \sum_{k\geq 2} \alpha_{k+j} \left(\aver{2^{k+j} Q} |f|^{p_0}\,dx \right)^{1/p_0}.
\end{equation}

\item The operators $B_R\,A_Q$ satisfy $L^{p_0}-L^{q_0}$ off-diagonal estimates at the lower scale:  there exist fast decay coefficients $\beta_k$, $k\ge 2$, such that for all cubes $R\subset Q$,
\begin{equation}\label{BR-AQ:off:p-q}
\left(\aver{2R} \left|B_R A_Q f\right|^{q_0}\,dx \right)^{1/q_0} \leq \sum_{k\geq 1} \beta_{k+1} \left(\aver{2^{k+1} Q} |f|^{p_0}\,dx \right)^{1/p_0}.
\end{equation}

\end{list}
\end{definition}

Let us observe that when $q_0 = \infty$ one has to change the $L^{q_0}$-norms by the corresponding essential suprema. Notice also that by Jensen's inequality $O(p_0,q_0)$ implies $O(p_1,q_1)$ for all $p_0\le p_1\le q_1\le q_0$. Let us observe that \eqref{AQ:on:p-q} and \eqref{AQ:off:p-q} with $j=1$ yield
\begin{equation}\label{AQ:off:p-q:j=1}
\left(\aver{2Q} \left|A_Q f\right|^{q_0}\,dx\right)^{1/q_0} \leq \sum_{k\geq 2} \alpha_k \left(\aver{2^{k} Q} |f|^{p_0}\,dx \right)^{1/p_0}.
\end{equation}
Besides for all $j\ge 1$, Jensen's inequality, \eqref{BQ:Lp-uniform} and \eqref{AQ:off:p-q} imply the following $L^{p_0}-L^{p_0}$ off-diagonal estimates:
\begin{equation}\label{AQ:off:p-p}
\left(\aver{2^j Q} \left|A_Q f\right|^{p_0}\,dx\right)^{1/p_0} \leq
C\,\left(\aver{2^{j+1} Q} |f|^{p_0}\,dx\right)^{1/p_0} +
\sum_{k\geq 2} \alpha_{k+j} \left(\aver{2^{k+j} Q} |f|^{p_0}\,dx \right)^{1/p_0}.
\end{equation}

\begin{remark}\label{remark:q0-infty}
When $q_0=\infty$ one can show that $(d)$ follows from $(b)$ and $(c)$, details are left to the reader.
\end{remark}

\begin{remark}
In Definition \ref{def:Off}, it is implicitly assumed that the operators $B_Q$ and $A_Q$ are well defined for functions in $L^{p_0}_{\rm loc}(\re^n)$, we just need to write $f=\sum_{l=2}^\infty f_l$
with $f_1=f\,\chi_{4\,Q}$ and $f_l=2^{l+1}Q\setminus 2^l\,Q$ for $l\ge 2$. Note that $(b)$ implies that $B_Q f_l$, $A_Q f_l$ are in $L^{p_0}(\re^n)$. Furthermore \eqref{AQ:off:p-q:j=1} yields $A_Q f_l, B_Q f_l\in L^{q_0}(2Q)$.
\end{remark}

\subsection{Functionals}

We consider functionals
$$
a:\Qcal\times\mathcal{F}\longrightarrow[0,\infty),
$$
where $\mathcal{F}$ is a certain family of functions in $L^{p_0}_{\rm loc}(\re^n)$. When the dependence on the functions is not of our interest, we simply write $a(Q)$. Next, we define the geometric conditions $D_r$, first introduced in
\cite{Franchi-Perez-Wheeden}, \cite{MP} to study self-improving properties of generalized Poincar\'e inequalities associated to classical oscillations.

\begin{definition}\label{def:Dr}
Let $\mu$ be a Borel measure and let $a$ be  a functional as before.
\begin{list}{$(\theenumi)$}{\usecounter{enumi}\leftmargin=.9cm
\labelwidth=.9cm\itemsep=0.2cm\topsep=.1cm
\renewcommand{\theenumi}{\alph{enumi}}}

\item Given $1\le r<\infty$, we say that $a$ satisfies the $D_r(\mu)$ condition or $a\in D_r(\mu)$ (if $\mu$ is the Lebesgue measure we simply write $D_r$) if there exists a finite constant $C_{a}\ge 1$ such that for each cube $Q$ and any family $\{Q_i\}_i\subset Q$ of pairwise disjoint cubes,
$$
\sum_{i} a(Q_i)^r\, \mu(Q_i)\leq C_{a}^r\, a(Q)^r\, \mu(Q).
$$
The infimum of the constants $C_{a}$ is denoted by $\|a\|_{D_r(\mu)}$.

\item We say that $a$ satisfies the $D_\infty$ condition or $a\in D_\infty$ if $a$ is quasi-increasing, that is, there exists a constant $C_{a}\ge 1$ such that for all cubes $R\subset Q$,
$$
a(R)\le C_{a}\, a(Q).
$$
The infimum of the constants $C_{a}$ is denoted by $\|a\|_{D_\infty}$.

\item We say that $a$ is doubling if there exists a constant $C_a>0$ such that for every cube $Q$,
$$
 a(2\,Q) \leq C_a\, a(Q).
$$

\item We say that $a\in D_0$ if it is locally quasi-increasing, that is, there exists a constant $C_{a}\ge 1$ such that for all cubes $R\subset Q$ with $\ell(Q)\le 4\,\ell(R)$,
$$
a(R)\le C_{a}\, a(Q).
$$

\end{list}
\end{definition}

As an immediate consequence of H\"older's inequality,
one sees that the $D_r(\mu)$ conditions are decreasing. That is,
if $1\leq  r < s < \infty$, then
$D_s(\mu)\subset D_r(\mu)$ and $\|a\|_{D_r(\mu)}\leq \|a\|_{D_s(\mu)}$.
This property will be used several times during the proofs. Note also that if $a\in D_\infty$ then $a\in D_r(\mu)$ with $\|a\|_{D_r(\mu)}\leq \|a\|_{D_\infty}$ for any Borel measure $\mu$ and $1\leq r<\infty$. We also notice that $D_1(\mu)\subset D_0$ for any Borel doubling measure $\mu$: if $Q\subset R$ is such that $\ell(R)\le 4\,\ell(Q)$ we have $R\subset 8\,Q$ and therefore
\begin{multline*}
a(Q)
=
\mu(Q)^{-1}\,a(Q)\,\mu(Q)
\le
\|a\|_{D_1(\mu)}\,\mu(Q)^{-1}\,a(R)\,\mu(R)
\\
\le
\|a\|_{D_1(\mu)}\,\mu(Q)^{-1}\,a(R)\,\mu(8\,Q)
\le
\|a\|_{D_1(\mu)}\,C_\mu a(R).
\end{multline*}
All these together yield that if $\mu$ is a doubling Borel measure, then for every $1\le r\le  s<\infty$ we have
$$
D_\infty \subset D_s(\mu)\subset D_r(\mu)\subset D_1(\mu)\subset D_0.
$$

\section{Main results}\label{section:main-result}

In what follows $\mathcal{F}$ is a given family of functions in $L^{p_0}_{\rm loc}(\re^n)$ (for a more general family $\mathcal{F}$ see Remark \ref{remark:distributions} below). We now state our main result.

\begin{theorem}\label{theorem:Lp}
Fix $1\le p_0< q<q_0 \le \infty$ and $\mathbb{B}:=(B_Q)_{Q\in \Qcal}$ in $O(p_0,q_0)$.
Given $f\in \mathcal{F}$ and a functional $a$, let us assume that
for every $k\geq 0$ and every cube $Q$,
\begin{equation}\label{hyp:BQ:2k-Q}
\left(\aver{2^k Q} |B_Q f|^{p_0} \right)^{1/p_0} \leq a(2^k Q).
\end{equation}
There exist fast decay coefficients $\{\tilde{\gamma}_k\}_{k\ge 1}$  such that if we define
\begin{equation} \label{eq:tilde}
\tilde{a}(Q)
=
\sum_{k=1} ^\infty \tilde{\gamma}_k\,a(2^k\,Q)
\end{equation}
and $\tilde{a}\in D_q$, then for every cube $Q$,
\begin{equation}\label{conclusion:theor-Lp}
\|B_Q f\|_{L^{q,\infty}, Q}
\lesssim
\tilde{a}(2\,Q).
\end{equation}
\end{theorem}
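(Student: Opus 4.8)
The plan is to adapt the Calder\'on--Zygmund plus good-$\lambda$ machinery of \cite{Franchi-Perez-Wheeden}, but with the new ingredient of exploiting the off-diagonal decay of the compositions $B_R A_Q$ to control the local error terms. Fix a cube $Q$. First I would normalize: set $\lambda_0$ proportional to $\tilde a(2Q)$ (with a large geometric constant $\kappa$ to be chosen) and show that $|\{x\in Q: M_{p_0}(B_Q f\,\chi_{4Q})(x)>\lambda\}|$ decays appropriately for $\lambda\ge\lambda_0$. To that end, perform a local Calder\'on--Zygmund (dyadic Whitney) stopping-time decomposition of $Q$ at height $\lambda$ relative to $(|B_Q f|\,\chi_{4Q})^{p_0}$, obtaining pairwise disjoint stopping cubes $\{Q_i\}$ contained in (a fixed dilate of) $Q$, with $\aver{Q_i}|B_Q f|^{p_0}\sim\lambda^{p_0}$ and $\sum_i|Q_i|\lesssim\lambda^{-p_0}\int_{4Q}|B_Q f|^{p_0}\lesssim\lambda^{-p_0}|Q|\,a(2Q)^{p_0}$.

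The heart of the matter is to run a good-$\lambda$ inequality: I want to show that on each stopping cube $Q_i$, the function $B_Q f$ can, up to a controlled error, be replaced by $B_{Q_i} f$, whose average over $Q_i$ is in turn governed by $a(Q_i)$ via hypothesis \eqref{hyp:BQ:2k-Q}. The algebraic identity to use is
\begin{equation*}
B_Q f = B_{Q_i} f + \big(B_Q - B_{Q_i}\big) f = B_{Q_i} f + A_{Q_i} f - A_Q f,
\end{equation*}
but since $A_Q f$ is not localized I instead write, using $A_{Q_i}=I-B_{Q_i}$ and the commutation property (a),
\begin{equation*}
B_Q f = B_{Q_i} f + A_{Q_i}\big(B_Q f\big)
\end{equation*}
on the nose, because $A_{Q_i}B_Q = (I-B_{Q_i})B_Q = B_Q - B_{Q_i}B_Q = B_Q - B_Q B_{Q_i}$, so that $B_{Q_i}f + A_{Q_i}B_Q f = B_{Q_i}f + B_Q f - B_Q B_{Q_i} f = B_Q f + B_{Q_i}(f - B_Q f) = B_Q f + B_{Q_i}A_Q f$; hence the correct decomposition is $B_Q f = A_{Q_i}(B_Q f) + B_{Q_i}A_Q f$. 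Now split $B_Q f$ inside $A_{Q_i}$ and inside $B_{Q_i}A_Q$ into its restriction to $4Q_i$ and to the complementary annuli $2^{k+1}Q_i\setminus 2^k Q_i$. For the local piece of the first term I use the $L^{p_0}$--$L^{q_0}$ estimate \eqref{AQ:on:p-q} for $A_{Q_i}$ together with the stopping-time bound $\aver{4Q_i}|B_Q f|^{p_0}\lesssim\lambda^{p_0}$; for the annular pieces I use \eqref{AQ:off:p-q} and estimate $\aver{2^{k+1}Q_i}|B_Q f|^{p_0}$ by $a(2^{k+1}Q_i)^{p_0}$ via \eqref{hyp:BQ:2k-Q} (note $2^{k+1}Q_i$ need not sit inside $Q$, which is why the hypothesis is stated for all dilates $2^kQ$). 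For the second term $B_{Q_i}A_Q f$ with $Q_i\subset Q$, the key estimate \eqref{BR-AQ:off:p-q} gives directly $(\aver{2Q_i}|B_{Q_i}A_Q f|^{q_0})^{1/q_0}\lesssim\sum_{k}\beta_{k+1}(\aver{2^{k+1}Q}|f|^{p_0})^{1/p_0}$, and I bound the right-hand side by $a$-quantities (using that $f = B_Q f - A_Q f + \dots$, or more directly by absorbing into $\tilde a$). Summing the geometric series of the fast-decay coefficients $\alpha_k,\beta_k$ against the $a(2^kQ_i)$'s, and using that $\tilde a$ as defined in \eqref{eq:tilde} dominates these sums, yields $(\aver{2Q_i}|B_Q f|^{q_0})^{1/q_0}\lesssim \lambda + \tilde a(2Q_i) + \tilde a(CQ)$.

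With this local $L^{q_0}$ bound in hand (and $q<q_0$), Chebyshev on each $Q_i$ gives $|\{x\in Q_i: M_{p_0}(B_Q f\,\chi_{4Q})>N\lambda\}\cap Q_i|\le \varepsilon\,|Q_i|$ once $N$ is large, plus an extra term controlled by $\sum_i\tilde a(2Q_i)^q|Q_i|\le \|\tilde a\|_{D_q}^q\,\tilde a(CQ)^q|Q|$ thanks to $\tilde a\in D_q$ — this is exactly where the $D_q$ hypothesis enters, and it is the $D_q$ summation that forces $q<q_0$ to be the exponent in the conclusion. Iterating the resulting good-$\lambda$ inequality $h(N\lambda)\le\varepsilon\,h(\lambda)+ (\text{bounded})$ in the standard Burkholder--Gundy fashion gives $\sup_{\lambda>0}\lambda^q\,|\{x\in Q: M_{p_0}(B_Q f\chi_{4Q})>\lambda\}|/|Q|\lesssim\tilde a(2Q)^q$, which since $|B_Q f|\le M_{p_0}(B_Q f\chi_{4Q})$ a.e.\ on $Q$ is precisely \eqref{conclusion:theor-Lp}. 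The main obstacle I anticipate is bookkeeping the two families of annular tails (from $A_{Q_i}$ acting on the far part of $B_Q f$, and from $B_{Q_i}A_Q$ acting on $f$) so that both collapse into the single fast-decaying series defining $\tilde a$; in particular one must check that the hypothesis \eqref{hyp:BQ:2k-Q}, valid on all dilates $2^kQ$, suffices to estimate averages of $B_Q f$ over dilates of the stopping cubes $Q_i$ that stick out of $Q$ — this requires comparing $2^{k}Q_i$ with an appropriate dilate of $Q$ and invoking \eqref{hyp:BQ:2k-Q} for $Q$ itself, absorbing the loss into the decay coefficients $\tilde\gamma_k$.
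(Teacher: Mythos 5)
Your high-level strategy---Whitney/good-$\lambda$ decomposition at levels $\lambda\gtrsim\tilde a(2Q)$, replacing $B_Q f$ locally by $B_{Q_i}f$ on each stopping cube, feeding the off-diagonal decay of $A_{Q_i}$ and of $B_{Q_i}A_Q$ into the geometric sums defining $\tilde a$, then summing over $Q_i$ with $\tilde a\in D_q$---is exactly the scaffolding the paper uses. But there is a real gap in the middle, and it is precisely the one the paper spends Section \ref{sec:proof} circumventing.

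First, the identity you land on, ``$B_Q f = A_{Q_i}(B_Q f)+B_{Q_i}A_Q f$'', is false: $A_{Q_i}B_Q+B_{Q_i}A_Q = (I-B_{Q_i})B_Q + B_{Q_i}(I-B_Q)= B_Q+B_{Q_i}-2B_{Q_i}B_Q$. The correct unsquared identity is $B_{Q_i}-B_Q = B_{Q_i}A_Q - A_{Q_i}B_Q$, equivalently $B_Q f = B_{Q_i}f + A_{Q_i}B_Q f - B_{Q_i}A_Q f$. This is a fixable slip, but it exposes the essential problem: you must estimate the term $B_{Q_i}A_Q f$, and condition (d) only gives $\big(\aver{2Q_i}|B_{Q_i}A_Q f|^{q_0}\big)^{1/q_0}\lesssim\sum_k\beta_{k+1}\big(\aver{2^{k+1}Q}|f|^{p_0}\big)^{1/p_0}$, which involves averages of $|f|$, not of $|B_Qf|$. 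The hypothesis \eqref{hyp:BQ:2k-Q} gives no control whatsoever over $\aver{2^{k+1}Q}|f|^{p_0}$ (already in the classical case $A_Qf=f_Q\chi_{2Q}$, bounded oscillation says nothing about $\aver{Q}|f|$). Your parenthetical remedy---``bound the right-hand side by $a$-quantities using $f=B_Qf-A_Qf+\dots$''---does not help: $f=B_Qf+A_Qf$, and $\aver{2^{k+1}Q}|A_Qf|^{p_0}$ is also uncontrolled. The argument as sketched stops here.

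The paper's remedy is the reduction to the \emph{squared} oscillation $B_Q^2$: Lemma~\ref{lemma:BQ2-reduction} shows (via $B_Q^2=B_Q-A_QB_Q$ and \eqref{AQ:off:p-q:j=1}) that it suffices to prove the weak-$L^q$ bound for $B_Q^2f$, and Lemma~\ref{lemma:BQ2-hyp} upgrades \eqref{hyp:BQ:2k-Q} to $B_Q^2$. One then decomposes $B_{Q_i}^2-B_Q^2 = B_{Q_i}^2(I-B_Q^2)-(I-B_{Q_i}^2)B_Q^2$, and by commutativity the first term factors as $(B_{Q_i}A_Q)(2I-A_Q)B_{Q_i}$: now (d) is applied with the input $(2I-A_Q)B_{Q_i}f$, whose averages over dilates of $Q$ \emph{are} controlled by $a(\cdot)$ through \eqref{hyp:BQ:2k-Q} (for the cube $Q_i$) and \eqref{AQ:off:p-p}; the second term is handled by (c) alone applied to $B_Q^2 f$. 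This is why the paper works with $B_Q^2$ rather than $B_Q$---the authors flag exactly this in Section~\ref{subsec:comm}: the offending first term vanishes only when a strong localization replaces commutativity (Theorem~\ref{theorem:Lp-alternative}), and otherwise you must square. Your proposal would need to insert this reduction before the good-$\lambda$ step to close.

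One more bookkeeping point that would matter once this is repaired: the hypothesis \eqref{hyp:BQ:2k-Q} is stated for a \emph{fixed} cube and its dilates, but the local argument needs it for the stopping cubes $Q_i$ and their dilates. This is legitimate because \eqref{hyp:BQ:2k-Q} is assumed for every cube, not only $Q$; you should be explicit about this, and then compare dilates $2^kQ_i$ to dilates of $Q$ via the scale index $k_i$ with $2^{k_i}\ell(Q_i)\le\ell(Q)<2^{k_i+1}\ell(Q_i)$, which also requires the observation $\tilde a\in D_q\subset D_0$.
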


\bigskip

\begin{remark}
When the operators $B_Q=B_{\ell(Q)}$ are given by semigroups or approximations of the identity admitting an integral representation with kernels that have enough pointwise decay we recover the results in \cite{JM}. This case corresponds to $p_0=1$ and $q_0=\infty$.
\end{remark}

\begin{remark} \label{rem:main-doubling}
Let us observe that if $a$ is doubling (and the sequences in $O(p_0,q_0)$ decay fast enough), it then suffices to assume that $a\in D_q$ (in place of $\tilde{a}\in D_q$) and we conclude that
$$
{\|B_Q f\|}_{L^{q,\infty}, Q} \lesssim a(Q).
$$
This follows from the general result since we easily obtain $a\approx \tilde{a}$.
\end{remark}

\begin{remark} \label{remark:side-Q}
When the operators $B_Q=B_{\ell(Q)}$ only depend on the sidelength of the cube and one further assumes $a\in D_{p_0}$, then \eqref{hyp:BQ:2k-Q} (with a constant in front of $a(2^k\,Q)$) is a direct consequence of the case $k=0$  and $a\in D_{p_0}$. To see this, we decompose $2^kQ$ as a union of disjoint cubes with sidelength $Q$, apply the case $k=0$ and use the condition $a\in D_{p_0}$ to sum over those cubes (see \cite[Lemma 5.1]{JM} for a detailed proof).
\end{remark}

\begin{corollary}\label{corollary:Lp-strong}
Under the  assumptions of Theorem \ref{theorem:Lp}, Kolmogorov's inequality yields the following strong-type inequalities: for any $r\in[p_0,q)$ we have
\begin{equation}\label{Kolmo-conclusion}
{\|B_Q f\|}_{L^{r}, Q} \lesssim \sum_{k=2} ^\infty \gamma_k\,a(2^k\,Q)
\end{equation}
with some fast decay coefficients $(\gamma_k)_k$.
\end{corollary}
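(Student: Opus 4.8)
The plan is to pass from the weak-type bound \eqref{conclusion:theor-Lp} in Theorem \ref{theorem:Lp} to the strong-type bound \eqref{Kolmo-conclusion} by means of Kolmogorov's inequality, which states that for a measurable function $g$ on a finite measure space $(\Omega,\nu)$ and $0<r<s<\infty$ one has $\|g\|_{L^r(\Omega,d\nu/\nu(\Omega))}\lesssim_{r,s}\|g\|_{L^{s,\infty}(\Omega,d\nu/\nu(\Omega))}$. Applying this on $\Omega=Q$ with $\nu=dx$, $s=q$, and the given $r\in[p_0,q)$, one gets from \eqref{conclusion:theor-Lp} that
\begin{equation*}
\|B_Q f\|_{L^{r},Q}\lesssim \|B_Q f\|_{L^{q,\infty},Q}\lesssim \tilde a(2\,Q).
\end{equation*}
It then remains only to unwind the definition of $\tilde a$ in \eqref{eq:tilde}: since $\tilde a(2\,Q)=\sum_{k\ge1}\tilde\gamma_k\,a(2^{k+1}\,Q)$, one reindexes $j=k+1$ to obtain $\tilde a(2\,Q)=\sum_{j\ge2}\tilde\gamma_{j-1}\,a(2^{j}\,Q)$, and sets $\gamma_j:=\tilde\gamma_{j-1}$ for $j\ge 2$. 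The sequence $(\gamma_j)_j$ is a fast decay sequence because $(\tilde\gamma_k)_k$ is, so this gives precisely \eqref{Kolmo-conclusion}.

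The only point that requires a small comment is the applicability of Kolmogorov's inequality, i.e.\ that $B_Q f$ is genuinely in $L^{q,\infty}(Q,dx/|Q|)$ with the stated quasi-norm controlled by the right-hand side; but this is exactly the content of \eqref{conclusion:theor-Lp}, so there is no real obstacle here. (If one wishes to be careful about the lower endpoint, note that one uses $r\ge p_0$ so that the hypotheses ensuring $B_Q f\in L^{p_0}_{\rm loc}$ and the estimate \eqref{conclusion:theor-Lp} are in force; Kolmogorov only needs $r<q$.) Thus the corollary is a direct and essentially formal consequence of Theorem \ref{theorem:Lp}, and I expect no substantive difficulty: the main (and only) step is invoking Kolmogorov's inequality, with the harmless bookkeeping of reindexing the fast decay coefficients.
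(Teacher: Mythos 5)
Your proof is correct and coincides with the intended argument: the paper does not write out a separate proof of Corollary~\ref{corollary:Lp-strong}, since (as the statement itself signals) it is exactly Kolmogorov's inequality applied to the weak-type estimate \eqref{conclusion:theor-Lp}, followed by the harmless reindexing $\tilde a(2\,Q)=\sum_{j\ge 2}\tilde\gamma_{j-1}a(2^j Q)$ that you carry out. Nothing further is required.
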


In some applications one cannot check that $\tilde{a}\in D_{q}$: the overlap of the cubes $2^k\,Q_i$ introduces some growing coefficients that lead to show that one has a $D_q$ condition with a new functional $\bar{a}$ on the right hand side, where
$$
\bar{a}(Q)
=
\sum_{k=2} ^\infty \bar{\gamma}_k\,a(2^k\,Q),
$$
and $\bar{\gamma}_k\gg \tilde{\gamma}_k$. In this way, one can modify the proof and obtain a similar conclusion with a worse sequence on the right hand side. The following result contains this as a particular case:

\begin{theorem}\label{theorem:Lp:two-functionals}
Under the assumptions of Theorem \ref{theorem:Lp} assume that there is another functional $\bar{a}$ such that the pair $(\tilde{a},\bar{a})\in D_q$: there exists $C_{(\tilde{a},\bar{a})}\ge 1$ such that for each cube $Q$ and any family $\{Q_i\}_i\subset Q$ of pairwise disjoint cubes,
\begin{equation}\label{eq:DDq}
\sum_{i} \tilde{a}(Q_i)^q\, |Q_i|\leq C_{(\tilde{a},\bar{a})}^q\, \bar{a}(Q)^q\, |Q|.
\end{equation}
Then for every cube $Q$,
\begin{equation}\label{conclusion:theor-Lp:two-functionals}
\|B_Q f\|_{L^{q,\infty}, Q}
\lesssim
\bar{a}(2\,Q).
\end{equation}
\end{theorem}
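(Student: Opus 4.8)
The plan is to observe that this statement is not really a new result but a bookkeeping variant of Theorem~\ref{theorem:Lp}: one reruns the proof of that theorem and replaces its \emph{unique} appeal to the hypothesis ``$\tilde a\in D_q$'' by the pair condition \eqref{eq:DDq}, which then forces $\bar a$ into the conclusion and changes nothing else. In particular, the choice $\bar a=\tilde a$ reduces \eqref{eq:DDq} to $\tilde a\in D_q$ and recovers Theorem~\ref{theorem:Lp}, a useful consistency check for the argument.

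Concretely, I would first recall where $\tilde a\in D_q$ enters the proof of Theorem~\ref{theorem:Lp} (the one carried out in Section~\ref{sec:proof}), which follows the Calder\'on--Zygmund plus good-$\lambda$ scheme of Burkholder--Gundy \cite{BG} as in \cite{Franchi-Perez-Wheeden,JM,BJM}, now in the $O(p_0,q_0)$ framework. One fixes a cube $Q_0$ and, for each $\lambda$ above a threshold comparable to $\tilde a(2Q_0)$, performs a Calder\'on--Zygmund decomposition of a suitable local $L^{p_0}$-average of $B_{Q_0}f$ on $2Q_0$ at height $\lambda$, producing a pairwise disjoint family $\{Q_i\}_i\subset 2Q_0$ of stopping cubes outside of which $B_{Q_0}f$ is controlled by $\lambda$. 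On each $Q_i$ one decomposes $B_{Q_0}f$ using that the $B_Q$ commute (condition~(a)): a genuine oscillation piece built from $B_{Q_i}f$ (controlled on the dilates $2^kQ_i$ by $a(2^kQ_i)$ via \eqref{hyp:BQ:2k-Q}), a cancellation piece handled through the off-diagonal estimate \eqref{BR-AQ:off:p-q} for $B_{Q_i}A_{Q_0}$ with $R=Q_i\subset Q_0=Q$ together with \eqref{AQ:on:p-q}, \eqref{AQ:off:p-q}, \eqref{AQ:off:p-q:j=1}, \eqref{AQ:off:p-p}, and a remaining error term controlled by the stopping level and absorbed in the usual way; the fast decay of the sequences $\alpha_k,\beta_k$ and the very definition \eqref{eq:tilde} of the $\tilde\gamma_k$ are exactly what is needed to repackage all the tails into multiples of $\tilde a(Q_i)$ and $\tilde a(2Q_0)$. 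Combining these estimates with the stopping property, for $\lambda$ above the threshold one gets $\lambda\lesssim\tilde a(Q_i)$ on each $Q_i$, hence $|Q_i|\lesssim\lambda^{-q}\,\tilde a(Q_i)^{q}\,|Q_i|$, and since $\{x\in Q_0:|B_{Q_0}f(x)|>\lambda\}\subset\bigcup_i Q_i$,
\[
\lambda^{q}\,\big|\{x\in Q_0:|B_{Q_0}f(x)|>\lambda\}\big|\;\lesssim\;\sum_i\tilde a(Q_i)^{q}\,|Q_i|.
\]
It is precisely at this point --- and, one should check, only here --- that $\tilde a\in D_q$ is used: applied with the cube $2Q_0$ and the disjoint family $\{Q_i\}$ it bounds the right-hand side by $C\,\tilde a(2Q_0)^{q}\,|2Q_0|$, and taking the supremum over $\lambda$ (the sub-threshold range being trivial, since the threshold is $\lesssim\tilde a(2Q_0)$) yields \eqref{conclusion:theor-Lp}.

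To prove Theorem~\ref{theorem:Lp:two-functionals} I would copy this argument verbatim and modify only the displayed step: apply \eqref{eq:DDq} in place of $\tilde a\in D_q$ to obtain $\sum_i\tilde a(Q_i)^{q}\,|Q_i|\le C_{(\tilde a,\bar a)}^{q}\,\bar a(2Q_0)^{q}\,|2Q_0|$. Since $\bar a$ is produced only at this last step and is never fed back into the operators $B_Q$, into the off-diagonal estimates of Definition~\ref{def:Off}, into the hypothesis \eqref{hyp:BQ:2k-Q}, or into the choice of the stopping threshold (which still involves only $\tilde a$, and $\tilde a(2Q_0)\le C_{(\tilde a,\bar a)}\bar a(2Q_0)$ anyway, by applying \eqref{eq:DDq} to the one-element family $\{2Q_0\}$), the rest of the proof is unchanged and we arrive at \eqref{conclusion:theor-Lp:two-functionals}. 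The only real task is therefore a bookkeeping one, and it is also the main potential obstacle: confirming that in the proof of Theorem~\ref{theorem:Lp} the hypothesis $\tilde a\in D_q$ is genuinely invoked only in the summation over the stopping family, and that the treatment of the tails and of the absorbable error term relies exclusively on the fast decay of the sequences $\alpha_k,\beta_k,\tilde\gamma_k$ and on \eqref{hyp:BQ:2k-Q}, with no hidden use of monotonicity or doubling of $\tilde a$ itself. Once this is verified, the substitution is routine and no hypothesis on $\bar a$ beyond \eqref{eq:DDq} is needed.
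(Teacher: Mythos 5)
Your overall plan is exactly the paper's: rerun the proof of Theorem~\ref{theorem:Lp} and swap the appeal to $\tilde a\in D_q$ for the pair condition \eqref{eq:DDq}, noting in addition that \eqref{eq:DDq} applied to a singleton family gives $\tilde a(Q)\lesssim\bar a(Q)$, which cleans up all the places where $\tilde a(2Q)$ needs to become $\bar a(2Q)$ (the threshold case, the $A_QB_Q$ term in Lemma~\ref{lemma:BQ2-reduction}, and $I_2$ in Proposition~\ref{prop:II}). So the approach and the main mechanism are right.

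However, the verification step you flag as the ``main potential obstacle'' does not come out as ``only here.'' In the proof of Theorem~\ref{theorem:Lp}, $\tilde a\in D_q$ is used a \emph{second} time, inside Proposition~\ref{prop:II}: at \eqref{use-D0} one passes from $\tilde a(2^{k_i}Q_i^t)$ to $\tilde a(2Q)$ by invoking the local quasi-monotonicity $\tilde a\in D_0$, which is exactly one of the hidden uses of monotonicity of $\tilde a$ you were worried about, and which follows from $\tilde a\in D_q$ but need not hold under \eqref{eq:DDq} alone (the paper itself underlines this in Remark~\ref{remark:sequences:Dq}). The fix, which the paper carries out, is in the same spirit as your singleton observation: since $2^{k_i}Q_i^t\subset 2Q$ with $|2^{k_i}Q_i^t|\approx|2Q|$ by \eqref{lado-i}--\eqref{Q_it-Q}, apply \eqref{eq:DDq} to the one-element family $\{2^{k_i}Q_i^t\}\subset 2Q$ to get $\tilde a(2^{k_i}Q_i^t)\lesssim\bar a(2Q)$ directly. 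You should make this second substitution explicit; once you do, the argument closes as the paper's does.

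A more minor point: the actual proof of Theorem~\ref{theorem:Lp} in Section~\ref{sec:proof} proceeds via a good-$\lambda$ inequality for $M_{p_0}(|B_Q^2 f|\chi_{2Q})$ over a Whitney decomposition of the superlevel set, after reducing from $B_Q$ to $B_Q^2$ by Lemma~\ref{lemma:BQ2-reduction}; your description in terms of a Calder\'on--Zygmund stopping decomposition of $B_{Q_0}f$ itself is in the right spirit but does not match the actual mechanism, and in particular the $B_Q\to B_Q^2$ reduction (and its own use of $\tilde a(2Q)\lesssim\bar a(2Q)$) is a step worth mentioning.
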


\subsection{Some improvements}

\subsubsection{About the commutative assumption} \label{subsec:comm}

In Definition \ref{def:Off} we require that the operators $A_Q$ (or $B_Q$) commute. From the proof is follows that the commutativity is only used in Proposition \ref{prop:II}. More precisely,
it is easy to see (details are left to the interested reader) that it suffices to have that  $B_R$ commutes with $B_R A_Q$ and $B_RA_Q^2$ for all cubes $R\subset Q$. Then $(a)$ in Definition \ref{def:Off} could be replaced by this weaker (but less useful) condition and one still gets the same conclusion.

\medskip

Another possible way to avoid $(a)$ is to assume that $A_{R}A_Qf(x)=A_Qf(x)$ for $x\in 2\,R$ and for every $R\subset Q$. In that case, we do not need to assume neither $(a)$ nor $(d)$. Furthermore, in that situation the proof and formulation can be simplified in such a way that we can work with $B_Q$ in place of $B_Q^2$ ---in passing we notice that we have needed to carry out our proof with $B_Q^2$ precisely because of the first term in \eqref{eqn:Bqi-BQ:square} which is going to vanish in the proof of the following result.

\begin{theorem}\label{theorem:Lp-alternative}
Fix $1\le p_0< q<q_0 \le \infty$ and let us assume that $\mathbb{B}:=(B_Q)_{Q\in \Qcal}$  satisfies $(b)$ and $(c)$ in Definition \ref{def:Off}. Assume further than
\begin{equation}\label{replace-comm}
A_{R}A_Qf(x)=A_Qf(x),
\qquad x\in 2\,R,
\end{equation}
for all $R\subset Q$.
Given $f\in \mathcal{F}$ and a functional $a$, let us assume that \eqref{hyp:BQ:2k-Q} holds. If $a\in D_q$ then for every cube $Q$, we have
\begin{equation}\label{conclusion:theor-Lp-alternative}
\|B_Q f\|_{L^{q,\infty}, Q}
\lesssim
\sum_{k\ge 1} \eta_k\,a(2^k\,Q)
\end{equation}
with $\eta_1=1$ and $\eta_k=\alpha_k\,2^{k\,n/p_0}$ for $k\ge 2$.
\end{theorem}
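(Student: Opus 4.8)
The plan is to follow the Calder\'on--Zygmund / good-$\lambda$ machinery that underlies Theorem \ref{theorem:Lp}, but to exploit the localization hypothesis \eqref{replace-comm} to simplify matters: under \eqref{replace-comm} one can work directly with $B_Q$ rather than $B_Q^2$, and the term responsible for needing $B_Q^2$ (the first term in the would-be analogue of \eqref{eqn:Bqi-BQ:square}) simply vanishes. Fix a cube $Q$. Since we want to bound $\|B_Q f\|_{L^{q,\infty},Q}$, normalize so that $\tilde a$-type quantities are controlled and reduce, by a standard limiting argument, to showing a uniform bound for the distribution function $|\{x\in Q: |B_Qf(x)|>\lambda\}|$ with the right power of $\lambda$. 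The natural approach is a stopping-time (Calder\'on--Zygmund) decomposition of $B_Qf$ at height $\lambda$ on $Q$, producing a pairwise disjoint family $\{Q_i\}_i\subset Q$ of maximal dyadic subcubes where the average of $|B_Qf|^{p_0}$ exceeds $\lambda^{p_0}$; on the complement $|B_Qf|\le\lambda$ a.e.

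The key algebraic step is the decomposition of $B_Qf$ relative to a selected cube $Q_i$. Write, for $x\in Q_i$ (or on $2Q_i$),
\begin{equation*}
B_Q f = B_{Q_i} f + (A_{Q_i} - A_Q) f = B_{Q_i} f + A_{Q_i}(I - A_Q) f + (A_{Q_i} A_Q - A_Q) f .
\end{equation*}
Here the last term is $A_{Q_i}A_Qf - A_Qf$, which by \eqref{replace-comm} vanishes identically on $2Q_i$ (this is exactly where the hypothesis is used, and why no $B_Q^2$ is needed). Thus on $Q_i$ we have simply $B_Q f = B_{Q_i} f + A_{Q_i} B_Q f$. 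The first piece $B_{Q_i}f$ is handled by the stopping-time selection and hypothesis \eqref{hyp:BQ:2k-Q}: on $Q_i$, $\aver{Q_i}|B_{Q_i}f|^{p_0}\le a(Q_i)^{p_0}$, and the maximality of $Q_i$ gives $\aver{Q_i}|B_Qf|^{p_0}\lesssim\lambda^{p_0}$, so this term's contribution to the level set is summed using the packing condition $a\in D_q$ applied to $\{Q_i\}\subset Q$. The second piece $A_{Q_i}B_Qf$ is estimated by the $L^{p_0}$--$L^{q_0}$ off-diagonal bounds \eqref{AQ:on:p-q}--\eqref{AQ:off:p-q} for the operators $A_Q$ at scale $Q_i$: decomposing $B_Qf$ into the annuli $4Q_i$, $2^{j+1}Q_i\setminus 2^jQ_i$, one bounds $\big(\aver{2Q_i}|A_{Q_i}B_Qf|^{q_0}\big)^{1/q_0}$ by $\sum_{k\ge2}\alpha_k\big(\aver{2^kQ_i}|B_Qf|^{p_0}\big)^{1/p_0}$. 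For the innermost annuli the average of $|B_Qf|^{p_0}$ is $\lesssim\lambda^{p_0}$ (by stopping-time maximality, after controlling overlaps), while for $2^kQ_i$ that sticks out of $Q$ one splits off the part inside $Q$ and uses \eqref{hyp:BQ:2k-Q} with $a(2^kQ)$ for the part that is comparable to $Q$ — this is precisely what produces the right-hand side $\sum_k\eta_k a(2^kQ)$ with $\eta_k=\alpha_k 2^{kn/p_0}$, the factor $2^{kn/p_0}$ arising from the $|2^kQ_i|/|Q|$ volume ratio lost when one can no longer localize inside $Q$.

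Assembling: the good-$\lambda$ / Chebyshev argument gives $|\{x\in Q:|B_Qf|>\lambda\}|\lesssim\sum_i|Q_i|$, and each $Q_i$ contributes via $\lambda^q|Q_i|\lesssim a(Q_i)^q|Q_i|$ plus the off-diagonal tail; summing over $i$ with $a\in D_q$ yields $\sum_i a(Q_i)^q|Q_i|\lesssim a(Q)^q|Q|$, and the tail terms sum (using fast decay of $\{\alpha_k\}$ and a change of the summation order over the annuli) to $\big(\sum_{k\ge1}\eta_k a(2^kQ)\big)^q|Q|$. Dividing by $|Q|$ and taking the supremum over $\lambda$ gives \eqref{conclusion:theor-Lp-alternative}. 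The main obstacle I anticipate is the careful bookkeeping of the off-diagonal tails: controlling the overlap of the dilated stopping cubes $\{2^kQ_i\}_i$ inside $Q$ (so that their $p_0$-averages of $|B_Qf|^{p_0}$ remain $\lesssim\lambda^{p_0}$ on aggregate), and correctly separating the ``inside $Q$'' part (handled by \eqref{hyp:BQ:2k-Q}, contributing $a(2^jQ)$) from the part forcing a volume loss, so that the final sequence $\eta_k=\alpha_k2^{kn/p_0}$ comes out exactly as claimed and still has fast decay provided $\{\alpha_k\}$ decays fast enough. A secondary technical point is justifying the stopping-time decomposition and the a.e.\ identity \eqref{replace-comm} in the quasi-Banach $L^{q,\infty}$ setting, which is routine given the remarks on well-definedness in Definition \ref{def:Off}.
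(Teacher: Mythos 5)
Your proposal correctly isolates the two things that make this theorem work: the algebraic identity
\begin{equation*}
B_Qf(x) = B_{Q_i}f(x) + A_{Q_i}B_Qf(x), \qquad x\in 2Q_i,
\end{equation*}
obtained because $(A_{Q_i}A_Q-A_Q)f(x)=0$ on $2Q_i$ by \eqref{replace-comm} (this is what kills the term which in the general case forces one to work with $B_Q^2$), and the provenance of the coefficients $\eta_k=\alpha_k 2^{kn/p_0}$ as a volume ratio $(\ell(Q)/\ell(Q_i))^{n/p_0}\le 2^{kn/p_0}$ appearing when one replaces the average over $2^kQ_i$ by an average over $2^kQ$ for the annuli beyond the scale of $Q$. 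This is the paper's argument. The strategy there, as you announce, is a good-$\lambda$ inequality
\begin{equation*}
|\Omega_{st}\cap Q|\leq c\left[(\lambda/s)^{p_0}+s^{-q_0}\right]|\Omega_t\cap Q|+c\left(\frac{c_0\,\tilde a(Q)}{\lambda t}\right)^q|Q|
\end{equation*}
for $\Omega_t=\{M_{p_0}(B_Qf\,\chi_{2Q})>t\}$, iterated in $t$ after absorption; the stopping cubes are the Whitney cubes of $\Omega_t$.

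There is, however, a concrete gap in the scaffolding as you describe it. You run a \emph{localized dyadic Calder\'on--Zygmund decomposition of $Q$} at height $\lambda$, producing maximal dyadic subcubes $Q_i\subset Q$ with $\aver{Q_i}|B_Qf|^{p_0}>\lambda^{p_0}$. The maximality of $Q_i$ with respect to its dyadic parent controls only $\aver{\widehat{Q_i}}|B_Qf|^{p_0}\le\lambda^{p_0}$; it does \emph{not} control $\aver{2^mQ_i}|B_Qf|^{p_0}$ for $m\ge1$ --- the dilates $2^mQ_i$ are concentric with $Q_i$, not its dyadic ancestors, they can stick out of $Q$ (into $2Q\setminus Q$, where there is no stopping information at all) when $Q_i$ is near $\partial Q$, and they can contain strict descendants of other stopping cubes, on which there is no bound. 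But precisely these averages over the ``inner'' annuli $2^mQ_i\subset 2Q$ are what you need to be $\lesssim\lambda^{p_0}$ (or $\lesssim t^{p_0}$) in order to conclude that $\big(\aver{2Q_i}|A_{Q_i}B_Qf|^{q_0}\big)^{1/q_0}\lesssim t+\tilde a(Q)$; see \eqref{Whitney-prop}. The paper circumvents exactly this by using the \emph{global} Hardy--Littlewood maximal function and the Whitney decomposition of $\Omega_t$: the Whitney property $10Q_i^t\cap\Omega_t^c\neq\emptyset$ gives a point where $M_{p_0}G\le t$, and that single observation controls \emph{all} the dilated averages $\aver{2^mQ_i^t}G^{p_0}\le 10^n t^{p_0}$ simultaneously, including those reaching into $2Q\setminus Q$. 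This is the very issue the authors highlight after Corollary \ref{corol:classical-Poincare} when explaining why they do not use the localized dyadic decomposition of \cite{Franchi-Perez-Wheeden} (at the price of getting $a(2Q)$ instead of $a(Q)$ on the right). A related symptom of the same imprecision: a single-pass Chebyshev bound $\lambda^q\sum_i|Q_i|\lesssim(\cdots)^q|Q|$ does not close, because after the off-diagonal estimate the ``small'' part of $A_{Q_i}B_Qf$ is only $\lesssim t\sum_k\alpha_k$, which is comparable to $t$; the absorption genuinely needs the two-parameter $(\lambda,s)$ structure of the good-$\lambda$ inequality (absorbing $(\lambda/s)^{p_0}+s^{-q_0}$ with $s$ large then $\lambda$ small), which you invoke in passing but do not develop. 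With the Whitney decomposition of $\Omega_t$ and that iteration in place of the dyadic CZ on $Q$, your sketch matches the paper.
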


Next we present a version of the previous result in which the operators $A_Q$ are local and therefore in \eqref{hyp:BQ:2k-Q} it suffices to assume just the case $k=1$. As we see below this is well suited for the classical oscillations operators.

\begin{theorem}\label{theorem:Lp-alternative-bis}
Fix $1\le p_0< q<q_0 \le \infty$ and let us assume that $\mathbb{B}:=(B_Q)_{Q\in \Qcal}$  satisfies $(b)$ and $(c)$ in Definition \ref{def:Off}. Assume further than \eqref{replace-comm} holds and that the operators $A_Q$ are ``local'' in the following sense: for any cube $Q$,
\begin{equation}\label{eqn:localization:AQ}
A_Q f= \chi_{2Q} A_Q (f \chi_{2Q}).
\end{equation}
Given $f\in \mathcal{F}$ and a functional $a$, let us assume that for every cube $Q$ the following holds:
\begin{equation} \label{hyp:BQ:2k-Q-bis}
\|B_Q f\|_{L^{p_0},2Q} \leq a(2Q).
\end{equation}
If $a\in D_q$, then for every cube $Q$, we have
\begin{equation}\label{conclusion:theor-Lp-alternative-bis}
\|B_Q f\|_{L^{q,\infty}, Q}
\lesssim a(2Q).
\end{equation}
\end{theorem}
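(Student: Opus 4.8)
The plan is to follow the Calder\'on--Zygmund / good-$\lambda$ scheme that underlies Theorem \ref{theorem:Lp}, but to exploit the two extra hypotheses --- the rigidity relation \eqref{replace-comm} and the locality \eqref{eqn:localization:AQ} --- to remove all the ``tail'' terms and reduce the full chain of nested cubes to a single scale, so that the self-improved sequence $\tilde a$ collapses to $a(2Q)$. Fix a cube $Q$ and set $\lambda_0 = M_{p_0}(\chi_{2Q}B_Qf)(x_Q)$-type threshold, or rather work with the local CZ decomposition of $|B_Qf|^{p_0}$ on $2Q$ at height $\lambda^{p_0}$ with $\lambda \gg a(2Q)$. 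By \eqref{hyp:BQ:2k-Q-bis} such a height is attainable, and one obtains a pairwise disjoint family $\{Q_i\}\subset 2Q$ of dyadic (or Besicovitch) subcubes with the usual properties: $\bigl(\aver{Q_i}|B_Qf|^{p_0}\bigr)^{1/p_0}\approx \lambda$, the bad part is supported on $\bigcup Q_i$, and $|B_Qf|\lesssim \lambda$ a.e.\ off $\bigcup Q_i$.

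The core of the argument is a pointwise/averaged comparison of $B_Qf$ with $B_{Q_i}f$ on each selected cube $Q_i$. Write, for $x\in Q_i$,
\begin{equation*}
B_Qf = B_{Q_i}f + (A_{Q_i}-A_Q)f = B_{Q_i}f + A_{Q_i}B_Qf,
\end{equation*}
so that $B_Qf - B_{Q_i}f = A_{Q_i}B_Qf = A_{Q_i}(I-A_Q)f$. Here is where \eqref{replace-comm} and \eqref{eqn:localization:AQ} do their work: by locality, $A_{Q_i}g = \chi_{2Q_i}A_{Q_i}(g\chi_{2Q_i})$, so only the values of $B_Qf$ on $2Q_i$ matter; and by \eqref{replace-comm}, $A_{Q_i}A_Qf = A_Qf$ on $2Q_i$ whenever $Q_i\subset Q$, which kills the ``second'' cancellation term and leaves essentially $A_{Q_i}f - A_Qf$ controlled by $A_{Q_i}(f\chi_{2Q_i})$ and the single off-diagonal estimate (c). Applying (c) (more precisely \eqref{AQ:off:p-q:j=1}) to $A_{Q_i}$ at its own scale, and using that $2Q_i\subset 2Q$, one gets
\begin{equation*}
\Bigl(\aver{Q_i}|B_Qf - B_{Q_i}f|^{q_0}\Bigr)^{1/q_0}
\lesssim \sum_{k\ge 1}\alpha_{k+1}\,2^{(k+1)n/p_0}\Bigl(\aver{2Q}|B_Qf|^{p_0}\Bigr)^{1/p_0}\wedge(\text{local CZ average})
\lesssim \sum_{k\ge1}\eta_k\,\lambda,
\end{equation*}
the point being that the right-hand side involves only scale-$Q$ data, never cubes larger than $2Q$. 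In particular $\|B_Qf\|_{L^{q_0},Q_i}\lesssim \lambda + \|B_{Q_i}f\|_{L^{q_0},Q_i}$ and, crucially, $\|B_{Q_i}f\|_{L^{p_0},2Q_i}\le a(2Q_i)$ by \eqref{hyp:BQ:2k-Q-bis} applied to the cube $Q_i$, which is where the functional reappears at the lower scale.

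From here one runs the standard good-$\lambda$ iteration. Let $\phi_Q(\lambda) = \frac{1}{|Q|}\bigl|\{x\in Q: |B_Qf|>\lambda\}\bigr|$; the CZ decomposition plus the comparison above gives, for $\lambda$ above a fixed multiple of $a(2Q)$,
\begin{equation*}
\phi_Q(\lambda)\lesssim \frac{1}{\lambda^{p_0}}\,\frac{1}{|Q|}\sum_i \int_{Q_i}|B_Qf|^{p_0}
\lesssim \frac{1}{\lambda^{p_0}}\Bigl(\lambda^{p_0}\sum_i\frac{|Q_i|}{|Q|} \cdot(\text{small})+\frac{1}{|Q|}\sum_i a(2Q_i)^{p_0}|Q_i|\Bigr),
\end{equation*}
and then one upgrades $L^{p_0}$ to $L^{q,\infty}$ on the selected cubes by the $L^{q_0}$ gain, summing the geometric-type series $\sum_i(a(2Q_i)/\lambda)^{q}|Q_i|/|2Q|$ via $a\in D_q$ (applied to the disjoint family $\{2Q_i\}$, after the routine observation that bounded overlap of the $2Q_i$'s inside a fixed dilate of $Q$ is absorbed by the $D_q$ constant, or by passing to a further disjointification). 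Choosing the CZ height and the comparison constant so that the ``small'' factor beats the $D_q$ constant, a standard absorption/iteration on $\sup_\lambda \lambda^q\phi_Q(\lambda)$ closes the estimate and yields \eqref{conclusion:theor-Lp-alternative-bis}.

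The main obstacle I anticipate is bookkeeping the locality correctly so that \emph{no} term ever refers to a cube larger than $2Q$: in Theorem \ref{theorem:Lp} the analogue of the comparison term, coming from $(1-A_{Q_i})A_Q^2$ and its tails, genuinely sees all the dilates $2^kQ$, producing $\tilde a$; here \eqref{replace-comm} must be invoked in exactly the right spot (on $2Q_i$, using $Q_i\subset Q$) to collapse $B_{Q_i}A_Qf$ or $B_{Q_i}A_Q^2f$ and thereby eliminate the need for $B_Q^2$ altogether --- this is the point flagged in the remark before the statement. A secondary technical nuisance is the overlap of the dilated cubes $2Q_i$ and the passage from the $L^{q_0}$ off-diagonal gain to the weak-$L^q$ conclusion when $q_0=\infty$ (replace $L^{q_0}$ averages by essential suprema throughout), but both are handled by now-routine arguments parallel to those in \cite{JM, Franchi-Perez-Wheeden}.
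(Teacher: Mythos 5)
Your proposal follows essentially the same route as the paper's proof: form $G=\chi_{2Q}|B_Qf|$, run a good-$\lambda$ argument on the level sets of $M_{p_0}G$, and on each stopping cube $Q_i$ collapse the comparison term via the identity $B_Qf-B_{Q_i}f=A_{Q_i}B_Qf$ (valid on $2Q_i$ by \eqref{replace-comm}) together with the locality \eqref{eqn:localization:AQ} to write it as $A_{Q_i}(G\chi_{2Q_i})$, then invoke the on-diagonal estimate \eqref{AQ:on:p-q}; the term $I$ / $\Sigma_2$ is then handled, as you say, by $a\in D_q$ applied to the (disjointified) family $\{2Q_i\}$.

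Two small points worth tightening. First, \eqref{replace-comm} requires $R\subset Q$, so you must arrange $Q_i\subset Q$, not merely $Q_i\subset 2Q$; a Calder\'on--Zygmund decomposition of $|B_Qf|^{p_0}$ on $2Q$ does not give this. The paper secures it by taking a Whitney decomposition of $\Omega_t=\{M_{p_0}G>t\}$ in the dyadic grid scaled to $Q$ and retaining only cubes that meet $Q$, which then automatically lie in $Q$ once $t\gtrsim a(2Q)$ (so that $|\Omega_t|<|Q|$). Second, once locality is invoked, the comparison term is controlled by a \emph{single} application of \eqref{AQ:on:p-q}, namely $\bigl(\aver{2Q_i}|A_{Q_i}(G\chi_{2Q_i})|^{q_0}\bigr)^{1/q_0}\le \alpha_2\bigl(\aver{2Q_i}G^{p_0}\bigr)^{1/p_0}\lesssim t$; the sum $\sum_k\alpha_{k+1}2^{(k+1)n/p_0}$ you display never arises, since all tails vanish identically. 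Neither issue changes the strategy, but both need to be reflected in a complete write-up.
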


\begin{remark}\label{remark:local-case}
Let us observe that by the localization property of $A_Q$ \eqref{eqn:localization:AQ}, \eqref{AQ:off:p-q} holds trivially since the left hand side vanishes for every $j\ge 1$. Thus,
$(c)$ in Definition \ref{def:Off} reduces to the on-diagonal term \eqref{AQ:on:p-q} (with $2\,Q$ in place of $4\,Q$ in the right hand side).
\end{remark}

\begin{corollary}\label{corol:classical-Poincare}
Given $f\in L^1_{\rm loc}(\re^n)$ and a functional $a\in D_q$, $1<q<\infty$, assume that for every cube $Q$, we have
\begin{equation}\label{hyp:local-case}
\aver{Q}|f-f_Q|
\le
a(Q).
\end{equation}
Then for every cube $Q$,
$$
\|f-f_Q\|_{L^{q,\infty},Q}
\le
C\,a(2\,Q).
$$

\end{corollary}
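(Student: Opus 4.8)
\textbf{Proof plan for Corollary \ref{corol:classical-Poincare}.}
The plan is to deduce this from Theorem \ref{theorem:Lp-alternative-bis}, so the whole task reduces to verifying that the classical oscillation operators fit into that framework with $p_0=1$ and $q_0=\infty$. First I would take $\mathbb{B}:=(B_Q)_Q$ to be the classical oscillation operators of the first Example, namely $B_Qf=f-(\aver{Q}f\,dx)\,\chi_Q$, so that $A_Qf=(\aver{Q}f\,dx)\,\chi_Q$ is the averaging operator on $Q$. It is immediate that $A_Q$ is ``local'' in the sense of \eqref{eqn:localization:AQ}: in fact $A_Qf$ is supported in $Q\subset 2Q$, and $\chi_{2Q}A_Q(f\chi_{2Q})=A_Qf$ since the average of $f$ over $Q$ only sees $f\chi_{2Q}$. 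Condition $(b)$, the uniform $L^{p_0}=L^1$ boundedness of $B_Q$, follows from the triangle inequality and the trivial bound $\aver{Q}|f|\le |Q|^{-1}\|f\|_{L^1}$ together with the fact that $\chi_Q$ has $L^1$ norm $|Q|$; one gets $\|B_Qf\|_{L^1}\le 2\|f\|_{L^1}$.

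Next I would check $(c)$, the $L^{p_0}-L^{q_0}=L^1-L^\infty$ off-diagonal estimates at the scale $Q$. By Remark \ref{remark:local-case}, the off-diagonal part \eqref{AQ:off:p-q} holds trivially because the left-hand side vanishes for every $j\ge 1$ (the support of $A_Q(f\chi_{\re^n\setminus 2^{j+1}Q})$ is in $Q$, disjoint from $2^jQ$ when... actually $Q\subset 2^jQ$, but $A_Q(f\chi_{\re^n\setminus 2^{j+1}Q})=(\aver{Q}f\chi_{\re^n\setminus 2^{j+1}Q})\chi_Q=0$ since $Q\cap(\re^n\setminus 2^{j+1}Q)=\emptyset$). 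So only the on-diagonal term \eqref{AQ:on:p-q} remains, and in the localized form noted in Remark \ref{remark:local-case} it reads
\begin{equation*}
\esssup_{2Q}\left|A_Q(f\chi_{2Q})\right| \le \alpha_2\,\aver{2Q}|f|\,dx.
\end{equation*}
This holds with $\alpha_2=2^n$ (or any $\alpha_2\ge 2^n$ so that the tail sequence can be chosen with fast decay): indeed $A_Q(f\chi_{2Q})=(\aver{Q}f\chi_{2Q}\,dx)\chi_Q=(\aver{Q}f\,dx)\chi_Q$, and $|\aver{Q}f\,dx|\le\aver{Q}|f|\,dx\le\frac{|2Q|}{|Q|}\aver{2Q}|f|\,dx=2^n\aver{2Q}|f|\,dx$.

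It then remains to verify the hypothesis \eqref{replace-comm}, namely $A_RA_Qf(x)=A_Qf(x)$ for $x\in 2R$ whenever $R\subset Q$. Here $A_Qf=c\,\chi_Q$ with $c=\aver{Q}f\,dx$ a constant, and since $R\subset Q$ we have $\chi_Q\equiv 1$ on $R$, so $A_R(A_Qf)=A_R(c\,\chi_Q)=(\aver{R}c\,\chi_Q\,dx)\chi_R=c\,\chi_R$, which equals $A_Qf=c\chi_Q$ on $R$; and for $x\in 2R\setminus R$ one has $A_R(A_Qf)(x)=0$ but also we only need equality on $R$... Actually \eqref{replace-comm} asks for equality on all of $2R$, so I should note $A_Qf=c\chi_Q$ restricted to $2R$: if $R\subset Q$ then $2R$ need not be inside $Q$, but in the corollary's application one only integrates over $R$ itself; to be safe I would instead invoke that the proof of Theorem \ref{theorem:Lp-alternative-bis} uses \eqref{replace-comm} only through values on $2R_i$ for the cubes $R_i$ arising in a Calderón–Zygmund stopping-time decomposition inside a fixed cube, so the natural route is to observe $A_RA_Qf=A_Qf$ on $R$ and that the constant-times-indicator structure makes the contribution of the annulus $2R\setminus R$ harmless; alternatively, and most cleanly, I would simply cite that this is exactly the setting \cite{Franchi-Perez-Wheeden} and that the localized versions of $(b)$, $(c)$ suffice. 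Finally, once the structural hypotheses are in place, \eqref{hyp:local-case} says precisely $\aver{Q}|f-f_Q|\,dx=\aver{Q}|B_Qf|\,dx\le a(Q)$, and since $a\in D_q$ we may replace $Q$ by $2Q$ in \eqref{hyp:BQ:2k-Q-bis} using that $D_q$ implies $D_0$ (or directly $\|B_Qf\|_{L^1,2Q}\le\aver{2Q}|f-f_{2Q}|\,dx+|f_{2Q}-f_Q|$, each term controlled by $a(2Q)$), so Theorem \ref{theorem:Lp-alternative-bis} applies and yields $\|f-f_Q\|_{L^{q,\infty},Q}\lesssim a(2Q)$, which is the claim. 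The main obstacle is not any hard estimate but the bookkeeping in checking that the slightly restrictive hypothesis \eqref{replace-comm} genuinely covers the non-nested pair $(R,2R)$ versus $Q$; I expect this to be dispatched by the remark already made in the text that in the local case $(c)$ reduces to the on-diagonal term and that the averaging operators are the motivating example of \cite{Franchi-Perez-Wheeden}.
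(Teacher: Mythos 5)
Your reduction to Theorem \ref{theorem:Lp-alternative-bis} is the right idea, and your verifications of $(b)$, $(c)$, the localization property and the passage from \eqref{hyp:local-case} to \eqref{hyp:BQ:2k-Q-bis} are all fine; but the choice $A_Qf=f_Q\,\chi_Q$ creates a genuine gap exactly where you flagged trouble. With that choice, \eqref{replace-comm} genuinely \emph{fails}: for $R\subset Q$ and $x\in (2R)\cap Q\setminus R$ one has $A_Qf(x)=f_Q$ while $A_RA_Qf(x)=f_Q\,(\chi_Q)_R\,\chi_R(x)=0$, so the two sides disagree on part of $2R$. Since $B_{Q_i^t}f-B_Qf=-A_{Q_i^t}B_Qf$ is derived in the proof of Theorem \ref{theorem:Lp-alternative-bis} from \eqref{replace-comm} precisely as a pointwise identity on all of $2Q_i^t$, and the term $II$ is then controlled via an $L^{q_0}$ average over $2Q_i^t$, equality only on $Q_i^t$ is not enough; the annulus is not ``harmless'' without a new argument. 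Appealing to \cite{Franchi-Perez-Wheeden} does not close the gap either, since the whole point of the corollary is to deduce that result as an instance of the present abstract theorem rather than just cite it.

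The paper's fix is a small but essential modification of your ansatz: define
\[
A_Qf(x)=f_Q\,\chi_{2Q}(x),\qquad B_Qf(x)=f(x)-f_Q\,\chi_{2Q}(x),
\]
i.e.\ the indicator is over $2Q$, not over $Q$ (while the average is still over $Q$). With this choice, for $R\subset Q$ one has $2R\subset 2Q$, so for $x\in 2R$ both $A_Qf(x)=f_Q$ and
\[
A_RA_Qf(x)=f_Q\,(\chi_{2Q})_R\,\chi_{2R}(x)=f_Q\cdot 1\cdot 1=f_Q,
\]
which gives \eqref{replace-comm} on all of $2R$. All the hypotheses you already checked ($(b)$, $(c)$ via Remark \ref{remark:local-case}, \eqref{eqn:localization:AQ}, and \eqref{hyp:BQ:2k-Q-bis} by the triangle inequality $\aver{2Q}|f-f_Q|\le \aver{2Q}|f-f_{2Q}|+|f_Q-f_{2Q}|\le(1+2^n)a(2Q)$) continue to hold verbatim, so Theorem \ref{theorem:Lp-alternative-bis} applies and the conclusion follows. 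In short: you had the architecture right; the missing idea is to widen the support of $A_Qf$ to $2Q$ so that the commutation identity holds on the larger set needed in the good-$\lambda$ argument.
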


This result should be compared with those in \cite{Franchi-Perez-Wheeden}. In the general case of the spaces of homogeneous type \cite{Franchi-Perez-Wheeden} obtains $a(C\,Q)$ in the right hand side for some dimensional constant $C>1$. However, in the Euclidean setting with the $\infty$-distance (i.e., where the balls are the cubes) the right hand side is improved to $Q$. This is because they work with the localized dyadic Hardy-Littlewood maximal function and use the corresponding Calder\'on-Zygmund decomposition. Our proof uses the regular Hardy-Littlewood maximal function and the Whitney decomposition, since this is better adapted to operators that are not localized, as those that satisfy the off-diagonal decay in Definition \ref{def:Off}. This explains why we obtain $2\,Q$.

\begin{proof}
Given a cube $Q$ we set
$$
A_Q f(x)=f_Q\,\chi_{2\,Q}(x),
\quad
B_Qf(x)=f(x)-A_Qf(x)=f(x)-f_Q\,\chi_{2\,Q}(x),
$$
where we write $f_Q$ to denote the average of $f$ on $Q$. We take $p_0=1$, $q_0=\infty$. We are going to show that all the required hypotheses in Theorem \ref{theorem:Lp-alternative-bis} hold and then our desired estimate follows from that result.

Note that we trivially obtain $(b)$ in Definition \ref{def:Off}:
$$
\|B_Q f\|_{L^1(\re^n)}
\le
\|f\|_{L^1(\re^n)}+|f_Q|\,|2\,Q|
\le
(1+2^n)\,\|f\|_{L^1(\re^n)}.
$$
On the other hand it is easy to see that $(c)$ also holds, by Remark \ref{remark:local-case} it suffices to show \eqref{AQ:on:p-q}:
$$
\|A_Q (f\,\chi_{4\,Q})\|_{L^\infty(2\,Q)}
=
|f_Q|
\le
\aver{Q}\,|f|\,dx.
$$
Regarding \eqref{replace-comm}, for all $R\subset Q$ and $x\in 2\,R$,
$$
A_{R}A_Qf(x)
=
f_Q\,A_R(\chi_{2Q})(x)
=
f_Q\,(\chi_{2Q})_R
=
f_Q
=
A_Q f(x).
$$
Note also that, by definition, \eqref{eqn:localization:AQ} holds. Finally we see that  \eqref{hyp:local-case} implies \eqref{hyp:BQ:2k-Q-bis}:
\begin{align*}
\aver{2\,Q} |B_Q f(x)|\,dx
& =
\aver{2\,Q} |f(x)-f_Q|\,dx
\le
\aver{2\,Q} |f(x)-f_{2\,Q}|\,dx
+|f_Q-f_{2\,Q}|
\\
& \le
a(2\,Q)+
2^n\,\aver{2Q} |f(x)-f_{2\,Q}|\,dx
\le
(1+2^n)\,a(2\,Q).
\end{align*}
\end{proof}

\subsubsection{Weighted estimates}\label{section:weights}

A weight $w$ is a non-negative locally integrable function.
For any measurable set $E$, we write $w(E)= \int_E w(x)\,dx$ and
$$
\aver{Q} f\,dw
=
\aver{Q} f(x)\,dw(x)
=
\frac1{w(Q)}\,\int_{Q} f(x)\,w(x)\,dx.
$$
We use the following notation:
$$
\|f\|_{L^p(w),Q}=\|f\|_{L^p (Q, \frac{w\,dx}{w(Q)})}
\quad \textrm{and}
\quad
\|f\|_{L^{p,\infty}(w),Q}=\|f\|_{L^{p,\infty} (Q, \frac{w\,dx}{w(Q)})}.
$$

Let us recall the definition  of the Muckenhoupt classes of weights.
We say that a weight $w\in A_p$, $1<p<\infty$, if there exists a positive constant $C$ such that for every cube $Q$,
$$
\bigg(\aver{Q} w\,dx\bigg)\,
\bigg(\aver{Q} w^{1-p'}\,dx\bigg)^{p-1}\le C.
$$
For $p=1$, we say that $w\in A_1$ if there is a positive constant $C$ such
that for every cube $Q$,
$$
\aver{Q} w\,dx
\le
C\, w(y),
\qquad \mbox{for a.e. }y\in Q.
$$
We write $A_\infty=\cup_{p\ge 1} A_p$.
We also need to introduce the reverse H\"older classes. A weight $w\in RH_p$, $1<p<\infty$, if there is a constant $C$ such that for every cube $Q$,
$$
\bigg( \aver{Q} w^p\,dx \bigg)^{1/p} \leq C \bigg(\aver{Q} w\,dx\bigg).
$$
It is well known that $A_\infty=\cup_{r>1} RH_r$. Thus, for $q=1$ it is understood that $RH_1=A_\infty$.
Notice also that if $w\in RH_p$ then H\"older's inequality yields that for any cube $Q$ and for any measurable set $E\subset Q$, we have
\begin{equation}\label{eq:RH-sets}
\frac{w(E)}{w(Q)} \leq C\, \left(\frac{|E|}{|Q|}\right)^{1/p'}.
\end{equation}

Theorem \ref{theorem:Lp} can be extended to spaces with $A_\infty$ weights
as follows:

\begin{theorem}\label{theorem:Lp-w}
Fix $1\le p_0< q<q_0 \le \infty$ and $\mathbb{B}:=(B_Q)_{Q\in \Qcal}$ in $O(p_0,q_0)$.
Given $f\in \mathcal{F}$ and a functional $a$, let us assume that \eqref{hyp:BQ:2k-Q} holds. Given $w\in RH_{(q_0/q)'}$,
there exist fast decay coefficients $\{\tilde{\gamma}_k\}_{k\ge 1}$  such that if we define
\begin{equation} \label{eq:tilde-w}
\tilde{a}(Q)
=
\sum_{k=1} ^\infty \tilde{\gamma}_k\,a(2^k\,Q)
\end{equation}
and $\tilde{a}\in D_q(w)$, then for every cube $Q$,
\begin{equation}\label{conclusion:theor-Lp-w}
\|B_Q f\|_{L^{q,\infty}(w), Q}
\lesssim
\tilde{a}(2\,Q).
\end{equation}
\end{theorem}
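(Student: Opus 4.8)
\textbf{Proof plan for Theorem \ref{theorem:Lp-w}.}
The natural strategy is to mimic the (unweighted) proof of Theorem \ref{theorem:Lp}, replacing Lebesgue measure by $w\,dx$ wherever the geometry of the cubes enters, while keeping the off-diagonal estimates of $O(p_0,q_0)$ in the background measure $dx$. First I would fix a cube $Q_0$ and, following the good-$\lambda$ / Calder\'on–Zygmund philosophy, perform a Whitney-type decomposition of the level sets of a suitable local maximal function of $B_{Q_0} f$ on $Q_0$. This produces a family of pairwise disjoint subcubes $\{Q_i\}$ of $Q_0$ on which one controls the oscillation from below and such that $w(\bigcup_i Q_i)$ is a small fraction of $w(Q_0)$; the $RH_{(q_0/q)'}$ hypothesis on $w$ is precisely what converts the Lebesgue-measure smallness $|\bigcup_i Q_i|\lesssim \varepsilon |Q_0|$ coming from the weak-type behaviour on $dx$ into the weighted smallness, via \eqref{eq:RH-sets} with exponent $1/(q_0/q)' = 1-q/q_0$. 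This is the place where the reverse H\"older exponent has to be exactly $(q_0/q)'$.

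Next I would reproduce, verbatim, the core estimate of Theorem \ref{theorem:Lp}: decompose $B_{Q_0}f = B_{Q_i} B_{Q_0} f + A_{Q_i} B_{Q_0} f$ on each $Q_i$ (or its squared version $B_{Q_0}^2 f$ as in the commutative case, exploiting $(a)$), then split the second piece using $(c)$ and $(d)$ into a local contribution controlled by $a(2^k Q_i)$ through hypothesis \eqref{hyp:BQ:2k-Q}, and a tail contribution summed against the fast-decay coefficients $\alpha_k,\beta_k$. The off-diagonal pieces are measured in $L^{q_0}$ on $dx$; to pass from an $L^{q_0}(dx)$ bound on a cube to the $L^{q,\infty}(w)$ or $L^q(w)$ scale I would use H\"older's inequality in the form $\|g\|_{L^q(w),R}\le \|g\|_{L^{q_0}(dx),R}\,\big(\aver{R} w^{(q_0/q)'}\big)^{1/((q_0/q)' q)}$ and then invoke $w\in RH_{(q_0/q)'}$ to replace the last factor by $\big(\aver{R} w\big)^{1/q}$, i.e.\ to absorb it into the normalization of $L^q(Q,\,w\,dx/w(Q))$. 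Summing the contributions over $k$ against the (new) fast-decay coefficients $\tilde\gamma_k$ reconstructs $\tilde a(2Q_0)$ on the right-hand side, and the $D_q(w)$ hypothesis on $\tilde a$ is used exactly as $D_q$ is used in the unweighted argument to sum $\sum_i \tilde a(Q_i)^q w(Q_i)$ against $\tilde a(Q_0)^q w(Q_0)$ and close the good-$\lambda$ iteration.

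The one genuinely new technical point, and the step I expect to be the main obstacle, is making the good-$\lambda$ bootstrap run on the weighted measure while the smallness that drives it is produced on $dx$. Concretely: the weak-type $(p_0,p_0)$ (or $(p_0,q_0)$) information from $O(p_0,q_0)$ naturally yields $|E_\lambda|$-type bounds, and one must check that the $RH_{(q_0/q)'}$ condition gives enough room — uniformly in the cubes and in $\lambda$ — so that the constant gained at each stage is a fixed fraction strictly less than $1$; this requires keeping track that the doubling of $w$ (automatic from $w\in A_\infty\supset RH_{(q_0/q)'}$) does not degrade over the dyadic dilations $2^k Q_i$ faster than the $\tilde\gamma_k$ decay. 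Everything else is a bookkeeping adaptation: the Kolmogorov-type passage to strong $L^r(w)$ norms for $r<q$ goes through as in Corollary \ref{corollary:Lp-strong}, now with the weighted Kolmogorov inequality, and the variants removing commutativity (Theorems \ref{theorem:Lp-alternative} and \ref{theorem:Lp-alternative-bis}) admit the same weighted counterparts with no extra idea. I would therefore present the proof as: (i) state the weighted good-$\lambda$ inequality and reduce \eqref{conclusion:theor-Lp-w} to it; (ii) prove it by the Whitney decomposition plus the off-diagonal splitting above, highlighting only the two places where $RH_{(q_0/q)'}$ and $D_q(w)$ enter; and (iii) remark that all the corollaries and non-commutative variants transfer verbatim.
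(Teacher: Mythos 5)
Your overall approach is exactly the one the paper uses: weighted good-$\lambda$ inequality driven by the same Whitney/$B_Q^2$ decomposition, with $RH_{(q_0/q)'}$ converting Lebesgue smallness of level sets into weighted smallness via \eqref{eq:RH-sets}, and $D_q(w)$ closing the good-$\lambda$ in the $\Sigma_2$/$\Gamma_2$ part. The reduction step (the weighted analogue of Lemma \ref{lemma:BQ2-reduction}) also proceeds as you describe, by H\"older from $L^{q_0}(dx)$ to $L^q(w)$ normalized on a cube.

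The one place where your plan, taken literally, would hit a wall is precisely the step you flag as the ``main obstacle,'' but the fix is not the one you gesture at. In the term $II$ (the part of the good-$\lambda$ coming from $B^2_{Q_i^t}f-B^2_Qf$), the Lebesgue-measure smallness is produced by the strong-type $(q_0,q_0)$ bound of $M_{p_0}$, giving $|E|/|Q_i^t|\lesssim s^{-q_0}$. If you convert this with $w\in RH_{(q_0/q)'}$ directly, \eqref{eq:RH-sets} (exponent $1/p'$ with $p=(q_0/q)'$, i.e.\ $q/q_0$, not the $1-q/q_0$ you wrote) gives $w(E)/w(Q_i^t)\lesssim s^{-q}$, which is exactly borderline: multiplying by $s^q$ in the iteration leaves a factor $O(1)$ and the bootstrap does not close. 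The issue is not about doubling of $w$ degrading over the dilations $2^kQ_i$; it is that $RH_{(q_0/q)'}$ by itself gives zero margin. The paper's fix is the Gehring self-improvement of reverse H\"older classes: $w\in RH_{(q_0/q)'}$ implies $w\in RH_{r'}$ for some $r$ strictly smaller than $q_0/q$, and one applies \eqref{eq:RH-sets} with $p=r'$ in the $II$ term, giving exponent $1/r$ and hence $w(E)/w(Q_i^t)\lesssim s^{-q_0/r}$ with $q_0/r>q$. (The term $I$ can still use $p=(q_0/q)'$, since its Lebesgue decay already comes with the extra $(\lambda/s)^{p_0}$ factor.) With this one amendment your plan reproduces the paper's proof, including the weighted good-$\lambda$
$$
w(\Omega_{s\,t}\cap Q) \leq c\left[\bigl(\tfrac{\lambda}{s}\bigr)^{p_0\,q/q_0}+s^{-q_0/r}\right] w(\Omega_t\cap Q) + c\bigl(\tfrac{c_0\,\tilde{a}(2Q)}{\lambda t}\bigr)^{q}w(Q),
$$
which closes because $q-q_0/r<0$.
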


Proceeding as in Corollary \ref{corollary:Lp-strong}, from this result we easily obtain weighted strong-type inequalities and in the left hand side inequality of \eqref{conclusion:theor-Lp-w} we can replace $L^{q,\infty}(w)$ by $L^r(w)$ for every $p_0\le r<q$. Besides,  as in Theorem \ref{theorem:Lp:two-functionals}, we can consider a weighted extension of the previous result, where we  assume that $(\tilde{a},\bar{a})\in D_q(w)$, and obtain the corresponding $L^{q,\infty}(w)$ estimate. Also, one can extend Theorems \ref{theorem:Lp-alternative} and \ref{theorem:Lp-alternative-bis} to this setting. The precise statements and proofs are left to interested reader.

We would like to emphasize that we start from the unweighted estimate \eqref{hyp:BQ:2k-Q} and conclude \eqref{conclusion:theor-Lp-w} which is a weighted estimate for the oscillation $B_Q f$.

For a particular version of these results with $p_0=1$, $q_0=\infty$ see \cite[Theorem 3.3]{JM}. Notice that in that case the assumption on $w$ reduces to $w\in A_\infty$.

\subsubsection{Exponential self-improvement}

As recalled in the introduction, the John-Niren\-berg inequality gives an exponential integrability of the oscillations for a function belonging to $BMO$. In \cite{MP} it was shown that if \eqref{des:Poincare generalizada} holds and $a\in D_\infty$ then $|f-f_Q|$ is exponentially integrable in $Q$. This was extended to the oscillations $|f-A_t f|$ in \cite{Jim} (see also \cite{DY-CPAM} for the $BMO$ case) assuming fast decay for the kernels.
In our setting we show that the exponential self-improvement follows in the case $q_0=\infty$ and $a\in D_\infty$ (this should be compared with \cite{Jim} where $p_0=1$, $q_0=\infty$ and $A_Q=A_{t_Q}$).

Before stating our result let us recall the definition of the localized and normalized Luxemburg norms associated to the space $\exp L$:
$$
\|f\|_{\exp L,Q}
=
\inf\left\{
\lambda>0:\aver{Q} \left(\exp\left(\frac{|f(x)|}{\lambda}\right)-1\right)\,dx\le 1
\right\}.
$$
Analogously, we define $\|f\|_{\exp L(w),Q}$ replacing $dx$ by $dw$.

\begin{theorem} \label{theor:exponential}
Fix $1\le p_0<\infty$ and $\mathbb{B}=(B_Q)_{Q\in \mathcal{Q}}$ in $O(p_0,\infty)$. Given $f\in \mathcal{F}$ and a functional $a$, let us assume that \eqref{hyp:BQ:2k-Q} holds. If $a\in D_\infty$ then for every cube $Q$ we have
\begin{equation}\label{conclusion:theor-expo}
\|B_Q f\|_{{\rm exp} L, Q}
\lesssim
\sum_{k\ge 1} \eta_k\,a(2^k\,Q),
\end{equation}
with $\eta_1=1$, $\eta_k=\alpha_k$, $k=2,3,4$ and $\eta_k=\max\{\alpha_k,\alpha_{k-3}\}$ for $k\ge 5$. Furthermore, for every $w\in A_\infty$ we have
\begin{equation}\label{conclusion:theor-expo:w}
\|B_Q f\|_{{\rm exp} L(w), Q}
\lesssim
\sum_{k\ge 1} \eta_k\,a(2^k\,Q).
\end{equation}
\end{theorem}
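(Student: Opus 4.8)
Set $S(Q):=\sum_{k\ge1}\eta_k\,a(2^kQ)$ with the $\eta_k$ as in the statement. The plan is to derive the exponential integrability from strong $L^q$ estimates with constants that are \emph{linear} in $q$. Indeed, expanding $e^t-1=\sum_{m\ge1}t^m/m!$ and using $m^m\le e^m\,m!$ one sees that, for every cube $Q$ and every Borel measure $\nu$ (with $\aver{Q}\cdot\,d\nu$ the $\nu$-average over $Q$),
\[
\|g\|_{\exp L(\nu),Q}\ \le\ 2e\,\sup_{m\ge1}\frac{\|g\|_{L^m(\nu),Q}}{m}\,.
\]
Hence it suffices to prove that $\|B_Q f\|_{L^{q},Q}\lesssim q\,S(Q)$ and $\|B_Q f\|_{L^{q}(w),Q}\lesssim q\,S(Q)$ for all $q\in[p_0,\infty)$, uniformly in $q$ and $Q$ (for $q<p_0$ one bounds the $L^q$-average by the $L^{p_0+1}$-average). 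Equivalently, it is enough to establish the exponential decay
\[
\big|\{x\in Q:\ |B_Q f(x)|>t\}\big|\ \le\ C\,|Q|\,e^{-c\,t/S(Q)}\,,\qquad t>0\,,
\]
together with its analogue for $w(\cdot)$, $w\in A_\infty$.

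First I would note that the relevant hypotheses hold uniformly in $q$: since $a\in D_\infty$ we have $a\in D_q$ with $\|a\|_{D_q}\le\|a\|_{D_\infty}$ for every $q\in[1,\infty)$, and when $q_0=\infty$ the weight condition $w\in RH_{(q_0/q)'}$ of Theorem \ref{theorem:Lp-w} reduces to $w\in RH_1=A_\infty$. The heart of the matter is then to re-examine, in the special case $q_0=\infty$ and $a\in D_\infty$, the good-$\lambda$ inequality established in the proof of Theorem \ref{theorem:Lp} in Section \ref{sec:proof}. Two features collapse. On one hand, with $q_0=\infty$ the off-diagonal estimates \eqref{AQ:on:p-q}--\eqref{AQ:off:p-q:j=1} and \eqref{BR-AQ:off:p-q} give $L^\infty$ control of the relevant compositions of $A_Q$ and $B_Q$ on $2Q$ by weighted sums of the $L^{p_0}$-averages in \eqref{hyp:BQ:2k-Q}. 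On the other hand, since $a\in D_\infty$ each such average over a subcube $R\subseteq Q$ is dominated by the \emph{constant} $\lesssim\sum_{k\ge1}\eta_k\,a(2^kQ)=S(Q)$ instead of by a functional one would still have to control by a maximal operator; the coefficients $\eta_k$ are exactly what one reads off the off-diagonal tails (the value $\alpha_k$ for the annulus $2^kQ\setminus2^{k-1}Q$, and the index shift reflecting the dilations $4Q$, $2^{j+1}Q$ appearing in \eqref{AQ:on:p-q}--\eqref{AQ:off:p-q} and \eqref{BR-AQ:off:p-q}). Consequently the good-$\lambda$ inequality takes the \emph{additive} form ``if $|B_Q f|>\lambda+\kappa\,S(Q)$ on a portion of $Q$, then the superlevel set at level $\lambda$ loses a fixed proportion of its measure''; iterating this dichotomy exactly as in the proof of the classical John--Nirenberg inequality yields the exponential decay above, hence \eqref{conclusion:theor-expo}.

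For the weighted statement \eqref{conclusion:theor-expo:w} I would rerun the same iteration with Lebesgue measure replaced by $dw$, $w\in A_\infty$. The only extra ingredient is the self-improvement $A_\infty=\bigcup_{r>1}RH_r$ together with \eqref{eq:RH-sets} and its companion lower bound: a subset of $Q$ carrying a fixed proportion of $|Q|$ also carries a fixed proportion of $w(Q)$, and conversely. This ensures that the Whitney/Calder\'on--Zygmund selection underlying the good-$\lambda$ inequality and the measure loss at each step persist with $w(\cdot)$ in place of $|\cdot|$, which produces exponential decay of $w(\{x\in Q:|B_Q f(x)|>t\})$ and hence the weighted $L^q$ bound.

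The step I expect to be the main obstacle is turning the good-$\lambda$ inequality behind Theorem \ref{theorem:Lp} from its native \emph{multiplicative} form (the level is doubled, the measure shrinks by a factor that can be taken small at the cost of a larger error term, and the $D_q$ condition is used to sum $q$-th powers of that error over a Whitney family) into the \emph{additive} form that alone yields genuine exponential integrability. This is precisely where $a\in D_\infty$ must be used in full strength --- to replace the oscillation functional on subcubes by a constant so that it can be absorbed into a shift of the level rather than into an $L^q$-bounded term --- and simultaneously where the non-locality of the operators $A_Q$ must be tamed through the fast-decaying coefficients $\alpha_k$, producing the series $\sum_{k\ge1}\eta_k\,a(2^kQ)$ on the right-hand side of \eqref{conclusion:theor-expo} and \eqref{conclusion:theor-expo:w}.
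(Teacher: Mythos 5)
Your plan mirrors the paper's proof: there the argument likewise establishes an additive recursion $\varphi(t)\le e^{-1}\varphi(t-C_0)$ for $\varphi(t)=\sup_R |\{x\in R:|B_R f(x)|>t\,\tilde a(R)\}|/|R|$ (and its $w$-weighted analogue using \eqref{eq:RH-sets}), where $\tilde a(Q)=\sum_k\eta_k\,a(2^kQ)$, the crucial pointwise step being $\|B_Q f-B_{Q_i}f\|_{L^\infty(Q_i)}\le C_0\,\tilde a(Q)$ on each Whitney cube, which comes from $(c)$ with $q_0=\infty$ together with commutativity, \eqref{hyp:BQ:2k-Q}, $a\in D_\infty$ and the Whitney property \eqref{Whitney-prop}. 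The only cosmetic deviation from what you sketch is that the paper models the argument on Theorem \ref{theorem:Lp-alternative} rather than Theorem \ref{theorem:Lp}, working with $B_Q$ directly (not $B_Q^2$), because with $q_0=\infty$ and $(a)$ the identity $B_{Q_i}f-B_Qf=A_QB_{Q_i}f-A_{Q_i}B_Qf$ already has the needed form and condition $(d)$ never enters.
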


It trivially follows from the proof that we have a version of Theorems \ref{theorem:Lp-alternative} and \ref{theorem:Lp-alternative-bis} in the present context. The latter leads to an analogue of Corollary \ref{corol:classical-Poincare} that recovers \cite{MP}.

\section{Applications}\label{section:appl}

In this section we present some applications of our main result. First, we see that under some conditions assumed on the family of oscillations $\mathbb{B}=(B_Q)_{Q\in\mathcal{Q}}$ then the associated $BMO_p$ spaces are $p$-independent, and this can be seen as an abstract John-Nirenberg result.

In addition, we take into account that our main results are ruled by two different objects and their corresponding properties. Namely, we have the oscillation operators $\mathbb{B}:=(B_Q)_{Q\in \Qcal}$ for which we need to study the range where the off-diagonal properties in $O(p_0,q_0)$ hold, and we have the functionals $a$ for which we need to consider their membership to the summability classes $D_r$. Therefore, we consider these tasks separately in Sections \ref{section:functional} and \ref{section:oscillation}. Finally in Section \ref{section:applications-semi} we combine them to obtain self-improving results.

\subsection{Abstract John-Nirenberg inequalities}

Let $\mathbb{B}=(B_Q)_{Q\in\mathcal{Q}}\in O(p_0,q_0)$. For $p\in [p_0,q_0)$, we define the $L^p$-sharp maximal function ${\mathcal M}^\#_{\mathbb{B},p}$ on ${\mathcal F}$ as follows: for every function $f\in {\mathcal F}$
$$
{\mathcal M}^\#_{\mathbb{B},p} f (x)
:=
\sup_{x\in Q} \left( \aver{Q} |B_Qf|^p dx \right)^{1/p}.$$
Then we can define the $BMO_{\mathbb{B}}^p$ spaces related to the collection of oscillation operators ${\mathbb B}$ as
$$
BMO_{\mathbb{B}}^p:=\left\{ f\in {\mathcal F}:\  {\mathcal M}^\#_{\mathbb{B},p} f \in L^\infty \right\}$$
equipped with the corresponding seminorm
$$ \|f\|_{BMO_{\mathbb{B}}^p} := \left\| {\mathcal M}^\#_p f \right\|_{L^\infty}.$$
Clearly, for every $p<q$ we have ${\mathcal M}^\#_{\mathbb{B},p} f(x)\le {\mathcal M}^\#_{\mathbb{B},q} f(x)$ and thus  $\|f\|_{BMO_{\mathbb{B}}^p}\le \|f\|_{BMO_{\mathbb{B}}^q}$.

We have the following ``John-Nirenberg property'':

\begin{corollary} \label{corc2}
Let $1\le p_0<q_0\le \infty$ and assume that we are in one of the two following situations:
\begin{list}{$(\theenumi)$}{\usecounter{enumi}\leftmargin=.8cm
\labelwidth=.8cm\itemsep=0.2cm\topsep=.1cm
\renewcommand{\theenumi}{\roman{enumi}}}
\item The operators $B_Q=B_{\ell(Q)}$ only depend on the sidelength of the cube $Q$ and $\mathbb{B}=(B_Q)_{Q\in\mathcal{Q}}\in O(p_0,q_0)$.

\item $\mathbb{B}=(B_Q)_{Q\in\mathcal{Q}}$ satisfies $(b)$ and $(c)$ in Definition \ref{def:Off}, \eqref{replace-comm} and the ``localization'' property \eqref{eqn:localization:AQ}.
\end{list}
For every $p_0<p<q_0$ we have $\|f\|_{BMO_{\mathbb{B}}^{p_0}}\approx \|f\|_{BMO_{\mathbb{B}}^{p}}$ and consequently $BMO_{\mathbb{B}}^p=BMO_{\mathbb{B}}^{p_0}$.
Moreover, given $1\le p_0<p<s <q_0 \le \infty$, for every $f\in BMO_{\mathbb{B}}^{p_0}$, we have
\begin{equation}  \label{eq:JN2}
{\mathcal M}^\#_{\mathbb{B},p} f(x)   \lesssim M_s \big({\mathcal M}^\#_{\mathbb{B},p_0} f\big)(x),
\qquad \mbox{for a.e. }x\in \re^n.
\end{equation}
\end{corollary}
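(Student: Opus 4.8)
The plan is to deduce the John-Nirenberg property from the main results applied to a cleverly chosen functional built from the sharp maximal function itself. Fix $f\in BMO_{\mathbb{B}}^{p_0}$ and pick $s$ with $p_0<s<q_0$; we want to prove \eqref{eq:JN2} for a fixed exponent $p$ with $p_0<p<s$. The first step is to fix a cube $Q_0$ and define the local functional
\begin{equation*}
a(Q):=\inf_{x\in 3Q}M_s\big(\mathcal{M}^\#_{\mathbb{B},p_0}f\big)(x)
\qquad\text{(or a similar local infimum of the truncated sharp function).}
\end{equation*}
The point of taking an infimum of a maximal function over an enlargement of $Q$ is that such functionals automatically satisfy a $D_r$ condition for a suitable range of $r$; this is the standard mechanism from \cite{Franchi-Perez-Wheeden,MP} and is precisely the kind of functional the theory is designed to handle. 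One checks that $a\in D_q$ for any $q<\infty$ (in fact $a\in D_\infty$ up to harmless constants, being essentially quasi-increasing), so the $D_q$ hypothesis of Theorem \ref{theorem:Lp-alternative} or \ref{theorem:Lp-alternative-bis} is free. The second step is to verify the Poincaré-type hypothesis \eqref{hyp:BQ:2k-Q} (or \eqref{hyp:BQ:2k-Q-bis} in case (ii)): by definition of $\mathcal{M}^\#_{\mathbb{B},p_0}f$ we have for every cube $R$
\begin{equation*}
\Big(\aver{R}|B_Rf|^{p_0}\Big)^{1/p_0}\le\essinf_{x\in R}\mathcal{M}^\#_{\mathbb{B},p_0}f(x)\le a(R),
\end{equation*}
and for the dilated cubes $2^kR$ in case (i), where $B_Q=B_{\ell(Q)}$, one uses Remark \ref{remark:side-Q}: chopping $2^kR$ into congruent copies of $R$, applying the case $k=0$ on each piece, and summing with the $D_{p_0}$ property of $a$. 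Thus \eqref{hyp:BQ:2k-Q} holds with the constant absorbed.

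With the hypotheses in place, the third step is to invoke the appropriate theorem. In case (i) we apply Theorem \ref{theorem:Lp-alternative} (after checking that operators depending only on $\ell(Q)$ satisfy the localization-type relation \eqref{replace-comm} is not needed — instead one uses the commutative $O(p_0,q_0)$ route, i.e. Theorem \ref{theorem:Lp} together with Remark \ref{rem:main-doubling}, since $a$ is doubling); in case (ii) we apply Theorem \ref{theorem:Lp-alternative-bis} directly. Either way we obtain, for every cube $Q$,
\begin{equation*}
\|B_Qf\|_{L^{q,\infty},Q}\lesssim a(CQ),
\end{equation*}
and then Corollary \ref{corollary:Lp-strong} (Kolmogorov) upgrades this to an $L^p$ bound, $p<q<q_0$, namely $\big(\aver{Q}|B_Qf|^p\big)^{1/p}\lesssim a(CQ)$ with $C\in\{2,1\}$ depending on the case. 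Taking the supremum over cubes $Q\ni x$ contained in $Q_0$ and letting $Q_0\uparrow\re^n$, the right-hand side is controlled by $M_s\big(\mathcal{M}^\#_{\mathbb{B},p_0}f\big)(x)$ because $a(CQ)\le\essinf_{3CQ}M_s(\cdots)$ and one absorbs the dilation into the maximal function; this gives \eqref{eq:JN2}. Finally, taking the $L^\infty$ norm in $x$ and using the boundedness of $M_s$ on $L^\infty$ yields $\|f\|_{BMO_{\mathbb{B}}^p}\lesssim\|f\|_{BMO_{\mathbb{B}}^{p_0}}$; the reverse inequality is the trivial monotonicity noted before the statement, so $BMO_{\mathbb{B}}^p=BMO_{\mathbb{B}}^{p_0}$.

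The main obstacle I expect is bookkeeping the \emph{localization of the functional}: the sharp maximal function is globally defined, but Theorems \ref{theorem:Lp-alternative} and \ref{theorem:Lp-alternative-bis} take a functional and produce estimates on all cubes, so to run the argument cleanly one should either truncate $\mathcal{M}^\#_{\mathbb{B},p_0}f$ to a large fixed cube $Q_0$ (to guarantee $a$ is finite and the $D_q$ constant is uniform) and then pass to the limit, or verify directly that $a(Q)=\essinf_{3Q}M_s(\mathcal{M}^\#_{\mathbb{B},p_0}f)$ satisfies $D_q$ with a constant independent of $Q_0$ — the latter is where one must be careful that the enlargement factor $3$ (and the later factor $C$ coming from the theorems) interacts correctly with the maximal function and does not blow up under iteration. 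A secondary point is checking in case (i) that one is genuinely in the hypotheses of the commutative theorem: membership $\mathbb{B}\in O(p_0,q_0)$ is assumed, $a$ is doubling so Remark \ref{rem:main-doubling} applies, and \eqref{hyp:BQ:2k-Q} for all $k$ comes from Remark \ref{remark:side-Q}; none of these is hard but they must be assembled in the right order.
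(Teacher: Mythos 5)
Your overall strategy is the same as the paper's: build a functional $a$ from the sharp maximal function, verify $D_q$ and the Poincar\'e hypothesis, invoke the appropriate theorem, and upgrade via Kolmogorov. But there is a genuine gap in the $D_q$ step, and it is precisely the step that makes this corollary nontrivial.

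You claim that $a(Q):=\inf_{3Q}M_s(\mathcal{M}^\#_{\mathbb{B},p_0}f)$ ``is essentially quasi-increasing'' and hence lies in $D_\infty$, so that ``$a\in D_q$ for any $q<\infty$'' is free. This is false: for $R\subset Q$ one has $3R\subset 3Q$ and therefore $\inf_{3R}\ge\inf_{3Q}$, i.e.\ $a$ is \emph{quasi-decreasing}, which is the opposite of $D_\infty$. If $a$ really were in $D_\infty$, the hypothesis $p<s<q_0$ in the statement would be superfluous and you could take $q$ arbitrarily large; the constraint $q<s$ is essential. The actual argument, which the paper spells out and which your proposal omits, is to fix $q$ with $p<q<s$ and use the standard fact that $M(h)^\delta\in A_1$ whenever $0<\delta<1$ and $Mh<\infty$ a.e. Taking $h=(\mathcal{M}^\#_{\mathbb{B},p_0}f)^s$ and $\delta=q/s<1$ gives, for pairwise disjoint $\{Q_i\}\subset Q$,
\begin{equation*}
\sum_i a(Q_i)^q|Q_i|\le\int_Q M\big((\mathcal{M}^\#_{\mathbb{B},p_0}f)^s\big)^{q/s}\,dx
\lesssim\essinf_Q M\big((\mathcal{M}^\#_{\mathbb{B},p_0}f)^s\big)^{q/s}\,|Q|
\approx a(Q)^q|Q|,
\end{equation*}
which is the $D_q$ condition and visibly requires $q<s$. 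Without this you have no $D_q$ hypothesis to feed into Theorem \ref{theorem:Lp} or Theorem \ref{theorem:Lp-alternative-bis}. Two remarks on the rest: (1) the enlargement factor $3$ in your definition of $a$ is unnecessary and, taken literally, makes the last display above point the wrong way — the paper's choice $a(Q)=\essinf_QM_s(\cdot)$ is both simpler and the correct normalization (you can rescue your variant only by observing a posteriori that $M_s(\cdot)\in A_1$ so that $\essinf_Q\approx\essinf_{3Q}$, but that is circular bookkeeping); (2) your worry about truncating to a fixed $Q_0$ is unfounded — finiteness of $a$ is immediate from $a(Q)\le\|f\|_{BMO_{\mathbb{B}}^{p_0}}<\infty$, so no limiting procedure is needed.
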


\begin{proof}
Assuming momentarily \eqref{eq:JN2}, we have that ${\mathcal M}^\#_{\mathbb{B},p} f(x)\lesssim \|f\|_{BMO_{\mathbb{B}}^{p_0}}$ and thus $\|f\|_{BMO_{\mathbb{B}}^p}\le \|f\|_{BMO_{\mathbb{B}}^{p_0}}$. The converse estimate is trivial as observed above, and therefore the first part of the statement is proved.

To show \eqref{eq:JN2}, let us define the following functional:
$$
a(Q):= \essinf_Q M_s\big({\mathcal M}^\#_{\mathbb{B},p_0} f\big),
$$
and notice that $a(Q)\le \|f\|_{BMO_{\mathbb{B}}^{p_0}}<\infty$. It is clear that  $a$ is doubling, indeed $a(2\,Q)\le a(Q)$.
We take $q$, $p<q<s$ and see that $a\in D_q$: given a family of pairwise disjoint cubes $\{Q_i\}_i\subset Q$, we have
\begin{multline*}
\sum_i a(Q_i)^q\,|Q_i|
=
\sum_i \essinf_{Q_i} \big(M_s \big({\mathcal M}^\#_{\mathbb{B},p_0} f\big)\big)^q\,|Q_i|
\le
\sum_i\,\int_{Q_i} M_s \big({\mathcal M}^\#_{\mathbb{B},p_0} f\big)(x)^q\,dx
\\
\le
\int_Q M \big( ({\mathcal M}^\#_{\mathbb{B},p_0} f)^s\big)(x)^{q/s}\,dx
\lesssim
\essinf_{x\in Q} M \big( ({\mathcal M}^\#_{\mathbb{B},p_0} f)^s\big)(x)^{q/s}\,|Q|
=
a(Q)^q\,|Q|,
\end{multline*}
where we have used that $q<s$ and the well-known fact that $\big(M \big( ({\mathcal M}^\#_{\mathbb{B},p_0} f)^s\big)\big)^{q/s}\in A_1$ since $M \big( ({\mathcal M}^\#_{\mathbb{B},p_0} f)^s\big)<\infty$ a.e.

By definition of the sharp maximal function we have
\begin{align*}
\left(\aver{Q} |B_Q f|^{p_0} \right)^{1/p_0}
& \leq \essinf_{x\in Q} {\mathcal M}^\#_{\mathbb{B},p_0} f (x)
\le
\left(\aver{Q} ({\mathcal M}^\#_{\mathbb{B},p_0}f)^s\,dx\right)^{1/s}\\
& \le
\essinf_{x\in Q} M_s \big({\mathcal M}^\#_{\mathbb{B},p_0} f\big)(x)
=
a(Q),
\end{align*}
and therefore \eqref{hyp:BQ:2k-Q} holds for $k=0$.

In situation $(i)$, where the operators $B_Q=B_{\ell(Q)}$ only depend on the sidelength of the cube, it follows that \eqref{hyp:BQ:2k-Q} holds for all $k\ge 1$ by Remark \ref{remark:side-Q} and
the fact that  $a\in D_{q} \subset D_{p_0}$ since $p_0<q$. Then can apply Theorem \ref{theorem:Lp} and Remark \ref{rem:main-doubling} to obtain $\|B_Q f\|_{L^{q,\infty},Q}\lesssim a(Q)$.

In the context of local operators $A_Q$, situation $(ii)$, by Theorem \ref{theorem:Lp-alternative-bis}, we need to show \eqref{hyp:BQ:2k-Q-bis} (i.e., \eqref{hyp:BQ:2k-Q} with $k=1$). Indeed, using \eqref{replace-comm}, we have
$$
B_Qf(x) = f(x)-A_Q(A_{2Q} f)(x) - A_Q(B_{2Q}f)(x) = B_{2Q}f(x) - A_Q(B_{2Q} f)(x),
\quad x\in 2\,Q.
$$
Hence, using the ``localization'' property \eqref{eqn:localization:AQ}, \eqref{AQ:on:p-q}, and proceeding as before we get
$$
\left(\aver{2\,Q} |B_Q f|^{p_0} \right)^{1/p_0}
\leq (1+2^{-n/p_0}\,\alpha_2) \left(\aver{2Q} |B_{2Q} f|^{p_0} \right)^{1/p_0} \lesssim a(2Q).
$$
Next, we apply Theorem \ref{theorem:Lp-alternative-bis} to obtain $\|B_Q f\|_{L^{q,\infty},Q}\lesssim a(2\,Q)\le a(Q)$.

In either scenario we have shown that $\|B_Q f\|_{L^{q,\infty},Q}\lesssim a(Q)$. This and Kolmogorov's inequality (since $p<q$) give that for every $Q\ni x$,
$$
\left( \aver{Q} |B_Qf|^p dx \right)^{1/p}
\le
C\,\essinf_Q M_s\big({\mathcal M}^\#_{\mathbb{B},p_0} f\big)
\le
C\,
M\big(M_s\big({\mathcal M}^\#_{\mathbb{B},p_0} f\big)\big)(x).
$$
Taking the supremum over all the cubes $Q\ni x$ and using that $M_s\big({\mathcal M}^\#_{\mathbb{B},p_0} f\big)\in A_1$ since $s>1$ we obtain as desired \eqref{eq:JN2}.
\end{proof}

\begin{remark}
In the recent paper \cite{BZ3}, John-Nirenberg inequalities for $BMO$ spaces are also under consideration using a ``Hardy space point of view''.  There the authors impose the condition that the operators $B_Q$ are bounded from $BMO_{p_0}$ to $BMO_{q_0}$, more precisely for all cubes $R\subset Q$
\begin{equation}\label{eq:BMO}
\left(\aver{2R} |B_R A_Q f|^{q_0}\,dx \right)^{1/q_0} \lesssim \|f\|_{BMO_{p_0}}.
\end{equation}
It is interesting to point out that \eqref{eq:BMO} is similar in nature to $(d)$ in Definition \ref{def:Off}, although the last one is weaker since the right hand side of \eqref{BR-AQ:off:p-q} is just an $L^{p_0}$-average in place of an $L^{p_0}$-oscillation. We also note that commutativity was not assumed in \cite{BZ3} although property \eqref{eq:BMO} reflects some kind of commutativity between the oscillations with a gain of integrability. These two different points of view, the one presented here where commutativity between the $B_Q$'s is required, and the one in \cite{BZ3} with stronger properties assumed on the oscillations, seem to be quite similar. It would be interesting to combine both methods and provided a unified approach.
\end{remark}

\subsection{Examples of functionals}\label{section:functional}

In this section we consider some functionals $a$ and study their membership to the classes $D_r$. The examples that we consider are taken from \cite{JM} (see also \cite{BJM}) and we refer the reader to that reference for full details. Let us notice that the names that are assigned to these examples will become clear in Section \ref{section:applications-semi}

\begin{example}[$BMO$ and Lipschitz]\label{ex:BMO}
We consider the functionals $a(Q)=|Q|^{\alpha/n}$ with $\alpha\ge 0$. For the classical oscillations $a$ is associated to $BMO$ for $\alpha=0$ and to the Lipschitz (Morrey-Campanato) spaces for $\alpha>0$. It is trivial that $a\in D_\infty$ and then $a\in D_r$ for all $1\le r<\infty$ and $a\in D_r(w)$ for all $1<r<\infty$ and $w\in A_\infty$. Note also that $a$ is clearly doubling. A more general example of a functional in $D_\infty$ is $a(Q)=\varphi(a(Q))$ with $\varphi$ non-decreasing and non-negative (see \cite{JM} for the motivation and more details).
\end{example}

\begin{example}[Fractional averages]\label{ex:fract}
These are related to the concept of higher gradient introduced by J. Heinonen and P. Koskela in
\cite{Heinonen-Koskela-1}, \cite{Heinonen-Koskela-2}. Given $\lambda\geq1$, $0<\alpha<n$, and a weight $u\in A_\infty$ (see Section \ref{section:weights} for the precise definition),
we set
$$
a(Q)= \ell(Q)^\alpha\, {\bigg(\frac{u(Q)}{|Q|}\bigg)}^{1/s}.
$$
If $s\ge n/\alpha$ then $a\in D_\infty$ and therefore $a\in D_r$ for all $1\le r<\infty$. Otherwise, $1\le s<n/\alpha$, in \cite{Franchi-Perez-Wheeden}, it is shown that $a\in D_r$ for $1<r<s\,n/(n-\alpha\, s)$ for general weights $u$. Assuming further $u\in A_\infty$  they also proved that $a\in D_{\frac{s\,n}{n-\alpha\,s}+\epsilon}$ for some $\epsilon>0$ (depending on $u$) and that $a$ is doubling.
\end{example}

\begin{example}[Reduced Poincar\'e inequalities]
Given $1\le p<\infty$ and $0\le h\in L^s_{\rm loc}(\re^n)$, $1\le s<\infty$ we take
$$
a(Q)=\ell(Q)\,\left(\aver{Q} h^s\, dx\right)^{1/s}.
$$
Setting $s^*:=s\,n/(n-s)$ if $1\le s<n$ and $s^*:=\infty$ if $s\ge n$ one can show that $a\in D_{s^*}$ (see \cite{Franchi-Perez-Wheeden}, \cite{JM}).

If we applied Theorem \ref{theorem:Lp} we would need to see that the corresponding expanded functional $\tilde{a}$ satisfy a $D_q$ condition. In doing that, it was shown in \cite{JM}, \cite{BJM} that one needs to change the sequence defining $\tilde{a}$. This leads to use Theorem \ref{theorem:Lp:two-functionals} (in place of Theorem \ref{theorem:Lp}) and therefore the applications that may arise from these functionals are essentially those contained in the following example about expanded Poincar\'e inequalities.
\end{example}

\begin{example}[Expanded Poincar\'e inequalities] \label{ex:EPI}
We consider an expansion of the functionals in the previous example:
$$
a(Q) = \sum_{k=0}^\infty \gamma_k\, \ell(2^k\,Q)\, \left(\aver{2^k\,Q} h^s\, dx\right)^{1/s}.
$$
Again $a\in D_\infty$ if $s\ge n$. For $1\le s<n$, when trying to obtain the  $D_q$ condition we need to take  into account the overlap of the dilated cubes ---the cubes $\{Q_i\}_i$ are disjoint but we have  to consider their dilations $\{2^k\,Q_i\}_i$ for all $k\ge 0$.  This leads to change the sequence $\gamma_k$ and so we end up with a $D_q$ condition for a pair of functionals (see \eqref{eq:DDq} above).

\begin{lemma}\label{lemma:D-q:EPI}
For every $1\le q<s^*$ we have that $(a,\bar{a})\in D_q$ where
$$
\bar{a}(Q)=\sum_{k\geq0}
\bar{\gamma}_k\, \ell(2^k\,Q)\, \left(\aver{2^k\,Q} h^s\, dx\right)^{1/s},
$$
with $\bar{\gamma}_0=C\,\gamma_0$ and $\bar{\gamma}_k=2^{-k\,n\,(\frac1s-\frac1q)^+}\,\sum_{l=k-1}^\infty \gamma_l\,2^{l\,n\,(\frac1s-\frac1{q})^+}$, $k\ge 1$.
\end{lemma}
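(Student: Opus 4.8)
The plan is to reduce everything to the single-scale functional $b(Q):=\ell(Q)\,\big(\aver{Q}h^s\,dx\big)^{1/s}$, so that $a(Q)=\sum_{k\ge0}\gamma_k\,b(2^kQ)$ and $\bar a(Q)=\sum_{k\ge0}\bar\gamma_k\,b(2^kQ)$, and to use two inputs. First, $b\in D_{s^*}$ with constant $1$ (this is the Reduced Poincar\'e inequalities example above, cf.\ \cite{Franchi-Perez-Wheeden,JM}): for pairwise disjoint $\{Q_i\}_i\subset Q$ the homogeneity identity $\frac{s^*}{n}+1-\frac{s^*}{s}=0$ turns the claim into
\[
\sum_i\Big(\int_{Q_i}h^s\,dx\Big)^{s^*/s}\le\Big(\int_Q h^s\,dx\Big)^{s^*/s},
\]
which is superadditivity of $t\mapsto t^{s^*/s}$ (since $s^*/s\ge1$) together with disjointness; as the classes $D_r$ are decreasing, $b\in D_q$ with $\|b\|_{D_q}\le1$ for every $1\le q\le s^*$. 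Second, Minkowski's inequality in $\ell^q$ of the index $i$ with weights $|Q_i|$ (this is where $q\ge1$ is needed) gives, for any family $\{Q_i\}_i\subset Q$ of pairwise disjoint cubes,
\[
\Big(\sum_i a(Q_i)^q|Q_i|\Big)^{1/q}\le\sum_{k\ge0}\gamma_k\,S_k^{1/q},\qquad S_k:=\sum_i b(2^kQ_i)^q|Q_i| .
\]
Everything then reduces to estimating $S_k$ for each fixed $k$.

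To bound $S_k$ I would split $\{Q_i\}_i$ into dyadic levels $L_m=\{i:2^{-m-1}\ell(Q)<\ell(Q_i)\le2^{-m}\ell(Q)\}$, $m\ge0$, and record three elementary facts about $\{2^kQ_i:i\in L_m\}$: each has sidelength $\approx 2^{k-m}\ell(Q)$; each is contained in $2^{(k-m)^++1}Q$; and the whole subcollection has overlap $\lesssim 2^{\min(k,m)\,n}$ (the centres of the level-$m$ cubes are $\gtrsim 2^{-m}\ell(Q)$-separated and there are at most $\lesssim 2^{m\,n}$ of them). Splitting each $L_m$ into $\lesssim 2^{\min(k,m)\,n}$ pairwise disjoint subfamilies and using $|Q_i|=2^{-k\,n}|2^kQ_i|$: for $0\le m\le k$ one applies $b\in D_q$ on each subfamily, with parent cube $2^{k-m+1}Q$, to get $\sum_{i\in L_m}b(2^kQ_i)^q|Q_i|\lesssim b(2^{k-m+1}Q)^q|Q|$ with no growth in $m$, so summing these finitely many $m$ contributes $|Q|\sum_{l=1}^{k+1}b(2^lQ)^q$; for $m>k$ the dilated cubes are \emph{smaller} than $Q$ and the plain $D_q$ bound stops decaying in $m$, so one keeps the scaling of $b$ and, after elementary bookkeeping (the cases $q\ge s$ and $q<s$ differing slightly), gets $\sum_{i\in L_m}b(2^kQ_i)^q|Q_i|\lesssim 2^{(k-m)\,n\,(1-q/s^*)}b(2Q)^q|Q|$ (resp.\ $2^{(k-m)\,q}b(2Q)^q|Q|$) --- an exponent that is strictly positive, which is exactly where $q<s^*$ enters --- so the geometric series over $m>k$ converges and contributes $\lesssim b(2Q)^q|Q|$. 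Altogether $S_k\lesssim|Q|\sum_{l=1}^{k+1}b(2^lQ)^q$.

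Feeding this back and using $q\ge1$ once more ($(\sum_l c_l^q)^{1/q}\le\sum_l c_l$),
\[
\sum_i a(Q_i)^q|Q_i|\le\Big(\sum_{k\ge0}\gamma_k\,S_k^{1/q}\Big)^q\lesssim|Q|\,\Big(\sum_{l\ge1}\Big(\sum_{k\ge l-1}\gamma_k\Big)b(2^lQ)\Big)^q ,
\]
which is the asserted estimate with $\bar\gamma_l$ replaced by $\sum_{k\ge l-1}\gamma_k$. To pass to the stated $\bar\gamma_l=2^{-l\,n\,(\frac1s-\frac1q)^+}\sum_{k\ge l-1}\gamma_k\,2^{k\,n\,(\frac1s-\frac1q)^+}$ one only needs that each weight $2^{(k-l)\,n\,(\frac1s-\frac1q)^+}$ with $k\ge l-1$ is $\ge 2^{-n\,(\frac1s-\frac1q)^+}$, so the stated coefficients dominate $\sum_{k\ge l-1}\gamma_k$ up to a dimensional constant, while the extra non-negative term $\bar\gamma_0\,b(Q)=C\gamma_0\,b(Q)$ only enlarges the right-hand side; hence $(a,\bar a)\in D_q$ as claimed.

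The main obstacle is the estimate for $S_k$. The dilated family $\{2^kQ_i\}_i$ genuinely overlaps --- if the $Q_i$ have wildly different sizes a single point can lie in arbitrarily many of the $2^kQ_i$ --- so one cannot feed $\{2^kQ_i\}$ directly into the $D_q$ property of $b$. The dyadic-level splitting restores a size-uniform, hence $2^{\min(k,m)\,n}$-bounded, overlap on which $D_q$ is usable; but even then, for the small scales $m>k$ the $D_q$ property alone is too lossy (it does not register that the dilated cubes fill only a tiny portion of their parent), which forces the use of the explicit scaling homogeneity of $b$ --- and this is precisely where the borderline Sobolev exponent $s^*$, hence the hypothesis $q<s^*$ and the passage to the expanded right-hand side $\bar a$, come in. The remaining ingredients (Minkowski, $b\in D_{s^*}$, and the final comparison of coefficients) are routine.
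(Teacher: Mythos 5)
Your proposal is correct and, in substance, follows the same route as the paper's proof (the paper proves Lemma~\ref{lemma:D-q:EPI:w} directly and obtains Lemma~\ref{lemma:D-q:EPI} as the special case $w\equiv1$, $\theta=1$): Minkowski's inequality to decouple the dilation index from the cube index, the same dyadic-level stratification (your $L_m$ is the paper's $E_l$), the same overlap estimate $\sum_{Q_i\in L_m}\chi_{2^kQ_i}\lesssim 2^{n\min\{m,k\}}\chi_{2^{(k-m)^++O(1)}Q}$, a split at $m\approx k$, and the homogeneity exponent $n(1-q/s^*)>0$ governing the small scales --- which is exactly where $q<s^*$ enters and why $\bar a$ must replace $a$. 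The one genuine stylistic difference is that you stay in $\ell^q$ and invoke $b\in D_q$ (via $b\in D_{s^*}$ with constant $1$) on the large-scale levels, whereas the paper first passes from $\ell^q$ to $\ell^s$ (using $q\ge s$) and then runs a direct computation; both are valid, and indeed your derived coefficients $\sum_{k\ge l-1}\gamma_k$ are, up to a dimensional constant, at most the $\bar\gamma_l$ in the statement when $q\ge s$, which you correctly observe suffices for the pair condition \eqref{eq:DDq}.

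One point deserves a cleanup. Your parenthetical claim that for $q<s$ the small-scale estimate becomes $\sum_{i\in L_m}b(2^kQ_i)^q|Q_i|\lesssim 2^{(k-m)q}b(2Q)^q|Q|$ does not follow directly: when $q/s<1$ the subadditivity $\sum_i t_i^{q/s}\le(\sum_i t_i)^{q/s}$ that drives the $q\ge s$ computation fails, and the natural H\"older substitute introduces a factor $(\#L_m)^{1-q/s}\approx 2^{mn(1-q/s)}$ that grows in $m$. The clean way to treat $1\le q<s$ is exactly what the paper does at the outset of the proof of Lemma~\ref{lemma:D-q:EPI:w}: reduce to $s\le q<s^*$ by monotonicity --- for pairs of functionals this is the trivial H\"older observation that $(a,\bar a)\in D_s$ implies $(a,\bar a)\in D_q$ for all $q\le s$ --- and note that for $q\le s$ the exponent $(\tfrac1s-\tfrac1q)^+$ is $0$, so the statement's $\bar\gamma_k=\sum_{l\ge k-1}\gamma_l$ is the same for all such $q$. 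With that reduction inserted in place of the vague remark about the ``cases differing slightly'', your proof is complete.
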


Here we have set $t^+:=\max(t,0)$. This result (essentially obtained in \cite{JM}, see also \cite{BJM}), follows at once from Lemma \ref{lemma:D-q:EPI:w} with $w\equiv 1$ and $\theta=1$.

Given $w\in A_1$ we can also consider the functional
$$
a_w(Q)
=
\sum_{k=0}^\infty \gamma_k\, \ell(2^k\,Q)\, \left(\aver{2^k\,Q} h^s\, dw\right)^{1/s}.
$$
If $s\ge n$, $a_w \in D_\infty$ follows from \eqref{w-A1-Ainfty} below ---which in turn implies that the functional $\ell(Q)\,(\aver{Q} h^s\,dw)^{1/s}$ is quasi-increasing. For $1\le s<n$, we have the following extension of Lemma \ref{lemma:D-q:EPI}:

\begin{lemma}\label{lemma:D-q:EPI:w}
Given $w\in A_1$, there exists $\theta=\theta(w)$, $0<\theta\le 1$, such that for every $1\le q<s^*$, we have  $(a_w,\bar{a}_w)\in D_q(w)$ where
$$
\bar{a}_w(Q)=\sum_{k\geq0}
\bar{\gamma}_k\, \ell(2^k\,Q)\, \left(\aver{2^k\,Q} h^s\, dw\right)^{1/s},
$$
with $\bar{\gamma}_0=C\,\gamma_0$ and $\bar{\gamma}_k=2^{-k\,n\,(\frac{1-\theta}{s}+\theta(\frac1s-\frac1q)^+)}\,\sum_{l=k-1}^\infty \gamma_l\,2^{l\,n\,(\frac{1-\theta}{s}+\theta(\frac1s-\frac1q)^+)}$, $k\ge 1$.
\end{lemma}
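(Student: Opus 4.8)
The plan is to reduce everything to a single local computation on a fixed cube $Q$ and a family of pairwise disjoint subcubes, exactly in the spirit of the $D_q(w)$ condition, and then iterate via a Whitney-type or stopping-time decomposition. First I would fix a cube $Q$, a family $\{Q_i\}_i\subset Q$ of pairwise disjoint cubes, and the weight $w\in A_1$. The key is to control, for each $k\ge 0$,
$$
\sum_i \ell(2^k Q_i)^q\,\Big(\aver{2^k Q_i} h^s\,dw\Big)^{q/s}\,w(Q_i)
$$
by a quantity of the form $\bar\gamma_k^{\,q}\,\ell(2^k Q)^q\,\big(\aver{2^k Q} h^s\,dw\big)^{q/s}\,w(Q)$ up to multiplying by a suitable power of $2^{-k}$, after which one sums in $k$ using Minkowski's inequality in $\ell^q$ on the expansions defining $a_w$ and $\bar a_w$. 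The role of $\theta$ is precisely to quantify how much room the $A_1$ (equivalently, some reverse-Hölder-type) property of $w$ gives us: by \eqref{eq:RH-sets} applied to $w\in A_1\subset RH_{1/(1-\theta)}$ for an appropriate $\theta=\theta(w)\in(0,1]$, one has $w(E)/w(F)\lesssim (|E|/|F|)^{1-\theta}$ for $E\subset F$, and this is what converts overlap of the dilated cubes $\{2^k Q_i\}_i$ (which overlap with bounded multiplicity $\lesssim 2^{kn}$) into the decay factor $2^{-k n\,(\frac{1-\theta}{s}+\theta(\frac1s-\frac1q)^+)}$.

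The main technical step, for fixed $k$, is the following. Group the cubes $Q_i$ according to their sidelength relative to $\ell(2^{-?}Q)$; more practically, note that the $2^k Q_i$ are contained in a fixed dilate of $2^k Q$ and have overlap at most $C\,2^{kn}$. One estimates
$$
\sum_i \ell(2^kQ_i)^q\Big(\aver{2^kQ_i}h^s\,dw\Big)^{q/s}w(Q_i)
=
\sum_i \ell(2^kQ_i)^{q-\frac{nq}{s}}\Big(\int_{2^kQ_i}h^s\,dw\Big)^{q/s}\frac{w(Q_i)}{w(2^kQ_i)^{q/s}}.
$$
Here one splits into the two regimes $q\le s$ and $q>s$. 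When $q>s$, Hölder in $i$ (with exponents $s/q$ and its conjugate) together with $\sum_i \chi_{2^kQ_i}\lesssim 2^{kn}$ and the weight comparison $w(Q_i)\lesssim 2^{-kn(1-\theta)}w(2^kQ_i)$ produces the power $(\frac1s-\frac1q)^+$ with the extra factor $\frac{1-\theta}{s}$ from the weight; when $q\le s$, Jensen/convexity in the form $\sum_i t_i^{q/s}\le (\sum_i t_i)^{q/s}$ (times the cardinality factor handled by overlap) does the job with $(\frac1s-\frac1q)^+=0$. In both cases one is left with $\lesssim 2^{-kn(\frac{1-\theta}{s}+\theta(\frac1s-\frac1q)^+)}\,\ell(2^kQ)^q\big(\aver{2^kQ}h^s\,dw\big)^{q/s}w(Q)$, which is the claimed single-$k$ bound with the indicated power of $2$. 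Finally, taking $q$-th roots, summing over $i$ first and then over $k$ via Minkowski in $\ell^q$ applied to $a_w(Q_i)=\sum_k\gamma_k(\cdots)$, the shifted index (the $\sum_{l=k-1}^\infty$ in the definition of $\bar\gamma_k$) appears because the cube $2^kQ_i$ inside the sum for $a_w(Q_i)$ gets compared to $2^{k+1}Q$ or larger on the right; relabeling $l$ for the outer summation variable yields exactly $\bar\gamma_k=2^{-kn(\frac{1-\theta}{s}+\theta(\frac1s-\frac1q)^+)}\sum_{l=k-1}^\infty\gamma_l\,2^{ln(\frac{1-\theta}{s}+\theta(\frac1s-\frac1q)^+)}$, with $\bar\gamma_0=C\gamma_0$ absorbing the on-diagonal ($k=0$) term where no dilation occurs.

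The step I expect to be the main obstacle is pinning down the correct exponent $\theta=\theta(w)$ and checking that the weight estimate $w(Q_i)\lesssim 2^{-kn(1-\theta)}w(2^kQ_i)$ is both available from $w\in A_1$ and sharp enough that the resulting $\bar\gamma_k$ are summable whenever the original $\gamma_k$ decay fast enough — in particular one must verify that $w\in A_1$ self-improves to a reverse-Hölder estimate with a definite exponent, so that $\theta$ can be taken strictly positive (the classical fact that $A_1\subset \bigcup_{p>1}RH_p$, with $p=1/(1-\theta)$). A secondary bookkeeping nuisance is making sure the argument is uniform over the family $\{Q_i\}$ regardless of their relative sizes, which is why the overlap bound $\sum_i\chi_{2^kQ_i}\lesssim 2^{kn}$ (valid since the $Q_i$ are disjoint and contained in $Q$) is used in place of any finer geometric information. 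Once these are in place the rest is the routine Minkowski-in-$\ell^q$ summation indicated above, and setting $w\equiv1$, $\theta=1$ recovers Lemma \ref{lemma:D-q:EPI} as stated. \qed
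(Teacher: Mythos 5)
Your proposal lands on the right high-level strategy (expand $a_w$ via Minkowski in $\ell^q$, fix $k$ and treat the sum over $i$, use the self-improving $RH$ property of $w\in A_1$ to produce the exponent $\theta$, and recombine), but the treatment of the fixed-$k$ sum has a genuine gap and a notational error that propagates into the computation.

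The gap. You propose to bound, for each fixed $k$,
\[
\sum_i \ell(2^k Q_i)^q\Big(\aver{2^k Q_i} h^s\,dw\Big)^{q/s}\,w(Q_i)
\]
by a single term $2^{-kn(\cdot)q}\,\ell(2^kQ)^q(\aver{2^kQ}h^s\,dw)^{q/s}\,w(Q)$, i.e.\ by $a_0(2^kQ)^q$ alone, using only the crude overlap bound $\sum_i\chi_{2^kQ_i}\lesssim 2^{kn}$ and a single Hölder/Jensen step. This cannot work as stated: when $\ell(Q_i)\lesssim 2^{-k}\ell(Q)$ the dilated cube $2^kQ_i$ sits inside $2Q$, not inside a large dilate, and the overlap among these small cubes is $O(1)$, not $\approx 2^{kn}$. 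Their contribution is therefore controlled by $a_0(2Q)$ (the term $l=1$), not by $a_0(2^kQ)$. Conversely, for $Q_i$ with $\ell(Q_i)\approx 2^{-l}\ell(Q)$, $l\le k$, the dilates $2^kQ_i\subset 2^{k-l+2}Q$ and the overlap is $\approx 2^{ln}$. So the fixed-$k$ term necessarily spreads over a full range of dilations $2^lQ$, $1\le l\le k+1$, and this is precisely why the paper organizes the $Q_i$ into sidelength families $E_l$ and splits the resulting sum into $\Sigma_1$ ($l\le k+1$) and $\Sigma_2$ ($l\ge k+2$). If the single-$k$ bound you claim were true, relabeling and summing in $k$ would give the stronger (and, for nonconstant $\gamma_k$, different) formula $\bar\gamma_k=\gamma_k\,2^{-kn(\cdot)}$, not the Abel-type sum $\bar\gamma_k=2^{-kn(\cdot)}\sum_{l\ge k-1}\gamma_l\,2^{ln(\cdot)}$ that actually appears in the lemma; the nontrivial shifted summation in $\bar\gamma_k$ is a direct footprint of the $\Sigma_1$-contribution spreading over all $l\le k+1$, not a consequence of a vague ``relabeling because $2^kQ_i$ gets compared to $2^{k+1}Q$ or larger'' (it is compared to $2^lQ$ with $l\le k+1$, and as small as $2Q$, which is the opposite direction of what you wrote). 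Without the $E_l$ decomposition, the proposal has no mechanism to produce both the $a_0(2Q)$ tail term and the correct $\bar\gamma_k$.

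The exponent error. You use $w\in A_1\subset RH_{1/(1-\theta)}$ and then write $w(E)/w(F)\lesssim(|E|/|F|)^{1-\theta}$ and $w(Q_i)\lesssim 2^{-kn(1-\theta)}w(2^kQ_i)$. But $RH_{1/(1-\theta)}$ is exactly $RH_{(1/\theta)'}$, and \eqref{eq:RH-sets} with $p=(1/\theta)'$ gives $1/p'=\theta$, hence $w(E)/w(F)\lesssim(|E|/|F|)^{\theta}$ and $w(Q_i)\lesssim 2^{-kn\theta}w(2^kQ_i)$. Your version becomes trivially useless as $\theta\to 1$ (the unweighted case), whereas the correct exponent $\theta$ combines with the $A_1$ lower bound $w(S)/w(Q)\gtrsim|S|/|Q|$ to produce $\tfrac1s-\tfrac{\theta}{q}=\tfrac{1-\theta}{s}+\theta(\tfrac1s-\tfrac1q)$ in the relevant regime $s\le q$. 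Also note that the case $q<s$ requires no separate Jensen argument: since the $D_q(w)$ conditions are decreasing in $q$, it suffices to prove the case $q=s$ (for which $(\tfrac1s-\tfrac1q)^+=0$), and the paper makes this reduction at the outset; maintaining two parallel regimes is extra bookkeeping with no gain.

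In summary, the skeleton is right, but the heart of the argument — the sidelength-family decomposition and the $\Sigma_1/\Sigma_2$ split that generates the Abel sum in $\bar\gamma_k$ — is missing, and the $RH$ exponent is transposed.
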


Let us observe that $\theta$ is such that $w\in RH_{(1/\theta)'}$ (see \eqref{w-A1-Ainfty}) and therefore if $w\equiv 1$ we can take $\theta=1$.

\end{example}

\subsection{Oscillation operators} \label{section:oscillation}

As we have already observed in Corollary \ref{corol:classical-Poincare}, our main results can be applied to the classical oscillations associated to the averaging operators and thus we can recover some results from \cite{Franchi-Perez-Wheeden}. Furthermore, in this section we show that the abstract framework (described in Definition \ref{def:Off}) includes the case where the operators $A_Q$ come from a  semigroup satisfying off-diagonal estimates. As mentioned before, we work with cubes but the arguments presented below adapt easily to balls.

Given $1\le p_0\le q_0\le\infty$, we say that a family of linear operators $\{T_t\}_{t>0}$  satisfies $L^{p_0}(\re^n)-L^{q_0}(\re^n)$ off-diagonal estimates,  if
there exist $C, c>0$ such that for all closed sets $E,F$ and functions $f$ supported on $F$,
\begin{equation}\label{ass:offdiag}
\left(\int_E \left|T_t f\right|^{q_0}\,dx\right)^{1/q_0} \leq C\, t^{-\frac{n}{2}\left(\frac{1}{p_0}-\frac{1}{q_0}\right)} e^{-c\frac{d(E,F)^2}{t}} \left(\int_{F} |f|^{p_0} \,dx\right)^{1/p_0}.
\end{equation}
Note that for $q_0=\infty$ we replace the $L^{q_0}(\re^n)$ norm by the $L^{\infty}(\re^n)$ norm. The value of $c$ has no interest to us provided it remains
positive. Thus, we will freely use the same $c$ from line to line.

Let $L$ be an operator on $\R^n$ generating a  semigroup $e^{-tL}$. Given $N\ge 1$ and a cube $Q$, we set
$$
B_{Q,N}:=\left(\Id-e^{-\ell(Q)^2 L}\right)^{N} \qquad \textrm{and} \qquad A_{Q,N}:=\Id-B_{Q,N}.
$$

\begin{proposition}\label{prop:off-semi}
Let $1\le p_0<q_0\le \infty$. Given $L$ as before, if $\{e^{-t\,L}\}_{t\geq 0}$ satisfies $L^{p_0}(\re^n)-L^{q_0}(\re^n)$ off-diagonal estimates and is strongly continuous on $L^p(\re^n)$ for some $p\in[p_0,q_0]$, $p<\infty$, then for all $N\ge 1$ the family $\mathbb{B}_N=\{B_{Q,N}\}_{Q\in\mathcal{Q}}$ satisfies $(a)$, $(b)$, $(c)$ in Definition \ref{def:Off}  with $\alpha_k=C\,e^{-c\,4^k}$.
Moreover, if $\{(t\,L)^k\,e^{-t\,L}\}_{t>0}$ satisfies $L^{p_0}(\re^n)-L^{q_0}(\re^n)$ off-diagonal estimates for $0\le k\le N$ with $N\ge  n/(2\,q_0)$, then $(d)$ holds with $\beta_k=C\,e^{-c\,4^k}$.  Consequently, $\mathbb{B}_N$ is $O(p_0,q_0)$.
\end{proposition}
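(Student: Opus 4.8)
The plan is to verify each of the four conditions $(a)$–$(d)$ of Definition \ref{def:Off} in turn, reducing everything to the $L^{p_0}$–$L^{q_0}$ off-diagonal estimate \eqref{ass:offdiag} for the semigroup and its derivatives, together with an annular decomposition of $\re^n$. Condition $(a)$ is essentially free: since each $B_{Q,N}=(\Id-e^{-\ell(Q)^2L})^N$ is a polynomial in the semigroup operators $e^{-tL}$, and these commute among themselves (they all commute with $L$ and with each other), any two operators $B_{Q,N}$ and $B_{R,N}$ commute. Condition $(b)$ follows from the strong continuity of $\{e^{-tL}\}$ on $L^p$ for some $p\in[p_0,q_0]$: strong continuity of a semigroup gives a uniform bound $\|e^{-tL}\|_{p\to p}\le M$ on compact time intervals, but here one needs a bound uniform in $t\in(0,\infty)$; this is obtained by combining the small-time bound from strong continuity with the large-time bound coming from \eqref{ass:offdiag} taken with $E=F=\re^n$ (which gives $\|e^{-tL}\|_{p_0\to p_0}\lesssim 1$ uniformly), and then expanding $B_{Q,N}$ by the binomial theorem; by interpolation/Jensen one gets uniform $L^{p_0}$ boundedness, hence \eqref{BQ:Lp-uniform}.

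For condition $(c)$, I would expand $A_{Q,N}=\Id-(\Id-e^{-\ell(Q)^2L})^N=\sum_{j=1}^N\binom{N}{j}(-1)^{j+1}e^{-j\,\ell(Q)^2L}$, so $A_{Q,N}$ is a linear combination of $e^{-j\,\ell(Q)^2L}$ with $t=j\,\ell(Q)^2\approx\ell(Q)^2$. For the on-diagonal estimate \eqref{AQ:on:p-q} apply \eqref{ass:offdiag} with $E=2Q$, $F=4Q$, $t\approx\ell(Q)^2$: the exponential factor is harmless ($d(E,F)\ge0$), and the prefactor $t^{-\frac n2(\frac1{p_0}-\frac1{q_0})}\approx\ell(Q)^{-n(\frac1{p_0}-\frac1{q_0})}$ is exactly what converts the absolute $L^{q_0}(2Q)$ and $L^{p_0}(4Q)$ norms into the normalized averages $\big(\aver{2Q}\cdots\big)^{1/q_0}\lesssim\big(\aver{4Q}\cdots\big)^{1/p_0}$, using $|2Q|\approx|4Q|\approx\ell(Q)^n$. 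For the off-diagonal part \eqref{AQ:off:p-q} with $j\ge1$, decompose $f\chi_{\re^n\setminus2^{j+1}Q}=\sum_{k\ge2}f\chi_{2^{k+j+1}Q\setminus2^{k+j}Q}$; apply \eqref{ass:offdiag} to each annular piece with $E=2^jQ$, $F=2^{k+j+1}Q\setminus2^{k+j}Q$, so $d(E,F)\gtrsim2^{k+j}\ell(Q)$ and $t\approx\ell(Q)^2$, giving an exponential factor $e^{-c4^{k+j}}$; after normalizing the averages (the volume ratio $|2^{k+j}Q|/|2^jQ|=2^{kn}$ contributes a polynomial factor $2^{kn/p_0}$ which is absorbed by the exponential) one reads off $\alpha_{k+j}=C\,e^{-c\,4^{k+j}}$, i.e. $\alpha_k=C\,e^{-c\,4^k}$.

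Condition $(d)$ is the main obstacle, since it is the one place where one must genuinely exploit the \emph{cancellation} in the composition $B_R A_Q$ rather than just decay. The idea is to use $A_{Q,N}=\Id-B_{Q,N}$ again and write $B_{R,N}A_{Q,N}$ as a linear combination of operators $\big(\Id-e^{-\ell(R)^2L}\big)^N e^{-j\,\ell(Q)^2L}$, $1\le j\le N$. The key algebraic point is that $\Id-e^{-sL}=\int_0^s L\,e^{-uL}\,du$, so $\big(\Id-e^{-\ell(R)^2L}\big)^N e^{-j\ell(Q)^2L}$ can be rewritten, via the fundamental theorem of calculus iterated $N$ times, as $\int_{[0,\ell(R)^2]^N}(L^N e^{-(u_1+\dots+u_N+j\ell(Q)^2)L})\,du_1\cdots du_N$, which up to the factor $(u_1+\dots+u_N+j\ell(Q)^2)^N\ge(j\ell(Q)^2)^N\gtrsim\ell(Q)^{2N}$ in the denominator is an average of operators $(tL)^N e^{-tL}$ with $t$ ranging over $[\,j\ell(Q)^2,\ N\ell(R)^2+j\ell(Q)^2\,]\subset[\,\ell(Q)^2,\ c\,\ell(Q)^2\,]$ since $R\subset Q$ forces $\ell(R)\le\ell(Q)$. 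Thus $B_{R,N}A_{Q,N}$ is controlled, pointwise on $2R$, by a bounded average (in $t\approx\ell(Q)^2$) of $|(tL)^N e^{-tL}f|$ weighted by $\ell(R)^{2N}/t^N\le1$; here the hypothesis $N\ge n/(2q_0)$ guarantees that the extra smallness $(\ell(R)/\ell(Q))^{2N}$ more than compensates the volume growth $(\ell(Q)/\ell(R))^{n/q_0}$ coming from passing from an average over $2R$ at scale $\ell(R)$ to an average at scale $\ell(Q)$ — precisely, $2N\ge n/q_0$. Then apply the off-diagonal estimates for $\{(tL)^N e^{-tL}\}$ exactly as in the proof of $(c)$: decompose $f=f\chi_{4Q}+\sum_{k\ge2}f\chi_{2^{k+1}Q\setminus2^kQ}$, estimate the local piece by \eqref{ass:offdiag} at scale $t\approx\ell(Q)^2$ on $E=2R\subset2Q$, $F=4Q$, and the annular pieces with $d(2R,2^{k+1}Q\setminus2^kQ)\gtrsim2^k\ell(Q)$ to produce $\beta_{k+1}=C\,e^{-c\,4^{k+1}}$, i.e. $\beta_k=C\,e^{-c\,4^k}$; summing, shifting indices and renaming constants yields \eqref{BR-AQ:off:p-q}. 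Having verified $(a)$–$(d)$, the family $\mathbb{B}_N$ is $O(p_0,q_0)$ by definition, completing the proof; the decay coefficients are $\alpha_k=\beta_k=C\,e^{-c\,4^k}$ as claimed, and these are certainly ``fast decay.''
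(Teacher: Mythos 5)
Your plan follows essentially the same route as the paper: $(a)$ by the semigroup law, $(b)$ from the off-diagonal estimates, $(c)$ by expanding $A_{Q,N}$ and applying \eqref{ass:offdiag} on annuli, and $(d)$ by extracting a factor $(\ell(R)/\ell(Q))^{2N}$ from the identity $\Id-e^{-sL}=\int_0^s L e^{-uL}\,du$ and then using off-diagonal estimates for $(tL)^N e^{-tL}$ at scale $t\approx\ell(Q)^2$, with $2N\ge n/q_0$ compensating the volume ratio. The only organizational difference in $(d)$ is cosmetic: you keep the $N$-fold integral $\int_{[0,\ell(R)^2]^N}L^N e^{-(\sum u_i+j\ell(Q)^2)L}\,du$ as a single object and apply Minkowski's inequality, whereas the paper factors $S_{Q,R}=(\ell(R)/\sqrt{l}\ell(Q))^{2N}(l\ell(Q)^2L)^N e^{-l\ell(Q)^2L}\,U_{\ell(R)^2}$ with $U_s=\big(\aver{[0,s]}e^{-\lambda L}\,d\lambda\big)^N$ and shows separately (Lemma \ref{lemma:Us}) that $U_s$ satisfies $L^{p_0}$ off-diagonal estimates. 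Both are correct and lead to the same bound; yours is marginally shorter while the paper's decoupling makes the stability-under-composition cleaner.

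One step in your plan for $(b)$ is incorrectly justified: taking $E=F=\re^n$ in \eqref{ass:offdiag} yields $\|e^{-tL}\|_{L^{p_0}\to L^{q_0}}\lesssim t^{-\frac n2(\frac1{p_0}-\frac1{q_0})}$, which blows up as $t\to0^+$ and in any case is not an $L^{p_0}\to L^{p_0}$ bound, so it does not directly give $\|e^{-tL}\|_{L^{p_0}\to L^{p_0}}\lesssim1$. The uniform $L^{p_0}$ bound does hold, but the standard route (which the paper cites as \cite[Theorem 2.3]{AM2}) is to pass from $L^{p_0}$--$L^{q_0}$ off-diagonal estimates to $L^{p_0}$--$L^{p_0}$ off-diagonal estimates (partition $\re^n$ into balls of radius $\sqrt t$, use \eqref{ass:offdiag} on each pair, and sum the exponential tails), and then observe that the exponential decay makes the kernel of the induced Schur-type sum bounded. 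The strong continuity hypothesis is not what produces this uniform bound; it is needed instead to validate the identity $\Id-e^{-sL}=\int_0^s Le^{-uL}\,du$ on $L^{p_0}$, which you invoke in $(d)$.
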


\begin{remark}\label{rem:continuity}
By \cite[Proposition 4.4]{AM2} and since $\{e^{-t\,L}\}_{t\geq 0}$ satisfies $L^{p_0}(\re^n)-L^{q_0}(\re^n)$ off-diagonal estimates, we have that if there exists $p\in[p_0,q_0]$, $p<\infty$, such that $\{e^{-t\,L}\}_{t\geq 0}$ is strongly continuous on $L^p(\re^n)$, then it is strongly continuous on $L^r(\re^n)$ for all $r\in[p_0,q_0]$, $r<\infty$. In particular, for every function $f\in L^r(\re^n)$, $e^{-tL}f \longrightarrow f$ in $L^r(\re^n)$ as $t\to 0^+$.
\end{remark}

\begin{remark} \label{rem:}
Let us note that in place of the semigroup, we can produce similar arguments with more general operator $L$ ``of order $m$'' by considering
$$ B_{Q,N}:=\left(\Id-e^{-\ell(Q)^m L}\right)^{N}$$ or
the resolvants $ A_{Q,N}:=\left(\Id+\ell(Q)^mL\right)^{-N},$ provided they satisfy the previous off-diagonal estimates.
\end{remark}

\subsection{Second order divergence form elliptic  operators}\label{section:applications-semi} \label{subsec:appli}

We refer the reader to \cite{Aus} for the particular case of second order divergence form elliptic  operators $L$:
$$
Lf(x)= -\div( A(x) \nabla f(x))$$
with a bounded $n \times n$ matrix valued function $ A: \R^n \longrightarrow {\mathcal M}_{n}({\mathbb C}) $ satisfying the following ellipticity condition: there exist two constants $\Lambda\geq \lambda>0$ such that
$$
\lambda\,|\xi|^2
\le
{\rm Re}\, A(x)\,\xi\cdot\bar{\xi}
\quad\qquad\mbox{and}\qquad\quad
|A(x)\,\xi\cdot \bar{\zeta}|
\le
\Lambda\,|\xi|\,|\zeta|,
$$
for all $\xi,\zeta\in\co^n$ and almost every $x\in \re^n$. We have
used the notation
$\xi\cdot\zeta=\xi_1\,\zeta_1+\cdots+\xi_n\,\zeta_n$ and therefore
$\xi\cdot\bar{\zeta}$ is the usual inner product in $\co^n$.

The operator $-L$ generates a $C^0$-semigroup $\{e^{-t\,L}\}_{t>0}$ of contractions on $L^2(\re^n)$. There exist $p_-$, $p_+$, with $1\le p_-<2<p_+\le\infty$, such that $\{e^{-t\,L}\}_{t>0}$ ---and also $\{(t\,L)^k\,e^{-t\,L}\}_{t>0}$, $k\ge 1$--- satisfies $L^{p_0}(\re^n)-L^{q_0}(\re^n)$ off-diagonal estimates for all $p_-<p_0\le q_0<p_+$. It is also strongly continuous on $L^2(\re^n)$. Consequently, Proposition \ref{prop:off-semi} applies and this allows us to obtain the following result from our main result Theorem \ref{theorem:Lp} (and also from Corollary \ref{corollary:Lp-strong}):

\begin{corollary}\label{corol:semigroup-selfimpro}
Let $L$ be a second order divergence form elliptic  operator as above. Fix $N>n/(2\,p_+)$ and $p$, $q$ such that $p_-<p<q<p_+$. Given $f\in\mathcal{F}$ and a functional $a$, we assume that for every $k\ge 0$ and every cube $Q$,
\begin{equation}\label{hyp:BQ:2k-Q-semi}
\bigg(\aver{2^k\,Q} \big|\big(\Id-e^{-\ell(Q)^2 L }\big)^{N}f(x)\big|^p\,dx\bigg)^{1/p}
\le a(2^k\,Q).
\end{equation}
If we define $\tilde{a}$ as in \eqref{eq:tilde} with $\tilde{\gamma}_k=C\,e^{-c\,4^k}$ \textup{(}cf. Remark \ref{remark:sequences}\textup{)} and $\tilde{a}\in D_q$, we have that
$$
\big\| \big(\Id-e^{-\ell(Q)^2 L }\big)^{N}f\|_{L^{q,\infty},Q}
\lesssim \tilde{a}(2\,Q).
$$
Consequently, for every $p<r<q$,
$$
\bigg(\aver{Q} \big|\big(\Id-e^{-\ell(Q)^2 L }\big)^{N}f(x)\big|^r\,dx\bigg)^{1/r}
\lesssim \tilde{a}(2\,Q).
$$
\end{corollary}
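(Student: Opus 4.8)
The plan is to recognize the oscillations $(\Id-e^{-\ell(Q)^2L})^N$ as a family that is $O(p_0,q_0)$ for a suitably chosen pair of exponents and then to feed everything into Theorem \ref{theorem:Lp} and Corollary \ref{corollary:Lp-strong}. First I would fix the parameters. Since $N>n/(2\,p_+)$ and $q<p_+$, and since $p_-<2<p_+$, I can choose $q_0$ with $\max\{q,2\}<q_0<p_+$ close enough to $p_+$ so that $N\ge n/(2\,q_0)$, and choose $p_0$ with $p_-<p_0<\min\{p,2\}$; this yields $p_-<p_0<p<q<q_0<p_+$ together with $2\in(p_0,q_0)$.

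Next I would invoke the known properties of second order divergence form elliptic operators recalled above: the families $\{e^{-t\,L}\}_{t>0}$ and $\{(t\,L)^k\,e^{-t\,L}\}_{t>0}$ for $0\le k\le N$ satisfy $L^{p_0}(\re^n)-L^{q_0}(\re^n)$ off-diagonal estimates (because $p_-<p_0\le q_0<p_+$) and $\{e^{-t\,L}\}_{t>0}$ is strongly continuous on $L^2(\re^n)$, with $2\in[p_0,q_0]$. Since moreover $N\ge n/(2\,q_0)$, Proposition \ref{prop:off-semi} applies and gives that $\mathbb{B}_N=\{B_{Q,N}\}_{Q\in\Qcal}$ is $O(p_0,q_0)$ with $\alpha_k=\beta_k=C\,e^{-c\,4^k}$. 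To then apply Theorem \ref{theorem:Lp} with $B_Q:=B_{Q,N}$ I would check its hypothesis \eqref{hyp:BQ:2k-Q}: since $p_0<p$, H\"older's inequality applied to \eqref{hyp:BQ:2k-Q-semi} gives $(\aver{2^kQ}|B_{Q,N}f|^{p_0})^{1/p_0}\le(\aver{2^kQ}|B_{Q,N}f|^{p})^{1/p}\le a(2^kQ)$ for all $k\ge0$ and all cubes $Q$, which is exactly \eqref{hyp:BQ:2k-Q}.

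With these ingredients in place, Theorem \ref{theorem:Lp} produces fast decay coefficients $\{\tilde{\gamma}_k\}_k$ that are controlled by the sequences $\alpha_k,\beta_k$ coming from $O(p_0,q_0)$; since those decay like $e^{-c\,4^k}$, one may take $\tilde{\gamma}_k=C\,e^{-c\,4^k}$ as claimed (cf. Remark \ref{remark:sequences}). Under the assumption $\tilde{a}\in D_q$ with $\tilde{a}$ defined by \eqref{eq:tilde}, Theorem \ref{theorem:Lp} gives the weak-type bound $\|(\Id-e^{-\ell(Q)^2L})^Nf\|_{L^{q,\infty},Q}\lesssim\tilde{a}(2\,Q)$. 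For the strong-type conclusion I would observe that $dx/|Q|$ is a probability measure on $Q$, so Kolmogorov's inequality (equivalently Corollary \ref{corollary:Lp-strong}) upgrades the weak-$L^q$ estimate to an $L^r$ estimate for every $p<r<q$, with the same right-hand side $\tilde{a}(2\,Q)$.

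The one point that requires genuine care is the parameter selection in the first step: one must verify that $N\ge n/(2\,q_0)$ can actually be achieved for some $q_0$ strictly between $q$ and $p_+$ (this uses the strict inequality $N>n/(2\,p_+)$ and the openness of $(p_-,p_+)$) and that this same $q_0$, together with a valid choice of $p_0$, keeps $2$ inside $[p_0,q_0]$ so that the strong continuity hypothesis needed by Proposition \ref{prop:off-semi} is available. Once the exponents are pinned down, the remainder is a direct chain of citations, with only H\"older's and Kolmogorov's inequalities added by hand.
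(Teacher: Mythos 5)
Your argument is correct and follows the same route the paper has in mind: invoke Proposition \ref{prop:off-semi} to place $\mathbb{B}_N$ in $O(p_0,q_0)$, verify \eqref{hyp:BQ:2k-Q}, and then cite Theorem \ref{theorem:Lp} and Kolmogorov's inequality (Corollary \ref{corollary:Lp-strong}). The only cosmetic difference is that the paper implicitly uses $p_0=p$ (so the hypothesis \eqref{hyp:BQ:2k-Q-semi} matches \eqref{hyp:BQ:2k-Q} directly, with strong continuity on $L^p$ supplied by Remark \ref{rem:continuity}), whereas you take $p_0<\min\{p,2\}$ and insert a Jensen/H\"older step, which has the small advantage of placing $2$ inside $[p_0,q_0]$ so that strong continuity on $L^2$ applies verbatim.
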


\begin{remark}\label{remark:semigroup-selfimpro}
Notice that in a similar way we can obtain versions of Theorems \ref{theorem:Lp:two-functionals} and \ref{theorem:Lp-w}. We also observe that as mentioned in Remark \ref{remark:side-Q} if $a\in D_{p}$ then it suffices to assume \eqref{hyp:BQ:2k-Q-semi} only in the case $k=0$.
\end{remark}

Next, we are going to apply Corollary \ref{corol:semigroup-selfimpro} and the results mentioned in the previous remark to the functionals we have considered above.

\subsubsection{John-Nirenberg inequality for $BMO$ and Lipschitz spaces associated to $L$}

Given $\alpha\ge 0$, and $N>n/(2\,p_+)$ we take $\mathcal{F}:=\mathbb{M}_{\alpha,L}^{N,*}$ where $\mathbb{M}_{\alpha,L}^{N,*}=\cap_{\epsilon>0}\big(\mathbb{M}_{\alpha,L}^{\epsilon,N}\big)^*$  and $\mathbb{M}_{\alpha,L}^{\epsilon,N}$ is the collection of $f\in L^2(\re^n)$ such that $f$ is in the range of $L^k$ in $L^2(\re^n)$, $k=1,\dots,N$ and
$$
\|f\|_{\mathbb{M}_{\alpha,L}^{\epsilon,N}}
=
\sup_{j\ge 0} 2^{j\,(\frac{n}2+\alpha+\epsilon)}\sum_{k=0}^N \|L^{-k} f\|_{L^2(S_j(Q_0))}<\infty.
$$
Here $Q_0$ is the unit cube centered at the origin, $S_0(Q_0)=Q_0$ and $S_j(Q_0)=2^j\,Q_0\setminus 2^{j-1}\,Q_0$, $j\ge 1$. It is shown in \cite{Hof-May}, \cite{Hof-May-Mc} that  $\big(\Id-e^{-\ell(Q)^2 L }\big)^{N}f$ is globally well defined in the sense of distributions and also that $\big(\Id-e^{-\ell(Q)^2 L }\big)^{N}f\in L^2_{\rm loc}(\re^n)$  for every $f\in\mathcal{F}$. This means that according to Remark \ref{remark:distributions} we can apply our main results, and in particular, Corollary \ref{corol:semigroup-selfimpro} holds with this family  $\mathcal{F}$.

Given $p_-<p<p_+$, we say that an element $f\in \F$ belongs to $\Lambda_L^{\alpha,p}$ if
$$
\|f\|_{\Lambda_L^{\alpha,p}}
:=
\sup_{Q\subset\re^n}|Q|^{-\alpha/n}\bigg(\aver{Q} \big|\big(\Id-e^{-\ell(Q)^2 L }\big)^{N}f(x)\big|^p\,dx\bigg)^{1/p}<\infty.
$$
Let us notice that when $\alpha=0$ we write as usual $\Lambda_L^{0,p}=BMO_L^p$. For the particular case $p_-=1$, $p_+=\infty$, \cite{DY-CPAM} and \cite{DDY} introduced these spaces obtaining the corresponding John-Nirenberg (i.e., exponential integrability) estimates, see \cite{Jim}, \cite{JM} and \cite{BJM} for a different approach. The general case with $p=2$ and $\alpha=0$ was studied in \cite{Hof-May}, in which case the corresponding space is denoted by $BMO_L$, and it was shown that $BMO_L^p=BMO_L$ for every $p_-<p<p_+$. This John-Nirenberg inequality is obtained using duality and the Carleson measure characterization of the space $BMO_L$. It was also shown that $BMO_L$ is the dual of the Hardy space $H^1_{L^*}$. For $\alpha>0$ and $p=2$, \cite{Hof-May-Mc} establishes that $\Lambda_L^{\alpha}=\Lambda_L^{\alpha,2}$ is the dual of the Hardy space $H^s_{L^*}$ for $0<s<1$ and $\alpha=n(1/s-1)$.

Using Corollary \ref{corol:semigroup-selfimpro} we are going to show that the spaces $\Lambda_L^{\alpha,p}$ are independent of $p$ and in fact that for every $p_-<p<p_+$ we have the following John-Nirenberg inequality
$$
\|f\|_{\Lambda_L^{\alpha,p}}
\approx \|f\|_{\Lambda_L^{\alpha}}.
$$
Therefore our techniques allow us to give a direct proof of the John-Nirenberg inequality that reproves the result in \cite{Hof-May} when $\alpha=0$ without using any characterization or duality properties of the space $BMO_L$. For $\alpha>0$, our John-Nirenberg inequality is new.

We can combine this with \cite[Theorem 3.52]{Hof-May-Mc}, describing the duality between Hardy spaces $H_L^p$ and Lipschitz spaces:
$$
(H_{L^*}^s)^* = \Lambda_L^{\alpha,p}, \qquad 0<s<1,\quad\alpha=n(1/s-1).
$$
As just  discussed, this relation does not depend on $p\in(p_-,p_+)$. By this way, it follows that the Hardy space $H_{L^*}^s$ can be built on $(H^s_{L^*}, p,\epsilon, N)$-molecules (as defined in \cite[(3.6)]{Hof-May-Mc}), as long as $p\in(p_+',p_-')$ ---notice that $p_+'=p_-(L^*)$ and $p_-'=p_+(L^*)$. Hence, the Hardy space $H_{L^*}^s$ does not depend on this exponent $p$ (as stated without proof in \cite{Hof-May-Mc}).

We explain how to obtain the claimed estimate from Corollary \ref{corol:semigroup-selfimpro}. Notice that it suffices to show that for every $p_-<p_0\le 2\le q_0<p_+$ we have
$$
\|f\|_{\Lambda_L^{\alpha,p_0}}
\le
\|f\|_{\Lambda_L^{\alpha,q_0}}
\le
C\,\|f\|_{\Lambda_L^{\alpha,p_0}}
.
$$
The first estimate is trivial by Jensen's inequality. In order to obtain the second estimate we fix $f\in \F$, and we can assume by homogeneity that $\|f\|_{\Lambda_L^{\alpha,p_0}}=1$. Next we take
$a(Q)=|Q|^{\alpha/n}$ and by Example \ref{ex:BMO} we know that $a\in D_\infty$ and consequently $a$ belongs to $D_r$ for every $1< r<\infty$. Also, since $a$ is doubling we have that $\tilde{a}\approx a$. By Remark \ref{remark:semigroup-selfimpro} it suffices to check that \eqref{hyp:BQ:2k-Q-semi} holds for $k=0$, and this is nothing but the fact that $\|f\|_{\Lambda_L^{\alpha,p_0}}=1$. Thus, Corollary \ref{corol:semigroup-selfimpro} gives right away the desired estimate. We would like to point out that, as explained above, $\F$ is a family of distributions such that  $\big(\Id-e^{-\ell(Q)^2 L }\big)^{N}f$ is globally well defined in the sense of distributions and belongs to $L^2_{\rm loc}(\re^n)\subset L^{p_0}_{\rm loc}(\re^n)$ since $p_0\le 2$. Therefore, Remark \ref{remark:distributions} applies.

\subsubsection{Fractional averages}
Given $u\in A_\infty$, $0<\alpha<n$, $p_-<p<p_+$, and $N>n/(2\,p_+)$, let $f\in L^{p}_{\rm loc}(\re^n)$ be such that, for every cube,
$$
\bigg(\aver{Q} \big|\big(\Id-e^{-\ell(Q)^2 L }\big)^{N}f(x)\big|^p\,dx\bigg)^{1/p}
\le
\ell(Q)^\alpha\, {\bigg(\frac{u(Q)}{|Q|}\bigg)}^{1/s}=:a(Q).
$$
One could also take $f\in\F$ as above but for simplicity we prefer to work with functions.
Note that $a$ is doubling since so is $u$, therefore $\tilde{a}\approx a$.
Then from Corollary \ref{corol:semigroup-selfimpro} and proceeding as before we can conclude the following:
$$
\bigg(\aver{Q} \big|\big(\Id-e^{-\ell(Q)^2 L }\big)^{N}f(x)\big|^q\,dx\bigg)^{1/q}
\lesssim
\ell(Q)^\alpha\, {\bigg(\frac{u(Q)}{|Q|}\bigg)}^{1/s},
$$
in any of the following situations
\begin{list}{$(\theenumi)$}{\usecounter{enumi}\leftmargin=.8cm
\labelwidth=.8cm\itemsep=0.2cm\topsep=.1cm
\renewcommand{\theenumi}{\alph{enumi}}}

\item $s\ge n/\alpha$, $p_-<p<q<p_+$;

\item $1\le s< n/\alpha$, $p_+\le \frac{s\,n}{n-\alpha\,s}$, $p_-<p<q<p_+$;

\item $1\le s< n/\alpha$,  $p_-<p<q\le \frac{s\,n}{n-\alpha\,s} <p_+$;

\end{list}

By Example \ref{ex:fract} we know that $a\in D_\infty$ in case $(a)$, consequently $a\in D_r$ for every $1< r<\infty$, and $a\in D_{\frac{s\,n}{n-\alpha\,s}+\epsilon}$ for some $\epsilon>0$ depending on $u$ in cases $(b)$, $(c)$. By Remark \ref{remark:semigroup-selfimpro} it suffices to check that \eqref{hyp:BQ:2k-Q-semi} holds for $k=0$, and this what we have assumed. Consequently, Corollary \ref{corol:semigroup-selfimpro} gives the desired estimates.

\subsubsection{Expanded Poincar\'e inequalities}

Given $p_-<p<p_+$, and $N>n/(2\,p_+)$, let $f\in L^{p}_{\rm loc}(\re^n)$ and let $0\le h\in L^s_{\rm loc}$ be such that, for every cube,
\begin{equation}\label{eqn:hypo-EPI}
\bigg(\aver{Q} \big|\big(\Id-e^{-\ell(Q)^2 L }\big)^{N}f\big|^p\,dx\bigg)^{1/p}
\le
\sum_{j=0}^\infty \eta_j\,
\ell(2^j\,Q)\,\left(\aver{2^j Q} h^s\, dx\right)^{1/s}=:a(Q),
\end{equation}
where $\{\eta_j\}_{j\ge 0}$ is a fast decay sequence (typical examples are $\eta_j=C\, 2^{-j\,\sigma}$ with $\sigma>0$ or $\eta_j=C\,e^{-c\,4^j}$).

If $s\ge n$ we have observed above that $a\in D_\infty$ and consequently $\tilde{a}\in D_\infty$, hence $a, \tilde{a}\in D_r$ for every $1< r<\infty$.
By Remark \ref{remark:semigroup-selfimpro} we obtain that \eqref{hyp:BQ:2k-Q-semi} holds for all $k\ge 0$. Then  Corollary \ref{corol:semigroup-selfimpro} gives for every $p_-<p<q<p_+$,
$$
\bigg(\aver{Q} \big|\big(\Id-e^{-\ell(Q)^2 L }\big)^{N}f\big|^q\,dx\bigg)^{1/q}
\le
\sum_{j=2}^\infty \tilde{\eta}_{j-1}\,\ell(2^{j}\,Q)\,\left(\aver{2^j Q} h^s\, dx\right)^{1/s},
$$
where $\tilde{\eta}_j=C\,\sum_{k=1}^{j}e^{-c\,4^k}\,\eta_{j-k}$, $j\ge 1$.

If $1\le s<n$, for simplicity, we assume that $\eta_1>0$ and also that $\eta_k$ is quasi-decreasing, i.e., $\eta_k\le C\, \eta_j$ for every $j\le k$. Notice that we can always replace $\eta_k$ by $\sup_{j\ge k} \eta_k$ and this new sequence is decreasing and satisfies $\eta_1>0$. This happens unless $a(Q)$ is only defined by the term $j=0$ which goes back to the reduced Poincar\'e inequalities which as shown above satisfies $a\in D_s$ and therefore the following lemma holds, details are left to the reader. We need the following auxiliary result which says that (modulo a multiplicative constant) \eqref{hyp:BQ:2k-Q-semi} follows from the case $k=0$ which is our assumption.
\begin{lemma}\label{lemma:Poincare-Hyp-k}
Given $a$ as above, let us assume that $1\le p\le s<n$. Then for every $k\ge 0$, we have
$$
\bigg(\aver{2^k\,Q} \big|\big(\Id-e^{-\ell(Q)^2 L }\big)^{N}f\big|^p\,dx\bigg)^{1/p}
\le
C\,a(2^k\,Q).
$$
\end{lemma}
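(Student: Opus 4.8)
Set $B_Q:=\big(\Id-e^{-\ell(Q)^2L}\big)^{N}$ and note that $B_Q$ depends only on $\ell(Q)$. The plan is to deduce the estimate on $2^kQ$ from the assumed case $k=0$ at the scale $\ell(Q)$. Split $2^kQ$ into its $2^{kn}$ pairwise disjoint congruent subcubes $\{R_i\}_i$ of sidelength $\ell(Q)$; then $B_Q=B_{R_i}$ (as $\ell(R_i)=\ell(Q)$), so \eqref{eqn:hypo-EPI} applied to the cube $R_i$ gives $\aver{R_i}|B_Qf|^p\,dx\le a(R_i)^p$, hence
$$
\Big(\aver{2^kQ}|B_Qf|^p\,dx\Big)^{1/p}\le 2^{-kn/p}\Big(\sum_i a(R_i)^p\Big)^{1/p}.
$$
Everything reduces to proving $\sum_i a(R_i)^p\lesssim 2^{kn}\,a(2^kQ)^p$ with a constant independent of $k$, $Q$, $f$, $h$.

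Write $b(R):=\ell(R)\big(\aver{R}h^s\,dx\big)^{1/s}$, so $a(R)=\sum_{j\ge0}\eta_j\,b(2^jR)$, and recall from \cite{Franchi-Perez-Wheeden} that $b\in D_{s^*}\subset D_s\subset D_p$ since $1\le p\le s\le s^*$. I would \emph{not} expand $a(R_i)^p$ via Jensen's inequality (that is far too lossy, because $a(R_i)$ is typically dominated by a single term $\eta_j\,b(2^jR_i)$); instead I would use Minkowski's inequality in $\ell^p(\{i\})$ ---this is where $p\ge1$ enters---
$$
\Big(\sum_i a(R_i)^p\Big)^{1/p}\le\sum_{j\ge0}\eta_j\Big(\sum_i b(2^jR_i)^p\Big)^{1/p},
$$
and then estimate $\sum_i b(2^jR_i)^p$ by distinguishing the ranges $j\ge k$ and $j<k$.

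For $j\ge k$ the dilate $2^jR_i$ is larger than $2^kQ$, so $2^jR_i\subset 2^{j+1}Q$ and a crude volume comparison gives $b(2^jR_i)\lesssim b(2^{j+1}Q)$ uniformly in $i$; hence $\big(\sum_i b(2^jR_i)^p\big)^{1/p}\lesssim 2^{kn/p}\,b(2^{j+1}Q)$. For $j<k$ the cubes $2^jR_i$ are small and heavily overlapping, and $b(2^jR_i)$ is \emph{not} controlled by $b$ on a fixed dilate of $Q$; this is where $p\le s$ is used. One replaces each $2^jR_i$ by a nearby cube of a fixed tiling $\mathcal G_j$ of sidelength $2^j\ell(Q)$, passes to that disjoint family using the subadditivity of $t\mapsto t^{p/s}$, accounts for the bounded overlap ($\lesssim 2^{jn}$ tiles meet a given $2^jR_i$, and each tile is hit by $\lesssim 2^{jn}$ indices $i$), and finally applies $b\in D_p$ on a fixed dilate $2^{k+C}Q$ to obtain $\sum_i b(2^jR_i)^p\lesssim 2^{kn}\,b(2^{k+C}Q)^p$. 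Collecting the two ranges, $2^{-kn/p}\big(\sum_i a(R_i)^p\big)^{1/p}\lesssim b(2^{k+C}Q)\sum_{j<k}\eta_j+\sum_{j\ge k}\eta_j\,b(2^{j+1}Q)$.

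It then remains to compare both pieces with $a(2^kQ)=\sum_{l\ge0}\eta_l\,b(2^{l+k}Q)$. Since $a(2^kQ)\ge\eta_C\,b(2^{k+C}Q)$ with $\eta_C>0$ (here the hypothesis $\eta_1>0$ is needed), the first piece is $\lesssim a(2^kQ)$. For the second, the substitution $m=j+1-k$ turns $\sum_{j\ge k}\eta_j\,b(2^{j+1}Q)$ into $\sum_{m\ge1}\eta_{m+k-1}\,b(2^m\cdot 2^kQ)$, and the quasi-decreasing hypothesis gives $\eta_{m+k-1}\le C\,\eta_m$ for all $m\ge1$ when $k\ge1$, so this sum is $\le C\sum_{m\ge1}\eta_m\,b(2^m\cdot 2^kQ)\le C\,a(2^kQ)$; the case $k=0$ is \eqref{eqn:hypo-EPI} itself. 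The main obstacle is precisely this last bookkeeping: one cannot afford the lossy Jensen bound, so the $\eta_j$-weighted structure must be carried through Minkowski's inequality, and it is only the quasi-decreasing property of $\{\eta_j\}$ that makes the resulting two $\eta$-sums comparable ---which is exactly why that property, together with $\eta_1>0$, is imposed in the statement.
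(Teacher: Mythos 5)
Your proposal is essentially the paper's proof: the same subdivision of $2^kQ$ into $2^{kn}$ congruent cubes, Minkowski's inequality to pull out the $\eta_j$--sum, the overlap bound $\sum_i\chi_{2^jQ_i}\lesssim 2^{n\min\{j,k\}}\chi_{2^{\max\{j,k\}+1}Q}$ from \cite[Lemma 4.3]{JM}, and the same final bookkeeping via $\eta_1>0$ and quasi-monotonicity. The only (cosmetic) difference is that the paper works at exponent $s$ throughout --- it establishes the clean intermediate claim \eqref{eqn:Ds-equal-size}, $\big(\sum_i a(Q_i)^s|Q_i|\big)^{1/s}\lesssim a(2^kQ)|2^kQ|^{1/s}$, and then passes to $L^p$ by a single H\"older step using $p\le s$ --- whereas you work directly at exponent $p$, invoking the subadditivity of $t\mapsto t^{p/s}$ and $a_0\in D_p$ inside the $j<k$ case, which yields the same estimate after rearrangement. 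Two small slips worth fixing: a cube $2^jR_i$ of sidelength $2^j\ell(Q)$ meets $\lesssim 2^n$ (not $2^{jn}$) tiles of the same-size tiling (the $2^{jn}$ count is for how many indices $i$ hit a fixed tile, which is the one you actually use); and for $j<k$ the dilates land in $2^{k+1}Q$, so your ``$C$'' should be $1$ --- the conclusion $b(2^{k+1}Q)\lesssim a(2^kQ)$ then follows directly from $\eta_1>0$, without needing positivity of some unspecified $\eta_C$.
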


Next using that $\tilde{\gamma}_k=C\,e^{-c\,4^k}$ and we can compute $\tilde{a}$:
$$
\tilde{a}(Q)
=
C\,
\sum_{j=1}^\infty e^{-c\,4^j}\,a(2^j\,Q)
=
\sum_{j=1}^\infty \tilde{\eta}_j\,
\ell(2^j\,Q)\,\left(\aver{2^j Q} h^s\, dx\right)^{1/s}
$$
where $\tilde{\eta}_j=C\,\sum_{k=1}^j e^{-c\,4^k}\,\eta_{j-k}$.
As observed above Lemma \ref{lemma:D-q:EPI} gives that $(\tilde{a},\bar{a})\in D_q$, for $1\le q<s^*$, where
$$
\bar{a}(Q)=\sum_{j=1}^\infty
\bar{\gamma}_j\, \ell(2^j\,Q)\, \left(\aver{2^j\,Q} h^s\, dx\right)^{1/s},
$$
and $\bar{\gamma}_j=2^{-j\,n\,(\frac1s-\frac1q)^+}\,\sum_{l=j-1}^\infty \tilde{\eta}_l\,2^{l\,n\,(\frac1s-\frac1{q})^+}$ (for $\bar{\gamma}_1$ we set $\tilde{\eta}_0=0$).
Using all these and Remark \ref{remark:semigroup-selfimpro} we obtain that \eqref{eqn:hypo-EPI} implies
\begin{equation}\label{eqn:concl-EPI}
\bigg(\aver{Q} \big|\big(\Id-e^{-\ell(Q)^2 L }\big)^{N}f\big|^q\,dx\bigg)^{1/q}
\lesssim
\bar{a}(2Q)
=
\sum_{j=2}^\infty
\bar{\gamma}_{j-1}\, \ell(2^j\,Q)\, \left(\aver{2^j\,Q} h^s\, dx\right)^{1/s},
\end{equation}
in any of the following situations:
\begin{list}{$(\theenumi)$}{\usecounter{enumi}\leftmargin=.8cm
\labelwidth=.8cm\itemsep=0.2cm\topsep=.1cm
\renewcommand{\theenumi}{\alph{enumi}}}

\item $p_-<p\le s< n$, $p_+\le s^*$, $p_-<p<q<p_+$;

\item $p_-<p\le s< n$, $p_-<p<q<s^*<p_+$;

\end{list}

For instance, if $\eta_j=C\,2^{-\sigma\,j}$ with $\sigma>0$, then it is easy to see that  $\tilde{\eta}_j\approx \eta_j$ for $j\ge 1$ and thus $\bar{\gamma}_j\approx 2^{-j\,\sigma}$ provided $\sigma>n(1/s-1/q)^+$. If $\eta_j=C\,e^{-c\,4^j}$ then $\tilde{\eta}_j\lesssim e^{-c'\,2^j}$ and then we can take $\bar{\gamma}_j\approx e^{-c''\,2^j}$.

As observed in \cite[Section 4.3]{JM} the previous estimates give some global pseudo-Poincar\'e inequalities. Indeed we can easily follow the computations there to obtain that our initial assumption  \eqref{eqn:hypo-EPI} (which yields \eqref{eqn:concl-EPI}) implies that, for all $t>0$,
$$
\big\|\big(\Id-e^{-t L }\big)^{N}f\big\|_{L^q(\re^n)}
\lesssim
t^{\frac{n}{2}\,(\frac1q-\frac1{s^*})}\,\|h\|_{L^s(\re^n)},
$$
provided $p$, $q$, $s$ satisfy either $(a)$ or $(b)$, $s\le q$ and $\{\bar{\gamma}_{j}\,2^j\}_{j\ge 1}\in\ell^1$.

Next, we see that \eqref{eqn:hypo-EPI} (which is an unweighted estimate) gives some generalized weighted Poincar\'e inequalities. Assume that \eqref{eqn:hypo-EPI} holds and take $w\in A_1$. Then we easily obtain
$$
\bigg(\aver{Q} \big|\big(\Id-e^{-\ell(Q)^2 L }\big)^{N}f\big|^p\,dx\bigg)^{1/p}
\lesssim
\sum_{j=0}^\infty \eta_j\,
\ell(2^j\,Q)\,\left(\aver{2^j Q} h^s\, dw\right)^{1/s}=:a_w(Q).
$$
Notice that we indeed have $a(Q)\lesssim a_w(Q)$.

As observed above, if $s\ge n$ then $a_w\in D_\infty$. Then proceeding as above we obtain that the weighted version of
 Corollary \ref{corol:semigroup-selfimpro} gives for every $p_-<p<q<p_+$ and every $w\in A_1\cap RH_{(p_+/q)'}$,
$$
\bigg(\aver{Q} \big|\big(\Id-e^{-\ell(Q)^2 L }\big)^{N}f\big|^q\,dw\bigg)^{1/q}
\le
\sum_{j=2}^\infty \tilde{\eta}_{j-1}\,\ell(2^{j}\,Q)\,\left(\aver{2^j Q} h^s\, dw\right)^{1/s},
$$
where $\tilde{\eta}_j=C\,\sum_{k=1}^{j}e^{-c\,4^k}\,\eta_{j-k}$, $j\ge 1$.

Consider next the case $1\le s\le n$ (here we also assume the previous conditions on the sequence $\{\eta_k\}_k$). Applying Lemma \ref{lemma:Poincare-Hyp-k} and the fact $a(Q)\lesssim a_w(Q)$ we conclude that \eqref{hyp:BQ:2k-Q-semi} holds for all $k\ge 0$. Then we can compute $\tilde{a}_w(Q)$ and use Lemma \ref{lemma:D-q:EPI:w} to obtain that $(\tilde{a}_w,\bar{a}_w)\in D_q(w)$ for $1\le q<s^*$. All these and the two-functional weighted version of Corollary \ref{corol:semigroup-selfimpro} give that \eqref{eqn:hypo-EPI} implies that in any of the following situations
\begin{list}{$(\theenumi)$}{\usecounter{enumi}\leftmargin=.8cm
\labelwidth=.8cm\itemsep=0.2cm\topsep=.1cm
\renewcommand{\theenumi}{\alph{enumi}}}

\item $p_-<p\le s< n$, $p_+\le s^*$, $p_-<p<q<p_+$;

\item $p_-<p\le s< n$, $p_-<p<q<s^*<p_+$;

\end{list}
if $w\in A_1\cap RH_{(p_+/q)'}$, then
$$
\bigg(\aver{Q} \big|\big(\Id-e^{-\ell(Q)^2 L }\big)^{N}f\big|^q\,dw\bigg)^{1/q}
\lesssim
\bar{a}(2Q)
=
\sum_{j=2}^\infty
\bar{\gamma}_{j-1}\, \ell(2^j\,Q)\, \left(\aver{2^j\,Q} h^s\, dw\right)^{1/s},
$$
with $\bar{\gamma}_j=2^{-j\,n\,(\frac{1-\theta}{s}+\theta(\frac1s-\frac1q)^+)}\,\sum_{l=j-1}^\infty \tilde{\eta}_l\,2^{l\,n\,(\frac{1-\theta}{s}+\theta(\frac1s-\frac1q)^+)}$  and $\tilde{\eta}_j=C\,\sum_{k=1}^j e^{-c\,4^k}\,\eta_{j-k}$, $j\ge 1$, $\tilde{\eta}_0=0$. We notice that as in \cite[Section 4.3]{JM} the obtained estimates also imply weighted global pseudo-Poincar\'e inequalities. In the present case in either  scenario $(a)$ or $(b)$, assuming that  $q=s$ and that the coefficients $\bar{\gamma}_j$ decay fast enough we can conclude that for all $t>0$
$$
\big\|\big(\Id-e^{-t L }\big)^{N}f\big\|_{L^s(w)}
\lesssim
t^{\frac{1}{2}}\,\|h\|_{L^s(w)}.
$$

Similar results for weights in $A_r$ can be obtained in the same fashion. The precise formulations and statements are left to the interest reader.

We conclude this section by observing how to obtain \eqref{eqn:hypo-EPI} with different choices of $h$.
Using that $e^{-t\,L}1\equiv 1$ and proceeding as in \cite[Section 4.2]{JM} one can easily see that the $(p,p)$ Poincar\'e inequality and the off-diagonal estimates of $e^{-t\,L}$ yield \eqref{eqn:hypo-EPI} with $h=|\nabla f|$, $s=p$, $\eta_j=e^{-c\,4^j}$ ---we would like to call the reader's attention to the fact that such estimates could be obtained in contexts where Poincar\'e inequalities are unknown or they do not hold using the ideas in \cite[Proposition 4.5]{BJM}. On the other hand, following the proof of \cite[Proposition 4.5 (a)]{BJM} one can easily obtain that \eqref{eqn:hypo-EPI} holds with $s=p$ (and then we can always take $s\ge p$), $h=\sqrt{L}f$ and $\eta_j=2^{-j\,(2\,N-n)}$.

\section{Proofs of the main results } \label{sec:proof}

In this section we prove our main results and also some of auxiliary results from the applications.

\subsection{Proof of Theorem \ref{theorem:Lp}}

We refer the reader to \cite{Jim}, \cite{JM}, \cite{BJM} for similar results in the case $p_0=1$ and $q_0=\infty$ and with $A_Q$ associated to a semigroup or a generalized approximation of the identity. Here we borrow some ideas from these references.

Let us first observe that without loss of generality we can assume that $q_0<\infty$: indeed we can replace $q_0$ by $\tilde{q}_0$, chosen in such a way that $q<\tilde{q}_0<\infty$, since we have that $O(p_0,q_0)$ implies $O(p_0,\tilde{q}_0)$.

Next we obtain version of \eqref{hyp:BQ:2k-Q} for $B_Q^2=B_Q B_Q$:

\begin{lemma}\label{lemma:BQ2-hyp}
Assuming \eqref{hyp:BQ:2k-Q} and $O(p_0,q_0)$, then, for every $j\ge 1$, we have
$$
\left(\aver{2^j Q} \left|B_Q^2 f\right|^{p_0} \right)^{1/p_0}
\leq
C\,a(2^{j+1} Q)+
\sum_{k\geq 2} \alpha_{k+j} a(2^{k+j} Q).
$$
\end{lemma}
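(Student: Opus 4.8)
The plan is to write $B_Q^2 f = B_Q(B_Q f)$ and apply the off-diagonal structure of $\mathbb{B}$ to the ``inner'' function $g := B_Q f$, for which we already control $L^{p_0}$-averages on all dilates $2^k Q$ by hypothesis \eqref{hyp:BQ:2k-Q}. Since $B_Q = I - A_Q$ and $A_Q$ is the operator with $L^{p_0}$-$L^{q_0}$ off-diagonal decay at the scale $Q$, I first write
\[
B_Q g = g - A_Q g,
\]
so that $\left(\aver{2^j Q}|B_Q^2 f|^{p_0}\right)^{1/p_0} \le \left(\aver{2^j Q}|g|^{p_0}\right)^{1/p_0} + \left(\aver{2^j Q}|A_Q g|^{p_0}\right)^{1/p_0}$. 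The first term is at most $a(2^j Q)$ by \eqref{hyp:BQ:2k-Q} applied with exponent $j$, which is absorbed into the $C\,a(2^{j+1}Q)$ term (using that dilations only make cubes larger; one may pass from $2^jQ$ to $2^{j+1}Q$ trivially at the cost of a dimensional constant, or simply keep $a(2^jQ)$ and note it is dominated). The real work is estimating the second term.

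For the $A_Q g$ term I would invoke the $L^{p_0}$-$L^{p_0}$ off-diagonal estimate \eqref{AQ:off:p-p}, which is exactly the consequence of $(b)$ and $(c)$ recorded right after Definition \ref{def:Off}: for all $j\ge 1$,
\[
\left(\aver{2^j Q}|A_Q g|^{p_0}\right)^{1/p_0}
\le C\left(\aver{2^{j+1}Q}|g|^{p_0}\right)^{1/p_0}
+ \sum_{k\ge 2}\alpha_{k+j}\left(\aver{2^{k+j}Q}|g|^{p_0}\right)^{1/p_0}.
\]
Now I substitute $g = B_Q f$ and bound each average of $|B_Q f|^{p_0}$ on $2^{j+1}Q$, resp.\ $2^{k+j}Q$, by $a(2^{j+1}Q)$, resp.\ $a(2^{k+j}Q)$, again using hypothesis \eqref{hyp:BQ:2k-Q} (with the index $j+1$, resp.\ $k+j$). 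Combining with the first term gives
\[
\left(\aver{2^j Q}|B_Q^2 f|^{p_0}\right)^{1/p_0}
\le C\,a(2^{j+1}Q) + \sum_{k\ge 2}\alpha_{k+j}\,a(2^{k+j}Q),
\]
which is precisely the claimed bound.

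The only genuinely delicate point is bookkeeping with the scales: one must make sure that applying \eqref{AQ:off:p-p} (stated for $A_Q$ acting at the scale of $Q$, evaluated on dilates $2^jQ$) to $g=B_Qf$ is legitimate, i.e.\ that $g\in L^{p_0}_{\mathrm{loc}}$ so that $A_Q g$ makes sense — this is guaranteed since by $(b)$ and the remarks following Definition \ref{def:Off} the operators $A_Q$, $B_Q$ are well defined on $L^{p_0}_{\mathrm{loc}}$, and $B_Qf$ has finite $L^{p_0}$-averages on every dilate by \eqref{hyp:BQ:2k-Q}. A second small point is that I should note the reduction to $q_0<\infty$ made at the start of the proof of Theorem \ref{theorem:Lp} is irrelevant here, since \eqref{AQ:off:p-p} holds regardless. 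No obstacle of substance arises; the lemma is essentially a one-line composition estimate dressed in the off-diagonal notation, and the main care is simply matching indices $j$, $j+1$, $k+j$ correctly so that the right-hand side comes out in the stated form.
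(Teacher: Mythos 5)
Your approach is the same as the paper's: writing $B_Q^2 f = B_Q f - A_Q B_Q f$ and applying the $L^{p_0}$-$L^{p_0}$ off-diagonal estimate \eqref{AQ:off:p-p} to $A_Q B_Q f$, then invoking \eqref{hyp:BQ:2k-Q} for the averages of $B_Q f$, is exactly what the paper does.

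One small but real imprecision, in the treatment of the term $\big(\aver{2^jQ}|B_Qf|^{p_0}\big)^{1/p_0}$: you bound it by $a(2^jQ)$ first and then try to ``absorb'' it into $C\,a(2^{j+1}Q)$. Of your two suggested justifications, the second — ``simply keep $a(2^jQ)$ and note it is dominated'' — is not valid: the functional $a$ is not assumed to be quasi-increasing or doubling here, so $a(2^jQ)\le C\,a(2^{j+1}Q)$ has no basis. The first alternative is the right idea but needs to be executed in the correct order: enlarge the averaging domain \emph{before} invoking the hypothesis, i.e.\ use $2^jQ\subset 2^{j+1}Q$ to get
$$
\left(\aver{2^jQ}|B_Qf|^{p_0}\right)^{1/p_0}\le 2^{n/p_0}\left(\aver{2^{j+1}Q}|B_Qf|^{p_0}\right)^{1/p_0}\le 2^{n/p_0}\,a(2^{j+1}Q),
$$
rather than applying \eqref{hyp:BQ:2k-Q} at scale $2^jQ$ and then manipulating the values of $a$. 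With that reordering, your proof coincides with the paper's.
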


\begin{proof}
We first notice that $O(p_0,q_0)$ implies $O(p_0,p_0)$. Then \eqref{AQ:off:p-p} and \eqref{hyp:BQ:2k-Q} give
\begin{align*}
\left(\aver{2^j Q} \left|A_Q B_Q f\right|^{p_0} \right)^{1/p_0} &
\le
C\,\left(\aver{2^{j+1} Q} |B_Qf|^{p_0} \right)^{1/p_0}+
\sum_{k\geq 2} \alpha_{k+j} \left(\aver{2^{k+j} Q} |B_Qf|^{p_0} \right)^{1/p_0}
\\
& \le
C\,a(2^{j+1}Q)+
\sum_{k\geq 2} \alpha_{k+j} a(2^{k+j} Q).
\end{align*}
This, the following algebraic formula
\begin{equation}
B_Q^2=
B_Q\,B_Q
=
B_Q- A_Q B_Q,  \label{eq:algebre}
\end{equation}
and \eqref{hyp:BQ:2k-Q} yield at once the desired estimate

\end{proof}

Next we show that \eqref{conclusion:theor-Lp} follows from the corresponding estimate for $B_Q^2$:

\begin{lemma} \label{lemma:BQ2-reduction}
Under the  assumptions of Theorem \ref{theorem:Lp}, suppose that
\begin{equation} \label{desired:BQ^2}
\|B_Q^2 f\|_{L^{q,\infty}, Q}
\lesssim
\sum_{k=1} ^\infty \tilde{\gamma}_k\,a(2^{k+1}\,Q),
\end{equation}
with $\tilde{\gamma}_k\gtrsim \alpha_{k+1}$ for $k\ge 1$. Then \eqref{conclusion:theor-Lp} follows.
\end{lemma}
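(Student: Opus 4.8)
The plan is to pass from $B_Q^2f$ to $B_Qf$ by means of the algebraic identity \eqref{eq:algebre}. Since $A_Q=I-B_Q$ commutes with $B_Q$, that identity gives $B_Q=B_Q^2+A_QB_Q$, hence for every cube $Q$
$$
\|B_Q f\|_{L^{q,\infty}, Q}
\le
\|B_Q^2 f\|_{L^{q,\infty}, Q}
+
\|A_Q(B_Q f)\|_{L^{q,\infty}, Q}.
$$
For the first term I would invoke the hypothesis \eqref{desired:BQ^2}. Recalling that (as noted at the beginning of the proof of Theorem \ref{theorem:Lp}) we may assume $q_0<\infty$, and observing that by the very definition of $\tilde a$ in \eqref{eq:tilde} one has
$\tilde a(2\,Q)=\sum_{k\ge1}\tilde\gamma_k\,a(2^k\cdot2\,Q)=\sum_{k\ge1}\tilde\gamma_k\,a(2^{k+1}\,Q)$,
the right-hand side of \eqref{desired:BQ^2} is precisely $\tilde a(2\,Q)$, so $\|B_Q^2 f\|_{L^{q,\infty}, Q}\lesssim\tilde a(2\,Q)$.

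For the second term I would first upgrade the local quasi-norm on $Q$ to an $L^{q_0}$ average over $2\,Q$: since $q<q_0$ the normalized measure on $Q$ is a probability measure, so Chebyshev's and Jensen's inequalities give $\|g\|_{L^{q,\infty},Q}\le\|g\|_{L^{q},Q}\le\|g\|_{L^{q_0},Q}$, and then $Q\subset 2\,Q$ together with $|2\,Q|=2^n|Q|$ yields $\|g\|_{L^{q_0},Q}\lesssim\big(\aver{2\,Q}|g|^{q_0}\,dx\big)^{1/q_0}$. Applying this with $g=A_Q(B_Q f)$, then the off-diagonal estimate \eqref{AQ:off:p-q:j=1} to the function $B_Q f\in L^{p_0}_{\rm loc}(\re^n)$, and finally the hypothesis \eqref{hyp:BQ:2k-Q}, I get
$$
\|A_Q(B_Q f)\|_{L^{q,\infty}, Q}
\lesssim
\Big(\aver{2\,Q}\big|A_Q(B_Q f)\big|^{q_0}\,dx\Big)^{1/q_0}
\le
\sum_{k\ge2}\alpha_k\Big(\aver{2^{k}Q}|B_Q f|^{p_0}\,dx\Big)^{1/p_0}
\le
\sum_{k\ge2}\alpha_k\,a(2^{k}Q).
$$
Now the assumption $\tilde\gamma_k\gtrsim\alpha_{k+1}$ for $k\ge1$, i.e. $\tilde\gamma_{j-1}\gtrsim\alpha_j$ for $j\ge2$, lets me reabsorb this tail: $\sum_{k\ge2}\alpha_k\,a(2^{k}Q)\lesssim\sum_{j\ge2}\tilde\gamma_{j-1}\,a(2^{j}Q)=\sum_{k\ge1}\tilde\gamma_k\,a(2^{k+1}Q)=\tilde a(2\,Q)$. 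Combining the two bounds gives $\|B_Q f\|_{L^{q,\infty}, Q}\lesssim\tilde a(2\,Q)$, which is exactly \eqref{conclusion:theor-Lp}.

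There is no genuine obstacle here, this being a short reduction step; the only points needing care are purely bookkeeping: making sure the dilation indices line up so that the coefficients $\alpha_k$ produced by \eqref{AQ:off:p-q:j=1} are dominated by the $\tilde\gamma_{k-1}$ appearing in $\tilde a(2\,Q)$, and using the reduction to $q_0<\infty$ so that the passage $L^{q,\infty}(Q)\hookrightarrow L^{q_0}$-average on $2\,Q$ is legitimate. (When $q_0=\infty$ one instead enlarges $q_0$ to a finite value above $q$, which is harmless since $O(p_0,q_0)$ implies $O(p_0,\tilde q_0)$ for $q<\tilde q_0<q_0$.)
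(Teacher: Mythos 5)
Your proposal is correct and follows essentially the same route as the paper: write $B_Q=B_Q^2+A_QB_Q$ via \eqref{eq:algebre}, bound the $B_Q^2f$ term by the hypothesis \eqref{desired:BQ^2}, and bound the $A_QB_Qf$ term by pushing the local $L^{q,\infty}(Q)$ quasi-norm up to an $L^{q_0}$-average over $2Q$, applying \eqref{AQ:off:p-q:j=1} to $B_Qf$, then \eqref{hyp:BQ:2k-Q}, and finally re-indexing using $\tilde\gamma_k\gtrsim\alpha_{k+1}$. The only cosmetic difference is that the paper works with the $L^q$-average over $2Q$ (using that $O(p_0,q_0)$ implies $O(p_0,q)$ by Jensen, so no reduction to $q_0<\infty$ is needed at this point), whereas you pass through the $L^{q_0}$-average and rely on the reduction to $q_0<\infty$ made at the start of the proof of Theorem \ref{theorem:Lp}; both give the same chain of inequalities.
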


\begin{proof}
We proceed as in the previous proof using that $O(p_0,q_0)$ implies $O(p_0,q)$ since $p_0\le q< q_0$. From \eqref{AQ:off:p-q:j=1} and \eqref{hyp:BQ:2k-Q} we obtain
\begin{align}
\left(\aver{Q} \left|A_QB_Q f\right|^{q} \right)^{1/q} &
\le
2^{n/q}
\left(\aver{2\,Q} \left|A_QB_Q f\right|^{q} \right)^{1/q}
\leq
2^{n/q}\sum_{k\geq 2} \alpha_k \left(\aver{2^{k} Q} |B_Qf|^{p_0} \right)^{1/p_0} \nonumber
\\
& \le
2^{n/q}\sum_{k\geq 2} \alpha_k a(2^{k} Q)
\lesssim
\sum_{k=1} ^\infty \tilde{\gamma}_k\,a(2^{k+1}\,Q).
\label{eq:AQBQ}
\end{align}
We conclude the proof by invoking \eqref{eq:algebre} which together with \eqref{desired:BQ^2} and the trivial estimate $\|\cdot\|_{L^{q,\infty},Q} \le \|\cdot\|_{L^{q},Q}$ yield the desired inequality.
\end{proof}

Let $\{\tilde{\gamma}_k\}_{k\ge 0}$ be a fast decay sequence to be chosen (see Remark \ref{remark:sequences}) and define $\tilde{a}$ by means of it. We are going to show that
\begin{equation}\label{desired:BQ2}
\|B_Q^2 f\|_{L^{q,\infty}, Q}
\lesssim
\tilde{a}(2\,Q)
=
\sum_{k=1} ^\infty \tilde{\gamma}_{k}\,a(2^{k+1}\,Q)
\end{equation}
Then Lemma \ref{lemma:BQ2-reduction} implies the desired estimate \eqref{conclusion:theor-Lp} provided $\tilde{\gamma}_{k}\gtrsim \alpha_{k+1}$ (see Remark \ref{remark:sequences}).

We fix a cube $Q$ and assume that $\tilde{a}(2\,Q)<\infty$,
otherwise there is nothing to prove. Let $G(x) = \big| B_Q^2f(x) \big|\,\chi_{2\,Q}(x)$. By the Lebesgue differentiation theorem,  it suffices to estimate
$\|M_{p_0} G\|_{L^{q,\infty}, Q}$ where $M_{p_0} G(x)=M(G^{p_0})(x)^{1/p_0}$. Hence, we study the level sets
$\Omega_t = \{x\in\R^n: M_{p_0} G(x)>t\}$, $t>0$.

We first estimate the $L^{p_0}$-norm of $G$ by using Lemma \ref{lemma:BQ2-hyp} with $j=1$:
\begin{equation}\label{G-Lp0}
\left(\aver{2\,Q} G^{p_0}\,dx\right)^{\frac1{p_0}}
=
\left(\aver{2\,Q} |B_Q ^2f(x)|^{p_0}\,dx\right)^{\frac1{p_0}}
\le
C\,a(4Q)+\sum_{k\geq 2} \alpha_{k+1} a(2^{k+1} Q)
\le
\tilde{a}(2\,Q).
\end{equation}
In particular, $G\in L^{p_0}$ because $\tilde{a}(2\,Q)<\infty$.
Thus, using that $M$ is of weak-type $(1,1)$, we obtain for some numerical constant $c_0$
\begin{equation}\label{Omega-t}
|\Omega_t|
\lesssim \frac{1}{t^{p_0}}\,\|G\|_{L^{p_0}}^{p_0}
< \frac{c_0^{p_0} \, \tilde{a}(2\,Q)^{p_0}}{t^{p_0}}\, |Q|.
\end{equation}
Next, let $s>1$ be large enough to be chosen. We claim that the following good-$\lambda$ inequality holds:
given $0<\lambda<1$, for all $t>0$,
\begin{equation}\label{good-lambda}
|\Omega_{s\,t}\cap Q| \leq c\,\left[\left(\frac{\lambda}{s}\right)^{p_0}+s^{-q_0}\right]\, |\Omega_t\cap Q| + c\,
\bigg(\frac{c_0\,\tilde{a}(2\,Q)}{\lambda\,t}\bigg)^{q}\,|Q|,
\end{equation}
where $c$ only depends on $n$ and $\|\tilde{a}\|_{D_{q_0}}$.

Assuming this momentarily,  we then proceed as follows. We fix $N>0$. The previous inequality implies
\begin{align*}
\sup_{0<t\leq N/s} t^{q}\, \frac{|\Omega_{s\,t}\cap Q|}{|Q|}
&
\leq
c\,  \left[\left(\frac{\lambda}{s}\right)^{p_0}+s^{-q_0}\right] \, \sup_{0<t\leq N/s}t^{q}\, \frac{|\Omega_t\cap Q|}{|Q|}
+ c\, \left(\frac{c_0\,\tilde{a}(2\,Q)}{\lambda}\right)^{q}
\\
&
\leq c\,\left[\left(\frac{\lambda}{s}\right)^{p_0}+s^{-q_0}\right] \, \sup_{0<t\leq N} t^{q}\, \frac{|\Omega_t\cap
Q|}{|Q|} + c\, \left(\frac{c_0\,\tilde{a}(2\,Q)}{\lambda}\right)^{q}.
\end{align*}
Therefore,
\begin{equation}\label{good-lambda:sup}
\sup_{0<t\leq N} t^{q}\, \frac{|\Omega_{t}\cap Q|}{|Q|} \leq c\,
\left[s^{q-p_0}\lambda^{p_0}+s^{q-q_0}\right] \, \sup_{0<t\leq N}t^{q}\,\frac{|\Omega_t\cap Q|}{|Q|} +
c\, s^{q}\, \left(\frac{c_0\, \tilde{a}(2\,Q)}{\lambda}\right)^{q}.
\end{equation}
Observe that
$$
\sup_{0<t\leq N} t^{q}\, \frac{|\Omega_{t}\cap Q|}{|Q|} \leq N^{q}
<\infty.
$$
We take $s$ large enough and then $\lambda$ small enough such that
$$
c\, \left[s^{q-p_0}\lambda^{p_0}+s^{q-q_0}\right] \leq \frac{1}{2}.
$$
Hence, we can hide the first term in the right side of \eqref{good-lambda:sup} and get
$$
\sup_{0<t\leq N} t^{q}\, \frac{|\Omega_{t}\cap Q|}{|Q|} \lesssim \tilde{a}(2\,Q)^{q},
$$
with an implicit constant independent of the cube $Q$ and on $N$.
Taking limits as $N\to\infty$, we conclude that
$$
\|M_{p_0} G\|_{L^{q_,\infty},Q}\lesssim \tilde{a}(2\,Q).
$$
This estimate and the Lebesgue differentiation theorem give \eqref{desired:BQ2} as observed at the beginning of the proof.

We now show \eqref{good-lambda}. We split the proof in two cases. When $t$ is large,
we shall use the Whitney covering lemma adapted to the cube $Q$ and some auxiliary results for the operators $B_Q$.
When $t$ is small, the estimate is straightforward. Indeed if $0<t\leq c_0\,\tilde{a}(2\,Q)$ and $0<\lambda<1$,
$$
|\Omega_{s\,t}\cap Q| \le |Q| <  \left(\frac{c_0\,\tilde{a}(2\,Q)}{\lambda\,t}\right)^{q}\, |Q|,
$$
and this clearly implies \eqref{good-lambda}.

Suppose now that $t> c_0\,\tilde{a}(2Q)$.
We first need to build a dyadic grid in $\R^n$ adapted to the fixed cube $Q$ that consists of translations and dilations of the classical dyadic  structure. As in Theorem 5.2 and Subsection 5.1.1 of \cite{JM}, let $\{Q_i^t\}_i$ be the family of Whitney cubes (associated to such dyadic grid scaled to $Q$) so that $\Omega_t=\cup_i Q_i^t$: such a collection exists since $\Omega_t$ is open and by \eqref{Omega-t} we have
$\Omega_t\subsetneq \re^n$. Moreover, as a consequence of \eqref{Omega-t} and $t>
c_0\,\tilde{a}(Q)$, we also have $|\Omega_t| < |Q|$, and consequently for all $i$
\begin{equation}\label{lado}
\ell(Q_i^t) <\ell(Q).
\end{equation}
Next we are going to estimate
$|\Omega_{s\,t}\cap Q|$.
First, using that the level sets are nested, we obtain that
$$
|\Omega_{s\,t}\cap Q| = |\Omega_{s\,t}\cap \Omega_{t}\cap Q| \leq
\sum_{i} \left| \left\{x\in Q_i^t\cap Q: M_{p_0} G(x)> s\,t \right\} \right|.
$$
From now on, we only consider those cubes $Q_i^t$ such that $Q_i^t \cap Q
\neq\emptyset$ and since the cubes $Q_i^t$ are dyadic with respect to the cube
$Q$, \eqref{lado} implies that $Q_i^t
\subset Q$. We first localize
$G$. If $x\in Q_i^t$, we have
$$
M_{p_0} G(x) \leq M_{p_0}  (G\,\chi_{2\, Q_i^t})(x) + M_{p_0}  (G\,\chi_{(2\, Q_i^t)^c})(x) \leq M_{p_0}  (G\,\chi_{2\, Q_i^t})(x) + 23^{n/p_0}\, t,
$$
since, by \cite[Lemma 5.3]{JM},  $ M_{p_0} (G\,\chi_{(2\, Q_i^t)^c})(x) \leq 23^{n/p_0} t$ for all $x\in Q_i^t$.
Therefore, if $s>2\cdot 23^{n/p_0}$,
\begin{align}
|\Omega_{s\,t}\cap Q| & \leq
\sum_{i: Q_i^t\subset Q} \left| \left\{x\in Q_i^t: M_{p_0}
(G\,\chi_{2\,Q_i^t})(x) > (s-23^{n/p_0})\,t \right\} \right| \nonumber
\\& \leq
\sum_{i: Q_i^t\subset Q} \left| \left\{x\in Q_i^t: M_{p_0}
(G\,\chi_{2\,Q_i^t})(x) > s\,t/2 \right\} \right|. \label{Medida:conj-nivel}
\end{align}
Note that although in the previous estimate $G$ is
localized to $2\, Q_i^t$,
this function still involves the cube $Q$ on its term
$B_Q^2f$. Using that
\begin{align*}
M_{p_0} (G\,\chi_{2\,Q_i^t})
&= M_{p_0} \big(|B_Q^2f|\,\chi_{2\,Q_i^t}\big)
 \leq M_{p_0} \big(|B_{Q_i^t}^2 f|\, \chi_{2\,Q_i^t}\big) + M_{p_0} \big(|B_{Q_i^t}^2f-B_Q^2f|\, \chi_{2\,Q_i^t}\big),
\end{align*}
we obtain
\begin{multline*}
|\Omega_{s\,t}\cap Q|
\leq
\sum_{i: Q_i^t\subset Q} \big| \big\{x\in Q_i^t: M_{p_0} \big(|B_{Q_i^t}^2f|\, \chi_{2\,Q_i^t}\big)(x)> s\,t/4\big\} \big|
\\
+
\sum_{i: Q_i^t\subset Q} \big| \big\{x\in Q_i^t: M_{p_0} \big(|B_{Q_i^t}^2f-B_Q^2f|\, \chi_{2\,Q_i^t}\big)(x)> s\,t/4\big\} \big|
=I+II.
\end{multline*}

The following auxiliary results, whose proofs are deferred until Section \ref{subsection:proofs-aux}, allow us to estimate $I$ and $II$.

\begin{proposition} \label{prop:I} The term $I$ can be estimated as follows:
$$
I \lesssim \left(\frac{\lambda}{s}\right)^{p_0} |\Omega_t \cap Q| + \left( \frac{\tilde{a}(2\,Q)}{\lambda t} \right)^{q} |Q|.
$$
\end{proposition}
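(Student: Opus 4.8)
The plan is to estimate each term in the sum over the Whitney cubes $Q_i^t\subset Q$ separately and then use disjointness together with the $D_q$ condition on $\tilde a$. Fix such a cube $Q_i^t$. The starting point is that $M$ is of weak type $(p_0,p_0)$, so
$$
\big|\{x\in Q_i^t: M_{p_0}(|B_{Q_i^t}^2 f|\,\chi_{2Q_i^t})(x)>s\,t/4\}\big|
\lesssim
\frac{1}{(s\,t)^{p_0}}\int_{2Q_i^t}|B_{Q_i^t}^2 f|^{p_0}\,dx
\approx
\frac{|Q_i^t|}{(s\,t)^{p_0}}\left(\aver{2Q_i^t}|B_{Q_i^t}^2 f|^{p_0}\right).
$$
Now I would invoke Lemma \ref{lemma:BQ2-hyp} (with $j=1$ and cube $Q_i^t$ in place of $Q$) to bound the $L^{p_0}$ average of $B_{Q_i^t}^2 f$ on $2Q_i^t$ by $C\,a(4Q_i^t)+\sum_{k\ge2}\alpha_{k+1}a(2^{k+1}Q_i^t)\le \tilde a(2Q_i^t)$, after suitably choosing the fast-decaying sequence $\tilde\gamma_k\gtrsim\alpha_{k+1}$. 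This gives, for each $i$,
$$
\big|\{x\in Q_i^t: M_{p_0}(|B_{Q_i^t}^2 f|\,\chi_{2Q_i^t})(x)>s\,t/4\}\big|
\lesssim
\frac{\tilde a(2Q_i^t)^{p_0}}{(s\,t)^{p_0}}\,|Q_i^t|.
$$

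Next I would split into the two contributions appearing in the statement. The idea is that $\tilde a(2Q_i^t)$ is controlled by the level $t$ up to a constant for most purposes: since $Q_i^t$ is a Whitney cube with $\ell(Q_i^t)<\ell(Q)$, its dilate $c\,Q_i^t$ meets the complement of $\Omega_t$, which gives a pointwise bound $\tilde a(2Q_i^t)\lesssim t$ coming from the stopping-time/Whitney property (this is the analogue of \cite[Lemma 5.3]{JM} used just above, now applied to the functional side rather than to $G$). Using $\tilde a(2Q_i^t)\lesssim t$ in a fraction $\lambda$ of the mass and keeping the genuine $\tilde a$ for the rest, one writes
$$
\frac{\tilde a(2Q_i^t)^{p_0}}{(s\,t)^{p_0}}\,|Q_i^t|
\le
\left(\frac{\lambda}{s}\right)^{p_0}|Q_i^t|
+
\left(\frac{\tilde a(2Q_i^t)}{\lambda\,t}\right)^{p_0}\!|Q_i^t|,
$$
valid by splitting according to whether $\tilde a(2Q_i^t)\le\lambda\,t$ or not; in the second case one further uses $p_0\le q$ together with $\tilde a(2Q_i^t)\lesssim t$ to replace the exponent $p_0$ by $q$ at the cost of a constant, i.e. $\big(\tilde a(2Q_i^t)/(\lambda t)\big)^{p_0}\lesssim\big(\tilde a(2Q_i^t)/(\lambda t)\big)^{q}$. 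Summing over $i$, the first terms give $\big(\lambda/s\big)^{p_0}\sum_i|Q_i^t|\le\big(\lambda/s\big)^{p_0}|\Omega_t\cap Q|$ since the Whitney cubes are pairwise disjoint and contained in $\Omega_t\cap Q$. For the second terms, the crucial step is to apply the hypothesis $\tilde a\in D_q$ to the disjoint family $\{Q_i^t\}_i\subset Q$ (after absorbing the dilation $2Q_i^t\subset 2Q$ and using doubling-type bookkeeping on $\tilde\gamma_k$, exactly as in \cite{JM}): this yields $\sum_i\tilde a(2Q_i^t)^q|Q_i^t|\lesssim \tilde a(2Q)^q|Q|$, whence the second sum is $\lesssim\big(\tilde a(2Q)/(\lambda t)\big)^q|Q|$.

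The main obstacle I anticipate is the passage $\tilde a(2Q_i^t)\lesssim t$: one must be careful that the Whitney cubes are dyadic with respect to the grid adapted to $Q$ and that their parents (or a fixed dilate) intersect $\complement\Omega_t$, so that the maximal function estimate at a point of $\complement\Omega_t$ can be fed back through \eqref{hyp:BQ:2k-Q} to control the dyadic expansion defining $\tilde a(2Q_i^t)$; the fast decay of $\tilde\gamma_k$ (equivalently of $\alpha_k$) is what makes the tail of this expansion summable against the geometric growth $2^{kn}$ of the dilated cubes, and choosing $\tilde\gamma_k$ appropriately (see Remark \ref{remark:sequences}) is essential here. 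The second delicate point is the application of $\tilde a\in D_q$ to the dilated family $\{2Q_i^t\}_i$, which are no longer disjoint; this is handled by the standard trick of decomposing each $2^kQ_i^t$ into translates of $Q_i^t$ and controlling the overlap by a constant depending only on $n$, at the cost of replacing $\tilde\gamma_k$ by a slightly worse but still fast-decaying sequence, precisely as in the proof of \cite[Theorem 3.1]{JM}.
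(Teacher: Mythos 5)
Your scheme is close in spirit to the paper's: weak type for the maximal function, Lemma \ref{lemma:BQ2-hyp}, a case split at the level $\lambda t$, and then $D_q$ for the Whitney family. But the proposal contains a step that is both unprovable and unnecessary, and you have flagged it yourself as the ``main obstacle'': the claim that $\tilde a(2Q_i^t)\lesssim t$ follows from the Whitney property. This is not true in general. The Whitney/stopping property (via \cite[Lemma 5.3]{JM}, i.e.\ the estimate that becomes \eqref{Whitney-prop}) controls $L^{p_0}$ averages of $G=|B_Q^2 f|\chi_{2Q}$ over dilates of $Q_i^t$; it says nothing about the functional $a$. The hypothesis \eqref{hyp:BQ:2k-Q} is a one-sided inequality (oscillation $\le a$), so there is no mechanism to convert a pointwise bound on $M_{p_0}G$ into an upper bound on $a(2^k Q_i^t)$ or on $\tilde a(2Q_i^t)$. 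The good news is that you never need this claim: in the case $\tilde a(2Q_i^t)>\lambda t$ the ratio $\tilde a(2Q_i^t)/(\lambda t)$ is already $>1$, so the exponent upgrade $\big(\tilde a(2Q_i^t)/(\lambda t)\big)^{p_0}\le \big(\tilde a(2Q_i^t)/(\lambda t)\big)^q$ for $p_0<q$ is automatic with constant one.

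Two further simplifications align the argument with the paper and remove the overlap considerations you invoke at the end. First, when you apply Lemma \ref{lemma:BQ2-hyp} with $j=1$ you should absorb $C\,a(4Q_i^t)+\sum_{k\ge 2}\alpha_{k+1}a(2^{k+1}Q_i^t)$ into $\tilde a(Q_i^t)$ rather than $\tilde a(2Q_i^t)$: with $\tilde\gamma_2\gtrsim 1$ and $\tilde\gamma_k\gtrsim\alpha_k$ for $k\ge 3$ this reads off directly from the definition of $\tilde a$. The Whitney cubes $\{Q_i^t\}_i$ are pairwise disjoint and contained in $Q\subset 2Q$, so the $D_q$ condition for $\tilde a$ applies verbatim to $\sum_i\tilde a(Q_i^t)^q|Q_i^t|\le C^q\tilde a(2Q)^q|2Q|$. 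There is then no reason to dilate the family and appeal to bounded overlap or doubling bookkeeping; that complication is genuinely needed in the proof of Theorem \ref{theorem:Lp-alternative} (where the localization forces $2Q_i^t$ to appear), but not here. Second, the paper splits the index set {\em before} invoking Lemma \ref{lemma:BQ2-hyp}, according to whether $\aver{2Q_i^t}|B_{Q_i^t}^2 f|^{p_0}\,dx\lessgtr(\lambda t)^{p_0}$: on the small side the weak type bound alone gives $(\lambda/s)^{p_0}|Q_i^t|$; on the large side one bounds the measure trivially by $|Q_i^t|$ and uses $\lambda t<\tilde a(Q_i^t)$ from Lemma \ref{lemma:BQ2-hyp} to multiply in the factor $(\tilde a(Q_i^t)/(\lambda t))^q\ge 1$. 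Your version (weak type on every cube, then split by $\tilde a(2Q_i^t)$) gives the same conclusion but is slightly looser. In short: drop the spurious $\tilde a(2Q_i^t)\lesssim t$ step, replace $\tilde a(2Q_i^t)$ by $\tilde a(Q_i^t)$, and the argument lines up with the paper's.
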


\begin{proposition}\label{prop:II}
We have the following estimate:
$$
\left(\aver{2\,Q_i^t}  \big|B_{Q_i^t}^2f-B_Q^2f\big|^{q_0}\,dx\right)^{1/q_0}
\lesssim
\tilde{a}(2Q)+t.
$$
\end{proposition}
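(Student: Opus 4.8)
The plan is to control the difference $B_{Q_i^t}^2 f - B_Q^2 f$ on $2\,Q_i^t$ by an algebraic manipulation that exposes the cancellation hidden in the compositions $(I-A_{Q_i^t})A_Q$ and $(I-A_{Q_i^t})A_Q^2$, and then to feed that into the off-diagonal estimates $(c)$ and $(d)$ of Definition \ref{def:Off} together with the hypothesis \eqref{hyp:BQ:2k-Q}. First I would write $B_R = I - A_R$ with $R = Q_i^t \subset Q$, so that
$$
B_R^2 - B_Q^2 = (I - A_R)^2 - (I - A_Q)^2 = -2\,A_R + A_R^2 + 2\,A_Q - A_Q^2.
$$
The key is to rearrange this using the commutativity $(a)$ so that every term carries either a factor $A_Q$ applied to $f$ (which is small by off-diagonal decay, being measured at the large scale $Q$) or a factor $B_R$ applied to something of the form $A_Q f$ or $A_Q^2 f$ (which is small by $(d)$, the off-diagonal estimate for $B_R A_Q$ at the lower scale $R$). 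Concretely, one writes
$$
B_R^2 f - B_Q^2 f = -\,B_R\,(2A_Q - A_Q^2)\,f + (\text{terms that are } A_Q\cdot(\dots)f),
$$
i.e. collecting $2A_Q - A_Q^2 = I - B_Q^2$ appropriately; using $B_R^2 - B_Q^2 = B_R(B_R - B_Q) + (B_R - B_Q)B_Q$ and $B_R - B_Q = A_Q - A_R$ gives, after commuting,
$$
B_R^2 f - B_Q^2 f = B_R A_Q\big((I + B_R)f\big) - B_R A_R A_Q f - A_Q B_Q f + A_Q^2\,\text{-type terms},
$$
so that each summand is of one of the two favourable shapes. (This is the place where one has to be careful and is, I expect, the main obstacle: getting the algebra so that no ``uncancelled'' $A_R$ or $B_R$ acting directly on $f$ survives — every term must be routed through either an $A_Q$ hitting $f$ or a $B_R$ hitting an $A_Q$-image. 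The formula \eqref{eqn:Bqi-BQ:square} referenced in Section \ref{subsec:comm} is presumably exactly this identity.)

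Once the identity is in place, I would estimate each type of term separately on $2R = 2\,Q_i^t$, taking $L^{q_0}$-averages. For the terms of the form $A_Q(\cdots)f$ on $2R$: since $R \subset Q$ and hence $2R \subset 2Q$, the estimate \eqref{AQ:off:p-q:j=1} (together with \eqref{AQ:off:p-p} and the $L^{p_0}$-boundedness $(b)$ for the inner factors) gives
$$
\Big(\aver{2R} |A_Q(\cdots)f|^{q_0}\Big)^{1/q_0}
\le \Big(\frac{|2Q|}{|2R|}\Big)^{1/q_0}\Big(\aver{2Q}|A_Q(\cdots)f|^{q_0}\Big)^{1/q_0}
\lesssim \Big(\frac{|Q|}{|R|}\Big)^{1/q_0}\sum_{k\ge 2}\alpha_k\,a(2^kQ),
$$
and I would then absorb the volume factor $(|Q|/|R|)^{1/q_0}$ using the Whitney estimate \eqref{lado} in the form $\ell(R) \approx \operatorname{dist}(R,\partial\Omega_t)$ — wait, more simply: this volume ratio is harmless only if it is bounded, which it is \emph{not} in general. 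So instead one must use that on $Q_i^t$ the maximal function $M_{p_0}G$ is $\lesssim t$ (the Whitney property, as in \cite[Lemma 5.3]{JM}), or rather route this bound through the hypothesis so that the right-hand side becomes $\tilde a(2Q)$; since $a(2^kQ)\le$ (fast decay)$^{-1}\,\tilde a(2Q)$ is false in the wrong direction, the correct reading is that $\sum_{k\ge 2}\alpha_k a(2^kQ) \le \tilde a(2Q)$ directly by the definition \eqref{eq:tilde} of $\tilde a$, provided the sequence $\tilde\gamma_k$ dominates $\alpha_k$ — which is exactly the condition imposed in Remark \ref{remark:sequences}. Thus these terms contribute $\lesssim \tilde a(2Q)$, and the spurious volume factor does not actually appear because $A_Q$ is evaluated on $2Q$ directly, not on $2R$, when one uses $2R\subset 2Q$ combined with the \emph{averaged} estimate — one passes from $\aver{2R}$ to $\aver{2Q}$ only at the cost of $(|2Q|/|2R|)^{1/q_0}$, so in fact one should \emph{not} enlarge the domain; instead apply $(c)$ directly at the scale $2R$, decomposing $f = f\chi_{4R} + \sum_{l\ge 2} f\chi_{2^{l+1}R\setminus 2^lR}$ via \eqref{AQ:on:p-q}–\eqref{AQ:off:p-q}, which yields $\sum_{k\ge 2}\alpha_k \big(\aver{2^kR}|(\cdots)f|^{p_0}\big)^{1/p_0}$. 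Since $R\subset Q$ with $\ell(R)<\ell(Q)$, for each $k$ either $2^kR\subset 2^{m}Q$ for a controlled $m$ depending on $k$ and on $\log_2(\ell(Q)/\ell(R))$, or we use that these averages are bounded by $M_{p_0}(B_Qf)$ evaluated on $Q_i^t$, which by the Whitney property is $\lesssim t$. This dichotomy ($k$ small vs. $k$ large relative to the scale gap) is the genuine technical core.

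For the terms of the form $B_R A_Q(\cdots)f$ and $B_R A_R A_Q f$ on $2R$, I would apply hypothesis $(d)$, namely \eqref{BR-AQ:off:p-q}, directly:
$$
\Big(\aver{2R}|B_R A_Q g|^{q_0}\Big)^{1/q_0} \le \sum_{k\ge 1}\beta_{k+1}\Big(\aver{2^{k+1}Q}|g|^{p_0}\Big)^{1/p_0},
$$
with $g = f$ or $g = A_Q f$ (for the latter, first apply \eqref{AQ:off:p-p} to reduce $\aver{2^{k+1}Q}|A_Qf|^{p_0}$ to a sum of $a(2^mQ)$'s, which is again $\lesssim\tilde a(2Q)$ by the choice of $\tilde\gamma_k$). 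Here \eqref{hyp:BQ:2k-Q} is used to bound $\big(\aver{2^{k+1}Q}|B_Qf|^{p_0}\big)^{1/p_0}\le a(2^{k+1}Q)$, and the fast decay of $\beta_k$ against the (at most geometric) growth of $a(2^kQ)$ — guaranteed because $\tilde a\in D_q$ forces $a$ to grow no faster than a fixed power along dyadic dilations — makes the series converge to $\lesssim \tilde a(2Q)$. Finally, one extra $+\,t$ is picked up precisely in the dichotomy above when a ``large-$k$'' average $\aver{2^kR}|B_Qf|^{p_0}$ that cannot be absorbed into $2Q$ is instead dominated by the Whitney bound $M_{p_0}(B_Q f)(x_{Q_i^t})\lesssim t$. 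Summing the (finitely many types of) contributions gives the claimed bound $\big(\aver{2Q_i^t}|B_{Q_i^t}^2 f - B_Q^2 f|^{q_0}\big)^{1/q_0}\lesssim \tilde a(2Q) + t$. I expect the bookkeeping in the algebraic identity and in the scale-gap dichotomy to be the only real difficulties; everything after that is a mechanical application of $(b)$, $(c)$, $(d)$ and \eqref{hyp:BQ:2k-Q}.
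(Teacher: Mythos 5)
Your proposal correctly identifies that some commutativity-based algebraic decomposition of $B_{Q_i^t}^2 - B_Q^2$ is the heart of the matter, and you correctly guess that \eqref{eqn:Bqi-BQ:square} is the key, but the decomposition you actually write down does not do what you claim, and the ``favourable shapes'' you propose are not the right ones, so several of the subsequent estimates would not go through.

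Concretely: you assert that after commuting one can arrange every term to carry either an ``$A_Q$ acting on $f$'' or a ``$B_R A_Q$ acting on something''. But expanding $B_R^2 - B_Q^2 = -2A_R + A_R^2 + 2A_Q - A_Q^2$ (with $R=Q_i^t$) and collecting terms along the lines you sketch leaves an uncancelled block $-2A_R f + A_R^2 f$, i.e.\ $A_R$ acting directly on $f$ at the \emph{small} scale — exactly the kind of term you say must not survive, and one which none of the hypotheses controls in $L^{q_0}$. The paper's identity
$$
B_{Q_i^t}^2 - B_Q^2 \;=\; B_{Q_i^t}^2\bigl(\Id - B_Q^2\bigr)\;-\;\bigl(\Id - B_{Q_i^t}^2\bigr)B_Q^2
$$
avoids this: using commutativity, the first block equals $(B_{Q_i^t}A_Q)(2\Id - A_Q)B_{Q_i^t}$, which is handled by hypothesis $(d)$ followed by \eqref{AQ:off:p-p}, \eqref{hyp:BQ:2k-Q} and the $D_0$ property; the second block equals $A_{Q_i^t}(2\Id - A_{Q_i^t})B_Q^2$, i.e.\ $A_R$ at the \emph{small} scale acting on $B_Q^2 f$, and is handled by $(c)$ \emph{at scale $R$}, not at scale $Q$. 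Neither block is ``$A_Q$ applied to $f$''.

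Two further slips in your sketch: (i) you propose to estimate ``$A_Q(\cdots)f$ on $2R$'' by applying $(c)$ at scale $2R$ via \eqref{AQ:on:p-q}--\eqref{AQ:off:p-q}, but those are off-diagonal bounds for the operator $A_R$ at its own scale, not for $A_Q$ restricted to a small cube, so that step is not licensed by the hypotheses; and (ii) your dichotomy for where the $+\,t$ comes from is reversed. In the paper, one splits $B_Q^2 f$ into its restriction to $2^{k_i}Q_i^t\subset 2Q$ (where it equals $G$) and its far tail; the \emph{near} part, not the large-$k$ part, is the one dominated by the Whitney bound $M_{p_0}G\lesssim t$, while the far tail gives $\tilde a(2Q)$ via \eqref{hyp:BQ:2k-Q} and the $\{2^{kn/p_0}\alpha_k\}\in\ell^1$ assumption. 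As written, your proof does not pin down a correct identity nor a coherent estimate scheme, so there is a genuine gap rather than a mere change of route.
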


Using the strong-type $(q_0,q_0)$ of the maximal function $M_{p_0}$, the last proposition and the fact that  $\tilde{a}(2\,Q)\lesssim t$, we have
\begin{multline*}
II
\lesssim
\frac{1}{(st)^{q_0}} \sum_{i: Q_i^t\subset Q} \int_{2\,Q_i^t}  \big|B_{Q_i^t}^2f-B_Q^2f\big|^{q_0}\,dx\lesssim
\frac{1}{(st)^{q_0}} \sum_{i: Q_i^t\subset Q} \left(\tilde{a}(2\,Q)+t\right)^{q_0} |Q_i^t|
\\
\lesssim
s^{-q_0}\sum_{i: Q_i^t\subset Q} |Q_i^t|
\lesssim
s^{-q_0}|\Omega_t \cap Q|.
\end{multline*}
This together with Proposition \ref{prop:I} lead to
$$
|\Omega_{s\,t}\cap Q| \leq  c\left[\left(\frac{\lambda}{s}\right)^{p_0}+s^{-q_0}\right] |\Omega_t \cap Q| + c\left( \frac{\tilde{a}(2\,Q)}{\lambda t} \right)^{q} |Q|
$$
for some numerical constant $c$ (independent on $\lambda$ and $q$). This completes the proof of \eqref{good-lambda} in this second case $\tilde{a}(2\,Q)\lesssim t$.\qed

\subsubsection{Proofs of the auxiliary results}\label{subsection:proofs-aux}

We refer the reader to the proof of Theorem \ref{theorem:Lp} for the notation used in this section. We begin by proving Proposition \ref{prop:I}.

\begin{proof}[Proof of Proposition \ref{prop:I}]
We note that
$$
I
= \sum_{i: Q_i^t\subset Q} \big| \big\{x\in Q_i^t: M_{p_0} \big(|B_{Q_i^t}^2f|\, \chi_{2\,Q_i^t}\big)(x)> s\,t/4\big\}\big|
=
\sum_{i\in \Gamma_1} \dots+\sum_{i\in \Gamma_2}\dots := \Sigma_1+\Sigma_2, $$
where
$$
\Gamma_1:=
\Big\{i:\ Q_i^t\subset Q,\ \aver{2\,Q_i^t} |B_{Q_i^t}^2f|^{p_0}\,dx\leq (\lambda\,t)^{p_0}
\Big\}$$
and$$
\Gamma_{2}:=
\Big\{i:\ Q_i^t \subset Q,\ \aver{2\,Q_i^t} |B_{Q_i^t}^2f|^{p_0}\,dx>(\lambda\,t)^{p_0} \Big\}.
$$
For the term corresponding to $\Gamma_1$, we use that the maximal function is of weak-type $(1,1)$, the definition of the set $\Gamma_1$ and then the fact that the cubes $Q_i^t$ are pairwise disjoint and they are all contained in $\Omega_t \cap Q$:
$$
\Sigma_1
\lesssim
\frac{1}{(s\,t)^{p_0}}
\sum_{i\in \Gamma_1}
\int_{2\,Q_i^t}
|B_{Q_i^t}^2f|^{p_0}\, dx
\lesssim
\left(\frac{\lambda}{s}\right)^{p_0} \sum_{i\in \Gamma_1} |Q_i^t|
\lesssim
\left(\frac{\lambda}{s}\right)^{p_0} |\Omega_t \cap Q|.
$$
This corresponds to the first part of the desired inequality.
For the indices $i\in \Gamma_2$ we use Lemma \ref{lemma:BQ2-hyp}:
\begin{equation}
(\lambda\,t) < \left(\aver{2\,Q_i^t}|B_{Q_i^t}^2f|^{p_0}\,dx\right)^{1/p_0} \leq
C\,a(4\,Q_i^t)+\sum_{k=2}^\infty \alpha_{k+1}\, a(2^{k+1}\,Q_i^t)
\le
\tilde{a}(Q_i^t). \label{eq:atilde:Dq}
\end{equation}
Then, taking into account that  $\tilde{a}\in D_{q}$ and that $\{Q_i^t\}_i\subset Q\subset 2\,Q$ is a collection of pairwise disjoint cubes, we obtain
\begin{equation}\label{use-Dq}
\Sigma_2\leq \sum_{i\in \Gamma_2} |Q_i^t|
\leq
\sum_{i\in \Gamma_2} \left(\frac{\tilde{a}(Q_i^t)}{\lambda t}\right)^{q} |Q_i^t|
\lesssim
\left(\frac{\tilde{a}(2\,Q)}{\lambda t}\right)^{q} |Q|.
\end{equation}
\end{proof}

\begin{proof}[Proof of Proposition \ref{prop:II}]

We first observe that
\begin{equation}\label{eqn:Bqi-BQ:square}
B_{Q_i^t}^2 - B_{Q}^2 = B_{Q_i^t}^2(\Id-B_Q^2) - (\Id-B_{Q_i^t}^2)B_Q^2,
\end{equation}
and estimate each term separately, here we use $\Id$ to denote the identity operator.
Using the commutative property $(a)$ of Definition \ref{def:Off}, we have
$$
B_{Q_i^t}^2(\Id-B_Q^2)
=
(B_{Q_i^t} A_Q)(2\,\Id-A_Q) B_{Q_i^t}.
$$
This, \eqref{BR-AQ:off:p-q} and \eqref{AQ:off:p-p} yield
\begin{align*}
&\left( \aver{2Q_i^t} \big| B_{Q_i^t}^2(\Id-B_Q^2) f(x) \big|^{q_0} dx \right)^{1/q_0}
\\
&
\quad\leq
\sum_{k\geq 1} \beta_{k+1} \left(\aver{2^{k+1} Q} |(2\,\Id-A_Q) B_{Q_i^t}f (x)|^{p_0}\,dx \right)^{1/p_0}
\\
&
\quad \le
\sum_{k\geq 1} 2\,\beta_{k+1} \left(\aver{2^{k+1} Q} |B_{Q_i^t} f(x)|^{p_0}\,dx \right)^{1/p_0}
+
\sum_{k\geq 1} C\,\beta_{k+1} \left(\aver{2^{k+2} Q} |B_{Q_i^t} f(x)|^{p_0}\,dx \right)^{1/p_0}
\\
&
\hskip5cm+
\sum_{k\geq 1}\sum_{l\ge 2} \beta_{k+1}\,\alpha_{k+l+1} \left(\aver{2^{k+l+1} Q} |B_{Q_i^t} f(x)|^{p_0}\,dx \right)^{1/p_0}
\\
&
\quad
\lesssim
\sum_{k\geq 2} \tilde{\beta}_{k} \left(\aver{2^{k} Q} |B_{Q_i^t} f(x)|^{p_0}\,dx \right)^{1/p_0}.
\end{align*}
Here we have used that $\{\beta_k\}_{k\ge 1}\in \ell^1$ and we have also set
$\tilde{\beta}_2=\beta_2$, $\tilde{\beta}_3=\max\{\beta_2,\beta_3\}$, and $\tilde{\beta}_k=\max\{\beta_{k-1},\beta_k,\alpha_k\}$ for $k\ge 4$.
Next we pick $k_i$ such that
\begin{equation}\label{lado-i}
2^{k_i}\, \ell(Q_i^t) \leq \ell(Q) < 2^{k_i+1}\, \ell(Q_i^t).
\end{equation}
Thus, since $\ell(Q_i^t)<\ell(Q)$ and $Q_i^t$ is a dyadic sub-cube of $Q$, we obtain $\ell(Q_i^t) \leq \ell(Q)/2$ and
\begin{equation}\label{Q_it-Q}
k_i \geq 1, \qquad 2^{k_i}\, Q_i^t \subset 2\, Q \qquad\hbox{and}\qquad  Q\subset 2^{k_i+2}\, Q_i^t.
\end{equation}
All these and \eqref{hyp:BQ:2k-Q} give
$$
\left(\aver{2^{k} Q} |B_{Q_i^t} f(x)|^{p_0}\,dx \right)^{1/p_0}
\!\!\le
4^{n/p_0} \left(\aver{2^{k+k_i+2} Q_i^t} |B_{Q_i^t} f(x)|^{p_0}\,dx \right)^{1/p_0}
\!\!\le
4^{n/p_0} a(2^{k+k_i+2} Q_i^t).
$$
Using this estimate and the fact that $\tilde{a}\in D_{q}\subset D_0$, we conclude that
\begin{equation}\label{use-D0}
\left( \aver{2Q_i^t} \big| B_{Q_i^t}^2(\Id-B_Q^2) f(x) \big|^{q_0} dx \right)^{1/q_0}
\lesssim
\sum_{k\geq 2} \tilde{\beta}_k a(2^{k+k_i+2} Q_i^t)
\lesssim
\tilde{a}(2^{k_i} Q_i^t)
\lesssim
\tilde{a}(2\,Q).
\end{equation}

On the other hand,
\begin{multline*}
\left( \aver{2Q_i^t} \big| (\Id-B_{Q_i^t}^2)B_Q^2 f(x) \big|^{q_0} dx \right)^{1/q_0}
\le
\left( \aver{2Q_i^t} \big| (\Id-B_{Q_i^t}^2)\big(\chi_{2^{k_i} Q_i^t}B_Q^2 f)(x) \big|^{q_0} dx \right)^{1/q_0}
\\
+
\left( \aver{2Q_i^t} \big| (\Id-B_{Q_i^t}^2)\big(\chi_{(2^{k_i} Q_i^t)^c}B_Q^2 f)(x) \big|^{q_0} dx \right)^{1/q_0}
=I_1+I_2.
\end{multline*}
Notice that $\Id-B_{Q_i^t}^2=A_{Q_i^t}(2\,\Id-A_{Q_i^t})$. Using first  \eqref{AQ:off:p-q:j=1} and then \eqref{AQ:off:p-p} we obtain
\begin{align}
&\left( \aver{2Q_i^t} \big| (\Id-B_{Q_i^t}^2)h(x) \big|^{q_0} dx \right)^{1/q_0}
\le
\sum_{k\ge 2} \alpha_k \left( \aver{2^{k}\,Q_i^t} \big| (2\,\Id-A_{Q_i^t})h(x) \big|^{p_0} dx \right)^{1/p_0}
\nonumber
\\
&\qquad
\le
2\,\sum_{k\ge 2} \alpha_k \left( \aver{2^{k}\,Q_i^t} |h(x)|^{p_0} dx \right)^{1/p_0}
+
C\,\sum_{k\ge 2} \alpha_k \left( \aver{2^{k+1}\,Q_i^t} |h(x)|^{p_0} dx \right)^{1/p_0}
\nonumber
\\
&\qquad\qquad\qquad
+
\sum_{k\ge 2}\sum_{l\ge 2} \alpha_k\,\alpha_{l+k} \left( \aver{2^{l+k}\,Q_i^t} |h(x)|^{p_0} dx \right)^{1/p_0}
\nonumber
\\
&\qquad
\lesssim
\sum_{k\ge 2}\hat{\alpha}_k\left( \aver{2^{k}\,Q_i^t} |h(x)|^{p_0} dx \right)^{1/p_0}, \label{off:BQ-2}
\end{align}
where we have used that $\{\alpha_k\}_{k\ge 2}\in\ell^1$ and we have taken
$\hat{\alpha}_2=\alpha_2$ and $\hat{\alpha}_k=\max\{\alpha_{k-1},\alpha_k\}$ for $k\ge 3$.

We use \eqref{off:BQ-2} to estimate $I_1$. Notice that $2^{k_i}\, Q_i^t \subset 2\, Q$ implies that if $x\in 2^{k_i} Q_i^t$, then $|B_Q^2 f(x)|= G(x)$.  Hence,
\begin{align*}
I_1
& \le \sum_{k\ge 2}\hat{\alpha}_k\left( \aver{2^{k} Q_i^t} \big| \chi_{2^{k_i} Q_i^t}(x) B_Q^2 f(x) \big|^{p_0} dx \right)^{1/p_0}
\\
& \le
\sum_{k\ge 2}\hat{\alpha}_k\left( \aver{2^{k} Q_i^t} G(x)^{p_0} dx \right)^{1/p_0}
\lesssim
t\,\sum_{k\ge 2}\hat{\alpha}_k
\lesssim t.
\end{align*}
Here we have used that $\hat{\alpha}_k$ is a fast decay sequence and also that from the Whitney covering lemma it follows that $10\, Q_i^t \cap \Omega_t^c\neq\emptyset$ and therefore, for every $k\ge 0$,
\begin{equation}\label{Whitney-prop}
\left( \aver{2^{k} Q_i^t} G(x)^{p_0} dx \right)^{1/p_0} \le 10^{n/p_0} t,\end{equation}
see \cite[Theorem 5.2, Lemma 5.3]{JM}.
On the other hand, to estimate $I_2$ we use again \eqref{off:BQ-2}, that $2^{k}\,Q_i^t\subset 2^{k}\,Q$ and Lemma \ref{lemma:BQ2-hyp}:
\begin{align*}
I_2
&\le
\sum_{k\ge 2}\hat{\alpha}_k\left( \aver{2^{k} Q_i^t} \big| \chi_{(2^{k_i} Q_i^t)^c}(x) B_Q^2 f(x) \big|^{p_0} dx \right)^{1/p_0}
\\
&\le
\sum_{k> k_i}\hat{\alpha}_k\left( \aver{2^{k} Q_i^t} \big| B_Q^2 f(x) \big|^{p_0} dx \right)^{1/p_0}
\\
&\lesssim
\sum_{k> k_i}\hat{\alpha}_k2^{k_i\,n/p_0}\left( \aver{2^{k} Q} \big| B_Q^2 f(x) \big|^{p_0} dx \right)^{1/p_0}
\\
&\le
\sum_{k\ge 2}\hat{\alpha}_k2^{k\,n/p_0}a(2^{k+1} Q)+
\sum_{k\ge 2}\sum_{l\geq 2} \hat{\alpha}_k2^{k\,n/p_0}\,\alpha_{l+k}\, a(2^{l+k} Q)
\\
&\lesssim
\sum_{k\ge 3}\tilde{\alpha}_ka(2^{k} Q)
\\
&\le
\tilde{a}(2\,Q).
\end{align*}
where $\tilde{\alpha}_3=\hat{\alpha}_2=\alpha_2$ and
$$
\tilde{\alpha}_k=\max\{2^{k\,n/p_0}\,\hat{\alpha}_{k-1},\alpha_k\}=\max\{2^{k\,n/p_0}\,\alpha_{k-2},2^{k\,n/p_0}\,\alpha_{k-1},\alpha_k\}
$$
for $k\ge 4$. Here we have used that $\{2^{kn/p_0}\,\hat{\alpha}_k \}_{k\ge 2}\in\ell^1$, which in turn is equivalent to $\{2^{kn/p_0}\,\alpha_k\}_{k\ge 2}\in\ell_1$.

Gathering the obtained estimates the proof is complete.
\end{proof}

\begin{remark}\label{remark:distributions}
For some applications it may be interesting to extend the class of ``functions'' in our main results. We have assumed (for simplicity) that $\mathcal{F}$ is a given family of functions in $L^{p_0}_{\rm loc}(\re^n)$. However, the previous proof can be carried out with no change and thus Theorem \ref{theorem:Lp} remains valid if $\mathcal{F}$ is a family of distributions such that $B_Q f$ is globally well defined in the sense of distributions and $B_Q f\in L^{p_0}_{\rm loc}(\re^n)$ for every $f\in\mathcal{F}$. The same applies to Theorems \ref{theorem:Lp:two-functionals}, \ref{theorem:Lp-alternative}, \ref{theorem:Lp-alternative-bis} and \ref{theorem:Lp-w}. Details are left to the reader.
\end{remark}

\begin{remark}\label{remark:sequences}
From the proof we can easily see how $\tilde{\gamma}_k$ is chosen. The sequences $\{\alpha_k\}_{k\ge 1}$ and $\{\beta_k\}_{k\ge 1}$ are given in $O(p_0,q_0)$. We have used that $\{\alpha_k\}_{k\ge 2}$ and $\{\beta_k\}_{k\ge 2}$ are in $\ell^1$.
We have set (modulo multiplying constants)
$$
\tilde{\beta}_2=\beta_2,
\qquad\quad
\tilde{\beta}_3=\max\{\beta_2,\beta_3\},
\qquad\quad
\tilde{\alpha}_2=0,
\qquad\quad
\tilde{\alpha}_3=\alpha_2,
$$
and, for $k\ge 4$,
$$
\tilde{\beta}_k=\max\{\beta_{k-1},\beta_k,\alpha_k\},
\qquad\quad
\tilde{\alpha}_k=\max \left\{  2^{\frac{k\,n}{p_0}}\alpha_{k-2},\ 2^{\frac{k\,n}{p_0}}\alpha_{k-1},\alpha_k \right\}.
$$
Then we define $\tilde{a}$ using the coefficients $\tilde{\gamma}_k$ (details are left to the reader):
$$
\tilde{\gamma}_1\gtrsim 1,
\quad
\tilde{\gamma}_2\gtrsim \max\{1,\alpha_2, \alpha_3\},
\quad
\tilde{\gamma}_3\gtrsim \max\{\alpha_2, \alpha_3, \alpha_4\},
\quad
\tilde{\gamma}_4\gtrsim \max\{\alpha_3, \alpha_4, \alpha_5,\beta_2\},
$$
and for $k\ge 5$
$$
\tilde{\gamma}_k\gtrsim \max\{\alpha_{k-2}, 2^{k\,n/p_0}\,\alpha_{k-1}, 2^{k\,n/p_0}\,\alpha_{k}, \alpha_{k+1},\beta_{k-3},\beta_{k-2}\}.
$$
Note that this choice guarantees that $\tilde{\gamma}_k\gtrsim \alpha_{k+1}$.

Let us notice that in the applications below $\alpha_k\approx\beta_k$, both sequences are quasi-decreasing and also $ 2^{k\,n/p_0}\,\alpha_{k}\lesssim \alpha_{k-1}$. Then we have
$\tilde{\gamma}_1\gtrsim 1$, $\tilde{\gamma}_2\gtrsim \max\{1,\alpha_2\}$, $\tilde{\gamma}_k\gtrsim \alpha_2$ for $k=3,4$ and $\tilde{\gamma}_k\gtrsim \alpha_{k-3}$ for $k\ge 5$.
\end{remark}

\begin{remark}\label{remark:sequences:Dq}
Notice that in the proof we have used the condition $\tilde{a}\in D_q$ in \eqref{use-Dq}. However, in the last inequality of \eqref{eq:atilde:Dq} we can replace the last term by the smaller functional $\hat{a}(Q_i^t)$ defined by
$\hat{a}(Q)=a(4\,Q)+\sum_{k=2}^\infty \alpha_{k+1}\,a(2^{k+1}\,Q)$. In this way, we can replace the hypothesis $\tilde{a}\in D_q$ by $\hat{a}\in D_q$ and obtain the same inequality \eqref{use-Dq}
after using that $\hat{a}(2\,Q)\lesssim \tilde{a}(2\,Q)$. Let us observe in \eqref{use-D0} we have also used $a\in D_q$ in a very mild manner via the $D_0$ condition. Thus, we would need to add the hypothesis that the functional $\sum_{k\ge 0} \tilde{\beta}_k\, a(2^k\,Q)$ is in $D_0$. As observed this follows, for instance, if that functional satisfies some $D_r$  condition or more in particular if $a\in D_0$. In applications it could be easier to check these new hypotheses, since the coefficients $\alpha_k$ decay faster than $\tilde{\gamma}_k$.
\end{remark}

\subsection{Proof of Theorem \ref{theorem:Lp:two-functionals}}

The proof of this result (including the choice of $\tilde{\gamma}_k$ in Remark \ref{remark:sequences}) is very similar to the argument given above, so we just give the main changes. We first emphasize that \eqref{eq:DDq} implies
\begin{equation}\label{eq:aa}
\tilde{a}(Q)\lesssim \bar{a}(Q)
\end{equation}
for any cube $Q$ (we just take a family consisting only of the cube $Q$).

We follow the proof of Theorem \ref{theorem:Lp}. As done in Lemma \ref{lemma:BQ2-reduction} it suffices to obtain that
\begin{equation} \label{desired2:BQ^2}
\|B_Q^2 f\|_{L^{q,\infty}, Q} \lesssim \bar{a}(2Q),
\end{equation}
provided $\tilde{\gamma}_k\gtrsim \alpha_{k+1}$. Note that the last term in \eqref{eq:AQBQ} is $\tilde{a}(2\,Q)$ and \eqref{eq:aa} implies $\tilde{a}(2\,Q)\lesssim\bar{a}(2\,Q)$.
Then we proceed as in the previous proof, fix $Q$ and assume that the right hand side of \eqref{conclusion:theor-Lp:two-functionals} is finite, that is, $\bar{a}(2Q)<\infty$. As just observed $\tilde{a}(2Q)\le \bar{a}(2Q)<\infty$. We define $G$ and $\Omega_t$ as before and we observe that \eqref{G-Lp0} and \eqref{Omega-t} hold. We are going to obtain the following good-$\lambda$ inequality: given $0<\lambda<1$, for all $t>0$,
\begin{equation}\label{good-lambda-2-functionals}
|\Omega_{s\,t}\cap Q| \leq c\,\left[\left(\frac{\lambda}{s}\right)^{p_0}+s^{-q_0}\right]\, |\Omega_t\cap Q| + c\,
\bigg(\frac{c_0\,\bar{a}(2\,Q)}{\lambda\,t}\bigg)^{q}\,|Q|.
\end{equation}
With this in hand we can obtain that $\|M_{p_0}G\|_{L^{q,\infty},Q}\lesssim\bar{a}(2Q)$ which in turns yields \eqref{desired2:BQ^2}.

As before  \eqref{good-lambda-2-functionals} is trivial when $0<t\lesssim \bar{a}(2Q)$. Otherwise we repeat the previous steps and it suffices to obtain that, under the present hypotheses, Propositions \ref{prop:I} and \ref{prop:II} hold replacing $\tilde{a}$ by $\bar{a}$. Regarding Proposition \ref{prop:I}, the estimate for $\Sigma_1$ is the same.  For $\Sigma_2$ we still get \eqref{eq:atilde:Dq} which together with \eqref{eq:DDq} yield the following analogue of \eqref{use-Dq}:
$$
\Sigma_2\leq \sum_{i\in \Gamma_2} |Q_i^t|
\leq
\sum_{i\in \Gamma_2} \left(\frac{\tilde{a}(Q_i^t)}{\lambda t}\right)^{q} |Q_i^t|
\lesssim
\left(\frac{\bar{a}(2\,Q)}{\lambda t}\right)^{q} |Q|.
$$

Regarding Proposition \ref{prop:II} we note that in \eqref{use-D0} we have used that $\tilde{a}\in D_q\subset D_0$. In this case the same computations yield
$$
\left( \aver{2Q_i^t} \big| B_{Q_i^t}^2(\Id-B_Q^2) f(x) \big|^{q_0} dx \right)^{1/q_0}
\le
\sum_{k\geq 1} \tilde{\beta}_k a(2^{k+k_i+3} Q_i^t)
\le
\tilde{a}(2^{k_i} Q_i^t)
\lesssim
\bar{a}(2\,Q).
$$
In the last inequality we have used \eqref{eq:DDq} and the facts that $2^{k_i} Q_i^t\subset 2Q$ and $|2^{k_i} Q_i^t|\approx |2Q|$ by \eqref{Q_it-Q} and \eqref{lado-i}. The rest of the argument remains the same and at the end of the proof we use \eqref{eq:aa} to obtain $I_2\lesssim \tilde{a}(2Q)\lesssim \bar{a}(2Q)$.
\qed

\subsection{Proof of Theorem \ref{theorem:Lp-alternative}}
The proof follows the same scheme as before, the main difference is that we replace everywhere $B_Q^2$ by $B_Q$. We set $\tilde{a}(Q)$ equal to the right hand side of \eqref{conclusion:theor-Lp-alternative}
and assume that $\tilde{a}(Q)<\infty$. Let us set $G(x)=|B_Q f(x)|\,\chi_{2\,Q}(x)$ and define the corresponding $\Omega_t$. Then we get the following substitute of \eqref{G-Lp0}:
\begin{equation}\label{G-Lp0bis}
\left(\aver{2\,Q} G^{p_0}\,dx\right)^{1/p_0}
=
\left(\aver{2\,Q} |B_Q f(x)|^{p_0}\,dx\right)^{1/p_0}
\le
a(2Q)
\le\tilde{a}(Q)
.
\end{equation}
Note that this implies that $G\in L^{p_0}$ and also \eqref{Omega-t} with $\tilde{a}(Q)$ in place of $\tilde{a}(2\,Q)$. Then we show that for $s$ large enough and every $0<\lambda<1$ and $t>0$,
$$
|\Omega_{s\,t}\cap Q| \leq c\,\left[\left(\frac{\lambda}{s}\right)^{p_0}+s^{-q_0}\right]\, |\Omega_t\cap Q| + c\,
\bigg(\frac{c_0\,\tilde{a}(Q)}{\lambda\,t}\bigg)^{q}\,|Q|.
$$
This implies as before the desired estimate.

To obtain the good-$\lambda$ inequality we only  consider the case $t\gtrsim \tilde{a}(Q)$ (the other case is trivial).
The proof follows the same path (replacing $B_R^2$ by $B_R$) and then we have to estimate
$$
I:= \sum_{i: Q_i^t\subset Q} \big| \big\{x\in Q_i^t: M_{p_0} \big(|B_{Q_i^t}f|\, \chi_{2\,Q_i^t}\big)(x)> s\,t/4\big\}\big|
$$
and
$$
II:= \sum_{i: Q_i^t\subset Q} \big| \big\{x\in Q_i^t: M_{p_0} \big(|B_{Q_i^t}f-B_Q f|\, \chi_{2\,Q_i^t}\big)(x)> s\,t/4\big\}\big|
.
$$

For $I$ we proceed as in Proposition \ref{prop:I} (with $B_{Q_i}$ replacing $B_{Q_i}^2$). The estimate for $\Sigma_1$ is the same. For $\Sigma_2$ we have that if $i\in\Gamma_2$, then
\begin{equation}\label{estimate-comm-sigma2}
(\lambda\,t) < \left(\aver{2\,Q_i^t}|B_{Q_i^t}f|^{p_0}\,dx\right)^{1/p_0} \leq
a(2\,Q_i^t).
\end{equation}
As in \cite[Section 5.1]{JM} we have that the family $\{2\,Q_i^t\}_i\subset 2Q$ splits into $c_n$ (with $c_n\le 144^n$) families $\mathcal{E}_j$ of pairwise disjoint cubes.
Then we use that $a\in D_q$ in each family to conclude that
\begin{align*}
\Sigma_2 & \leq \sum_{i\in \Gamma_2} |Q_i^t|
\le
\sum_{j=1}^{c_n}\sum_{Q_i\in\mathcal{E}_j} \left(\frac{a(2Q_i^t)}{\lambda t}\right)^{q} |2\,Q_i^t|
\\
& \le
\|a\|_{D_q}^q\,c_n
\left(\frac{a(2\,Q)}{\lambda t}\right)^{q} |2\,Q|
\lesssim
\left(\frac{\tilde{a}(Q)}{\lambda t}\right)^{q} |Q|.
\end{align*}

The main changes come into the estimate of $II$. We adapt the proof of Proposition  \ref{prop:II} as follows: note that for $x\in 2\,Q_i^t$ by \eqref{replace-comm} and since $Q_i^t\subset Q$ we obtain
$$
B_{Q_i^t}(\Id-B_Q)f(x)
=(\Id-A_{Q_i^t})A_Qf(x)
=
A_Qf(x)-A_{Q_i^t}A_Qf(x)
=
0
$$
Then, for every $x\in 2\,Q_i^t$,
\begin{equation}\label{eqn:commu-reduction}
B_{Q_i^t}f(x) - B_{Q}f(x) = B_{Q_i^t}(\Id-B_Q)f(x) - (\Id-B_{Q_i^t})B_Qf(x)
=
-A_{Q_i^t}\,B_Qf(x).
\end{equation}
Let us observe that the term that has disappeared corresponds to the first term in \eqref{eqn:Bqi-BQ:square}, and to estimate that quantity we used $(a)$ and $(d)$ in Definition \ref{def:Off},

Next we pick $k_i$ as before and use \eqref{AQ:off:p-q:j=1} (which follows from $(c)$) to obtain  that
\begin{multline*}
\left( \aver{2Q_i^t} \big| B_{Q_i^t}f(x) - B_{Q}f(x) \big|^{q_0} dx \right)^{1/q_0}
=
\left( \aver{2Q_i^t} \big| A_{Q_i^t}\,B_Qf(x) \big|^{q_0} dx \right)^{1/q_0}
\\
\le
\sum_{k\ge 2}
\alpha_k\left( \aver{2^kQ_i^t} |B_Qf(x)|^{p_0} dx \right)^{1/p_0}
=
\sum_{k\le k_i}\dots+\sum_{k>k_i}\dots
=I_1+I_2.
\end{multline*}
For $I_1$ we notice that $2^{k_i}Q_i^t\subset 2\,Q$ and then $B_Qf(x)=G(x)$ for $x\in 2^{k}Q_i^t$, $k\le k_i$ and therefore
\begin{align*}
I_1
\leq
\sum_{k\le k_i}
\alpha_k\left( \aver{2^kQ_i^t} | G(x)|^{p_0} dx \right)^{1/p_0}
\lesssim
t\,\sum_{k\ge 2}\alpha_k
\lesssim t.
\end{align*}
Here the second inequality follows as in \eqref{Whitney-prop} and we have use that $\{\alpha_k\}_{k\ge 2}\in\ell^1$.

For $I_2$ we use that $2^{k}\,Q_i^t\subset 2^{k}\,Q$, hence \eqref{hyp:BQ:2k-Q} yields
\begin{align}
I_2
&\le
\sum_{k> k_i}\alpha_k2^{k_i\,n/p_0}\left( \aver{2^{k} Q} \big| B_Qf(x) \big|^{p_0} dx \right)^{1/p_0}
\le
\sum_{k\ge 2}\alpha_k 2^{k\,n/p_0}a(2^{k} Q)
\le
\tilde{a}(Q). \label{eq:I2}
\end{align}

Gathering all the obtained estimates the proof is completed.

\qed

\subsection{Proof of Theorem \ref{theorem:Lp-alternative-bis}}
Proceeding as in the previous proof we set $\tilde{a}(Q)=C\,a(2\,Q)$ which is assumed to be finite and define $G$. Note that we have the analogue of \eqref{G-Lp0} as a consequence of \eqref{hyp:BQ:2k-Q-bis}:
$$
\left(\aver{2\,Q} G^{p_0}\,dx\right)^{1/p_0}
=
\left(\aver{2\,Q} |B_Q f(x)|^{p_0}\,dx\right)^{1/p_0}
\le
a(2Q)
\le \tilde{a}(Q)
.
$$
We obtain the very same good-$\lambda$ inequality as follows. In the case $t\gtrsim \tilde{a}(Q)$ we have to estimate $I$ and $II$ with the same definition. To estimate $I$ we proceed as before and $\Sigma_1$ is controlled in the same way. For $\Sigma_2$ we observe that \eqref{hyp:BQ:2k-Q-bis} also implies \eqref{estimate-comm-sigma2} and then the rest of argument is the same.

In this case the estimate for $II$ is much easier. We use \eqref{eqn:commu-reduction} (which follows from
\eqref{replace-comm}), \eqref{eqn:localization:AQ} and the fact that $2\,Q_i^t\subset 2\,Q$ to obtain that, for every $x\in 2\,Q_i^t$,
$$
B_{Q_i^t}f(x) - B_{Q}f(x)
=
-A_{Q_i^t}\,B_Qf(x)
=
-A_{Q_i^t}\big((B_Q f)\,\chi_{2\,Q_i^t}\big)(x)
=
-A_{Q_i^t}(G\,\chi_{2\,Q_i^t})(x)
$$
Then \eqref{AQ:on:p-q} and \eqref{Whitney-prop} yield
\begin{multline*}
\left( \aver{2Q_i^t} \big| B_{Q_i^t}f(x) - B_{Q}f(x) \big|^{q_0} dx \right)^{1/q_0}
\le
\alpha_2
\left( \aver{2\,Q_i^t} |G(x)|^{p_0} dx \right)^{1/p_0}
\le
10^{n/p_0}\,\alpha_2\,t.
\end{multline*}
Gathering the obtained estimates the proof is complete.
\qed

\subsection{Proof of Theorem \ref{theorem:Lp-w}}
The proof follows the scheme of the proof of Theorem \ref{theorem:Lp} and therefore we only point out the main changes.

Let us first observe that without loss of generality we can assume that $q_0<\infty$: indeed we can replace $q_0$ by $\tilde{q}_0$, chosen in such a way that $q<\tilde{q}_0<\infty$ and $w\in RH_{(\tilde{q}_0/q)'}$ since we have that $O(p_0,q_0)$ implies $O(p_0,\tilde{q}_0)$ and also that if $w\in RH_1=A_\infty$ then $w\in RH_{r}$ for some $r$ very close to $1$. Next we  recall the following well-known fact about reverse H\"older classes: $w\in RH_{(q_0/q)'}$ implies that there exists $1<r<q_0/q$ such that $w\in RH_{r'}$. We will use this  below.

We first note that under the present assumptions Lemma \ref{lemma:BQ2-hyp} holds. On the other hand, we need to adapt the proof of Lemma \ref{lemma:BQ2-reduction} to our current settings. Notice that \eqref{eq:AQBQ} holds with $q_0$  in place of $q$ on the left hand. This and the fact that $w\in RH_{(q_0/q)'}$ imply$$
\|A_Q B_Q f\|_{L^{q,\infty}(w),Q}
\le
\|A_Q B_Q f\|_{L^{q}(w),Q}
\le
C_w\,\|A_Q B_Q f\|_{L^{q_0},Q}
\lesssim
\sum_{k=1} ^\infty \tilde{\gamma}_k\,a(2^{k+1}\,Q).
$$
Then we obtain that \eqref{conclusion:theor-Lp-w} follows from
\begin{equation} \label{desired:BQ^2-w}
\|B_Q^2 f\|_{L^{q,\infty}(w), Q}
\lesssim
\sum_{k=1} ^\infty \tilde{\gamma}_k\,a(2^{k+1}\,Q),
\end{equation}
with $\tilde{\gamma}_k\gtrsim \alpha_{k+1}$ for $k\ge 1$.

As in the proof of Theorem \ref{theorem:Lp} we define $\tilde{a}$ so that the right hand side of \eqref{desired:BQ^2-w} is $\tilde{a}(2\,Q)$. We take the same function $G$ and observe that \eqref{G-Lp0} and \eqref{Omega-t} hold. We are going to show that for some large enough $s>1$ the following weighted good-$\lambda$ inequality holds:
given $0<\lambda<1$, for all $t>0$,
\begin{equation}\label{good-lambda-w}
w(\Omega_{s\,t}\cap Q) \leq c\,\left[\left(\frac{\lambda}{s}\right)^{p_0\,q/q_0}+s^{-q_0/r}\right]\, w(\Omega_t\cap Q) + c\,
\bigg(\frac{c_0\,\tilde{a}(2\,Q)}{\lambda\,t}\bigg)^{q}\,w(Q).
\end{equation}
Once this is obtained the desired estimate follows as before where in this case we have to pick $s$ large enough and $\lambda$ small enough such that
$$
c\, \left[s^{q-p_0\,q/q_0}\lambda^{p_0\,q/q_0}+s^{q-q_0/r}\right] \leq \frac{1}{2}.
$$
Note that this can be done since $1<r<q_0/q$ yields $q-q_0/r<0$.

Next we obtain \eqref{good-lambda-w}. The case $0<t\lesssim \tilde{a}(2\,Q)$ is trivial. Otherwise, for $t\gtrsim \tilde{a}(2\,Q)$ we proceed as before replacing the Lebesgue measure by $w$ and we conclude that
\begin{multline*}
w(\Omega_{s\,t}\cap Q) \leq
\sum_{i: Q_i^t\subset Q} w\big( \big\{x\in Q_i^t: M_{p_0} \big(|B_{Q_i^t}^2f|\, \chi_{2\,Q_i^t}\big)(x)> s\,t/4\big\} \big)
\\
 +
\sum_{i: Q_i^t\subset Q} w\big( \big\{x\in Q_i^t: M_{p_0} \big(|B_{Q_i^t}^2f-B_Q^2f|\, \chi_{2\,Q_i^t}\big)(x)> s\,t/4\big\} \big)
=I+II.
\end{multline*}

We first estimate $II$. We claim that Proposition \ref{prop:II} holds under the present assumptions. Indeed, a careful examination of the proof shows that the only estimate that needs to be checked is the last inequality in \eqref{use-D0} where we used that $\tilde{a}\in D_q$ to obtain that $\tilde{a}\in D_0$. Here, we have $\tilde{a}\in D_q(w)$ and this also implies $\tilde{a}\in D_0$ since $w\,dx$ is a doubling measure (see the comments after Definition \ref{def:Dr}).

Next  we use \eqref{eq:RH-sets} with the exponent $p=r'$ given above, the strong-type $(q_0,q_0)$ (with respect to the Lebesgue measure) of $M_{p_0}$, Proposition \ref{prop:II} and the fact that $\tilde{a}(2\,Q)\lesssim t$:
\begin{align*}
II
&=
\sum_{i: Q_i^t\subset Q} \frac{w\big( \big\{x\in Q_i^t: M_{p_0} \big(|B_{Q_i^t}^2f-B_Q^2f|\, \chi_{2\,Q_i^t}\big)(x)> s\,t/4\big\} \big)}{w(Q_i^t)}\,w(Q_i^t)
\\
&\lesssim
\sum_{i: Q_i^t\subset Q} \left(\frac{\big| \big\{x\in Q_i^t: M_{p_0} \big(|B_{Q_i^t}^2f-B_Q^2f|\, \chi_{2\,Q_i^t}\big)(x)> s\,t/4\big\} \big|}{|Q_i^t|}\right)^{1/r}\,w(Q_i^t)
\\
&\lesssim
\sum_{i: Q_i^t\subset Q} \left(\frac{1}{(s\,t)^{q_0}}\,\aver{2\,Q_i^t}  \big|B_{Q_i^t}^2f-B_Q^2f\big|^{q_0}\,dx\right)^{1/r}\,w(Q_i^t)
\\
&\lesssim
\sum_{i: Q_i^t\subset Q} \left(\frac{\big(\tilde{a}(2\,Q)+t\big)^{q_0}}{(s\,t)^{q_0}}\right)^{1/r}\,w(Q_i^t)
\\
&\lesssim
s^{-q_0/r}\,w(\Omega_t\cap Q).
\end{align*}
To complete the proof of \eqref{good-lambda-w} we just need to use the analogue of Proposition \ref{prop:I} given next. \qed

\begin{proposition} \label{prop:I:w}
The term $I$ can be estimated as follows:
$$
I \lesssim \left(\frac{\lambda}{s}\right)^{p_0\,q/q_0} w(\Omega_t \cap Q) + \left( \frac{\tilde{a}(2\,Q)}{\lambda t} \right)^{q} w(Q).
$$
\end{proposition}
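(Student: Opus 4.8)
The plan is to follow the proof of Proposition \ref{prop:I}, keeping the same splitting $I=\Sigma_1+\Sigma_2$ over the index sets $\Gamma_1$ and $\Gamma_2$ introduced there, but measuring the relevant level sets with $w$ and converting the Lebesgue-measure bounds obtained there into $w$-measure bounds via \eqref{eq:RH-sets}. Recall that $w\in RH_{(q_0/q)'}$, so \eqref{eq:RH-sets} with $p=(q_0/q)'$ reads $w(E)/w(Q')\lesssim (|E|/|Q'|)^{q/q_0}$ for every cube $Q'$ and every measurable $E\subset Q'$; recall also that $RH_{(q_0/q)'}\subset A_\infty$, so $w\,dx$ is a doubling measure.

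First I would treat the indices $i\in\Gamma_1$. Writing $E_i=\{x\in Q_i^t: M_{p_0}(|B_{Q_i^t}^2f|\,\chi_{2Q_i^t})(x)>s\,t/4\}$, the weak type $(1,1)$ of the Hardy--Littlewood maximal function and the definition of $\Gamma_1$ give, exactly as in the proof of Proposition \ref{prop:I},
$$
|E_i|\lesssim \frac{1}{(s\,t)^{p_0}}\int_{2Q_i^t}|B_{Q_i^t}^2f|^{p_0}\,dx\lesssim \Big(\frac{\lambda}{s}\Big)^{p_0}|Q_i^t|.
$$
Applying \eqref{eq:RH-sets} to $E_i\subset Q_i^t$ upgrades this to $w(E_i)\lesssim (\lambda/s)^{p_0\,q/q_0}\,w(Q_i^t)$, and, since the Whitney cubes $\{Q_i^t\}$ are pairwise disjoint and contained in $\Omega_t\cap Q$, summing yields $\Sigma_1\lesssim (\lambda/s)^{p_0\,q/q_0}\,w(\Omega_t\cap Q)$, the first term in the claim.

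Next I would treat the indices $i\in\Gamma_2$, bounding the $w$-measure of the corresponding level set trivially by $w(Q_i^t)$. As in \eqref{eq:atilde:Dq}, Lemma \ref{lemma:BQ2-hyp} applied on $Q_i^t$ together with the definition of $\Gamma_2$ give $\lambda\,t<\tilde{a}(Q_i^t)$ for the chosen sequence $\{\tilde{\gamma}_k\}$. Using then that $\tilde{a}\in D_q(w)$, that $\{Q_i^t\}_{i\in\Gamma_2}$ is a pairwise disjoint family contained in $2\,Q$, and that $w$ is doubling, I would obtain
$$
\Sigma_2\le \sum_{i\in\Gamma_2}w(Q_i^t)\le \sum_{i\in\Gamma_2}\Big(\frac{\tilde{a}(Q_i^t)}{\lambda\,t}\Big)^{q}w(Q_i^t)\lesssim \Big(\frac{\tilde{a}(2\,Q)}{\lambda\,t}\Big)^{q}w(Q),
$$
which is the second term. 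Adding the two estimates proves the proposition. The only genuinely new ingredient relative to Proposition \ref{prop:I} is the passage from Lebesgue measure to $w$ on the level sets, and \eqref{eq:RH-sets} supplies precisely this, so I do not expect a real obstacle; the one point to keep track of is the bookkeeping of exponents, since the reverse H\"older hypothesis on $w$ produces exactly the power $q/q_0$, which is why the first term of \eqref{good-lambda-w} and of the present statement carries $(\lambda/s)^{p_0\,q/q_0}$ rather than $(\lambda/s)^{p_0}$.
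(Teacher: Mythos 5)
Your proposal is correct and follows essentially the same route as the paper's own proof of Proposition~\ref{prop:I:w}: the same split into $\Gamma_1,\Gamma_2$, the same use of the weak-$(1,1)$ bound for $M_{p_0}$ and \eqref{eq:RH-sets} (with $p=(q_0/q)'$, so $1/p'=q/q_0$) on the Whitney cubes to handle $\Sigma_1$, and the same combination of Lemma~\ref{lemma:BQ2-hyp}, the $D_q(w)$ hypothesis and doubling of $w$ to handle $\Sigma_2$. The only cosmetic difference is that you first bound $|E_i|$ and then upgrade via \eqref{eq:RH-sets}, while the paper plugs the reverse H\"older ratio into the sum directly; the estimates and exponents are identical.
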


\begin{proof}[Proof of Proposition \ref{prop:I:w}]
We follow the proof of Proposition  \ref{prop:I}. We take the same sets $\Gamma_1$, $\Gamma_2$ and define the corresponding sums $\Sigma_1$, $\Sigma_2$. For $\Sigma_1$ we use \eqref{eq:RH-sets} with  $p=(q_0/q)'$, that the maximal function is of weak-type $(1,1)$, the definition of the set $\Gamma_1$ and then the fact that the cubes $Q_i^t$ are pairwise disjoint and they are all contained in $\Omega_t \cap Q$:
\begin{align*}
\Sigma_1
&
=
\sum_{i: Q_i^t\in\Gamma_1} \frac{w\big( \big\{x\in Q_i^t: M_{p_0} \big(|B_{Q_i^t}^2f|\, \chi_{2\,Q_i^t}\big)(x)> s\,t/4\big\} \big)}{w(Q_i^t)}\,w(Q_i^t)
\\
&\lesssim
\sum_{i: \in\Gamma_1} \left(\frac{\big| \big\{x\in Q_i^t: M_{p_0} \big(|B_{Q_i^t}^2f|\, \chi_{2\,Q_i^t}\big)(x)> s\,t/4\big\} \big|}{|Q_i^t|}\right)^{q/q_0}\,w(Q_i^t)
\\
&
\lesssim
\sum_{i: \in\Gamma_1} \left(
\frac{1}{(s\,t)^{p_0}}\,
\aver{2\,Q_i^t}
|B_{Q_i^t}^2f|^{p_0}\, dx
\right)^{q/q_0}\,w(Q_i^t)
\\
&\lesssim
\left(\frac{\lambda}{s}\right)^{p_0\,q/p_0} \sum_{i\in \Gamma_1} w(Q_i^t)\\
&\lesssim
\left(\frac{\lambda}{s}\right)^{p_0\,q/p_0} w(\Omega_t \cap Q).
\end{align*}
This corresponds to the first part of the desired inequality.

For the indices $i\in \Gamma_2$ we use Lemma \ref{lemma:BQ2-hyp}, which as mentioned above holds in the present situation, and obtain \eqref{eq:atilde:Dq}.
Then, using that  $\tilde{a}\in D_{q}(w)$ and that $\{Q_i^t\}_i\subset Q\subset 2\,Q$ is a collection of pairwise disjoint cubes, we conclude that
$$
\Sigma_2
\leq \sum_{i\in \Gamma_2} w(Q_i^t)
\leq
\sum_{i\in \Gamma_2} \left(\frac{\tilde{a}(Q_i^t)}{\lambda t}\right)^{q} w(Q_i^t)
\lesssim
\left(\frac{\tilde{a}(2\,Q)}{\lambda t}\right)^{q} w(Q).
$$
\end{proof}

\subsection{Proof of Theorem \ref{theor:exponential}, (\ref{conclusion:theor-expo})}
We use the previous ideas and combine them with \cite{Jim}, \cite{MP}. The proof follows that of Theorem \ref{theorem:Lp-alternative} (working with $B_Q$ in place of $B_Q^2$) and thus we skip some details. We can assume that $\|a\|_{D_\infty}=1$. Indeed, if we set $\hat{a}(Q)=\sup_{P \subset Q} a(P)$ we trivially have $a(Q)\le \hat{a}(Q)\le \|a\|_{D_\infty}\,a(Q)$ and $\|\hat{a}\|_{D_\infty}=1$. Thus we can work with $\hat{a}$ and the resulting estimate will immediately imply that for $a$.

Set $\tilde{a}(Q)$ equal to the right hand side of \eqref{conclusion:theor-expo} and assume that it is finite. Notice that trivially $\tilde{a}\in D_\infty$ with $\|\tilde{a}\|_{D_\infty}=1$. Let
$G(x)=|B_Q f(x)|\,\chi_{2\,Q}(x)$ and $\Omega=\{x\in \re^n: M_{p_0} G(x)>\tilde{a}(Q)\}$. Then, by taking the implicit constant in \eqref{conclusion:theor-expo} large enough, we have
$$
|\Omega|
\lesssim \frac1{\tilde{a}(Q)^{p_0}}\int_{2\,Q} |B_Q f(x)|^{p_0}\,dx
\lesssim
\left(\frac{{a(2Q)}}{\tilde{a}(Q)}\right)^{p_0}\,|Q|
\le
e^{-1}\,|Q|.
$$
This gives in particular that $\Omega$ is a proper subset of $\re^n$ and then we can cover it as before by a family of Whitney cubes $\{Q_i\}_i$ associated to the dyadic grid induced by $Q$. Let us write
$$
\varphi(t)
=\sup_{R\in\mathcal{Q}} \frac{|E(R,t)|}{|R|},
\qquad\quad
E(R,t)=\{x\in R: |B_R f(x)|>t\,\tilde{a}(R)\},
$$
where it is understood that $E(R,t)=\emptyset$ if $\tilde{a}(R)=\infty$.
If $t>1$ then for a.e. $x\in E(Q,t)$ we have that $x\in \Omega$. Thus,
$$
|E(Q,t)|
=
|E(Q,t)\cap \Omega|
=
\sum_i|E(Q,t)\cap Q_i|.
$$
We can restrict  the previous sum to those $Q_i$'s with $Q_i\cap Q\neq\emptyset$ (since $E(Q,t)\subset Q$) in which case we have $Q_i\subset Q$ (here we use that $Q_i$ are cubes in the dyadic grid induced by $Q$ and also that $|Q_i|\le |\Omega|<|Q|$). We claim that
\begin{equation}\label{pointwise-BQ-BQi}
\|B_Q f-B_{Q_i} f\|_{L^\infty(Q_i)}\le C_0\,\tilde{a}(Q).
\end{equation}
Assuming this momentarily, for a.e.~$x\in E(Q,t)\cap Q_i$  we have by the previous estimate
$$
t\,\tilde{a}(Q)
<|B_Q f(x)|
\le
|B_Q f(x)-B_{Q_i} f(x)|+|B_{Q_i}f(x)|
\le
C_0\,\tilde{a}(Q)+|B_{Q_i}f(x)|.
$$
Besides, since $a\in D_\infty$ with $\|a\|_{D_\infty}=1$ we have $\tilde{a}(Q_i)\le \tilde{a}(Q)$. Using all these, we have, for every $t>C_0$,
\begin{align*}
|E(Q,t)|
&\le
\sum_{i: Q_i\subset Q}|\{x\in Q_i: |B_{Q_i}f(x)|>(t-C_0)\tilde{a}(Q)\}
\\
&\le
\sum_{i: Q_i\subset Q}|\{x\in Q_i: |B_{Q_i}f(x)|>(t-C_0)\tilde{a}(Q_i)\}
\\
&\le
\varphi(t-C_0)\,\sum_{i: Q_i\subset Q} |Q_i|
\\
&\le
\varphi(t-C_0)\,|\Omega|
\\
&\le
\varphi(t-C_0)\,e^{-1}\,|Q|.
\end{align*}
Thus we have shown that $|E(Q,t)|/|Q|\le \varphi(t-C_0)\,e^{-1}$ for every $t>C_0$ and for every cube $Q$ for which $\tilde{a}(Q)<\infty$. Notice that the same estimates holds trivially if $\tilde{a}(Q)=\infty$ since in such a case $|E(Q,t)|=0$. Then we can take the supremum over all cubes an conclude that $\varphi(t)\le \varphi(t-C_0)\,e^{-1}$ for every $t>C_0$. Iterating this estimate and using that $\varphi(t)\le 1$ for every $t\ge 0$ we obtain $\varphi(t)\le e^{1-t/C_0}$ for all $t\ge 0$. With this in hand we can obtain the desired estimate. If $\tilde{a}(Q)=\infty$ there is nothing to prove. Otherwise, we have $|E(Q,t)|/|Q|\le \varphi(t)\le e^{1-t/C_0}$ for all $t\ge 0$ and therefore by taking $A=C_0(e+1)>C_0$ we have
\begin{multline*}
\aver{Q} \left(\exp\left(\frac{|B_Q f(x)|}{A\,\tilde{a}(Q)}\right)-1\right)\,dx
=
\int_0^\infty e^t\,\frac{|E(Q,A\,t)|}{|Q|}\,dt
\\
\le
e\,\int_0^\infty e^{-t\,(\frac{A}{C_0}-1)}\,dt
=
e\,\int_0^\infty e^{-e\,t}\,dt
=1.
\end{multline*}
This gives as desired $\|B_Q f\|_{\exp L,Q}\le A\,\tilde{a}(Q)$.

To complete the proof we need to show our claim \eqref{pointwise-BQ-BQi}. This is an $L^\infty$ analogue of Proposition \ref{prop:II}. Following the ideas in the proof of Theorem \ref{theorem:Lp-alternative} and using the commutative condition we clearly have
$$
B_{Q_i^t}f(x) - B_{Q}f(x)
=
B_{Q_i}(\Id-B_Q)f(x) - (\Id-B_{Q_i})B_Qf(x)
=
A_QB_{Q_i}f(x)-A_{Q_i}B_Q f(x),
$$
and we estimate each term in turn. We take $k_i$ as in \eqref{lado-i} so that \eqref{Q_it-Q} holds. For the first term we use that $Q_i\subset Q$ and \eqref{AQ:off:p-q:j=1} with $q_0=\infty$:
\begin{align*}
\|A_QB_{Q_i}f\|_{L^\infty(Q_i)}
\le
\|A_QB_{Q_i}f\|_{L^\infty(2\,Q)}
\le
\sum_{k\ge 2}\alpha_k\,\left(\aver{2^{k} Q} |B_{Q_i}f|^{p_0}\,dx \right)^{1/p_0}.
\end{align*}
Notice that \eqref{lado-i} and \eqref{Q_it-Q} imply
$$
2^k\,Q\subset 2^{k+k_i+2}\,Q_i\subset 2^{k+3}\,Q,
\qquad
|2^k\,Q|\approx |2^{k+k_i+2}\,Q_i|\approx |2^{k+3}\,Q|.
$$
This, \eqref{hyp:BQ:2k-Q} and the fact that $a\in D_\infty$ yield
\begin{multline*}
\|A_QB_{Q_i}f\|_{L^\infty(Q_i)}
\lesssim
\sum_{k\ge 2}\alpha_k\,\left(\aver{2^{k+k_i+2}\,Q_i} |B_{Q_i}f|^{p_0}\,dx \right)^{1/p_0}
\\
\le
\sum_{k\ge 2}\alpha_k\,a(2^{k+k_i+2}\,Q_i)
\le
\sum_{k\ge 2}\alpha_k\,a(2^{k+3}\,Q)
\le
\tilde{a}(Q).
\end{multline*}

Next, the second term is treated essentially as in the proof of Theorem \ref{theorem:Lp-alternative}:
we use \eqref{AQ:off:p-q:j=1} with $q_0=\infty$
\begin{multline*}
\|A_{Q_i}\,B_Qf\|_{L^\infty(Q_i)}
\le
\|A_{Q_i}\,B_Qf\|_{L^\infty(2\,Q_i)}
\le
\sum_{k\ge 2}\alpha_k\,\left(\aver{2^{k}\,Q_i} |B_{Q}f|^{p_0}\,dx \right)^{1/p_0}
\\
=
\sum_{k\le k_i}\dots+\sum_{k>k_i}\dots
=I_1+I_2.
\end{multline*}
For $I_1$, using that $2^{k_i}Q_i^t\subset 2\,Q$ we obtain $B_Qf(x)=G(x)$ for $x\in 2^{k}Q_i^t$, $k\le k_i$ and therefore
\begin{align*}
I_1
\leq
\sum_{k\le k_i}
\alpha_k\left( \aver{2^kQ_i^t} | G(x)|^{p_0} dx \right)^{1/p_0}
\lesssim
\tilde{a}(Q)\,\sum_{k\ge 2}\alpha_k
\lesssim \tilde{a}(Q).
\end{align*}
The second inequality follows from \eqref{Whitney-prop} taking into account that here the level sets are done with $t=\tilde{a}(Q)$, and we have use that $\{\alpha_k\}_{k\ge 2}\in\ell^1$.

For $I_2$ we use that for every if $k>k_i$ we have $2^{k}\,Q_i\subset 2^{k-k_i+1}\,Q$ with $|2^{k}\,Q_i|\approx |2^{k-k_i+1}\,Q|$. Hence \eqref{hyp:BQ:2k-Q} and $a\in D_\infty$ yields
\begin{multline*}
I_2
\le
\sum_{k> k_i}\alpha_k\left( \aver{2^{k-k_i+1} Q} \big| B_Q f(x) \big|^{p_0} dx \right)^{1/p_0}
\\
\le
\sum_{k>k_i}\alpha_k a(2^{k-k_i+1} Q)
\lesssim
\sum_{k\ge 2}\alpha_k a(2^{k} Q)
\le
\tilde{a}(Q).
\end{multline*}

Gathering all the obtained estimates the proof is completed.

\qed

\subsection{Proof of Theorem \ref{theor:exponential}, (\ref{conclusion:theor-expo:w})}
The proof is essentially the same and we only point out the few changes. Since $w\in A_\infty$ we have that $w\in RH_r$ for some $r>1$ and therefore
\eqref{eq:RH-sets} gives $w(E)/w(Q)\le C_w(|E|/|Q|)^{1/r'}$ for every $E\subset Q$. Taking the constant in \eqref{conclusion:theor-expo:w} large enough we have as before
$
|\Omega|\le
e^{-r'(1+\log C_w)}\,|Q|.
$
Next we define a new function $\varphi(t)=\sup_R w(E(R,t))/w(R)$. Then proceeding as before and using \eqref{pointwise-BQ-BQi} we obtain, for every $t>C_0$,
\begin{multline*}
w(E(Q,t))
\le
\varphi(t-C_0)\,\sum_{i: Q_i\subset Q} w(Q_i)
=
\varphi(t-C_0)\,w( \Omega \cap Q)
\\
\le
\varphi(t-C_0)\,C_w\,w(Q)\,\left(\frac{|\Omega\cap Q|}{|Q|}\right)^{1/r'}\le
\varphi(t-C_0)\,e^{-1}\,w(Q).
\end{multline*}
From here the rest of the proof extends \textit{mutatis mutandis} with $dw$ replacing $dx$.

\subsection{Proofs related to the applications}

\subsubsection{Proof of Lemma \ref{lemma:D-q:EPI:w}}

The argument uses some ideas from \cite{JM} and runs parallel to the proof of \cite[Proposition 4.1]{BJM}, therefore we only give the main details.
It is well-known (see for instance \cite{GR} or \cite{Grafakos}) that given $w\in A_1$ there exists $0<\theta\le 1$ such that for every cube $Q$ and for every measurable subset $S\subset Q$ we have
\begin{equation}\label{w-A1-Ainfty}
\frac{|S|}{|Q|}\lesssim \frac{w(S)}{w(Q)}\lesssim \left(\frac{|S|}{|Q|}\right)^{\theta}.
\end{equation}
The first inequality follows at one from $w\in A_1$ and the second one uses that $w\in RH_{(1/\theta)'}$ for some $0<\theta\le 1$ ---notice that we allow $\theta=1$ to cover the unweighted case, i.e., $w\equiv 1$.
We write
$$
a(Q)=\sum_{k=0}^\infty \gamma_k\,a_0(2^k\,Q),\qquad\quad a_0(Q)=\ell(Q)\,\left(\aver{Q} h^s\, dw\right)^{1/s}.
$$
We may assume that $s\le q<s^*$ since the $D_q$ conditions are decreasing. Fix a cube $Q$ and a family $\{Q_i\}_i\subset Q$ of pairwise
disjoint cubes.  Then, Minkowski's inequality and $q\geq s$ yield
$$
\Big( \sum_{i} a(Q_i)^{q}\, w(Q_i) \Big)^{\frac1q}
\le
\sum_{k= 0}^\infty \gamma_k\,\Big(\sum_i \frac{\ell(2^k\,Q_i)^s \, w(Q_i)^{\frac{s}{q}}}{w(2^k\,Q_i)}\,\int_{2^k\,Q_i} h^s\,dw
\Big)^{\frac1s}
:=
\sum_{k= 0}^\infty \gamma_k \,I_k^{\frac1s}.
$$
If $k=0$, we use  $s\leq q<s^*$ and \eqref{w-A1-Ainfty} (indeed the left hand side inequality) to obtain for every $i$
$$
\left(\frac{\ell(Q_i)}{\ell(Q)}\right)^s \left(\frac{w(Q)}{w(Q_i)}\right)^{1-\frac{s}{q}}
\lesssim
\left(\frac{\ell(Q_i)}{\ell(Q)}\right)^s \left(\frac{|Q|}{|Q_i|}\right)^{1-\frac{s}{q}}
=
\left(\frac{|Q_i|}{|Q|}\right)^{s\,(\frac1q-\frac{1}{s^*})}
\le 1
$$
Hence, since the cubes $Q_i\subset Q$ are pairwise disjoint, we get
$$ I_0 \lesssim
\frac{\ell(Q)^s \, w(Q)^{\frac{s}{q}}}{w(Q)}\,
\sum_i \int_{Q_i} h^s\,dw
\le
\frac{\ell(Q)^s \, w(Q)^{\frac{s}{q}}}{w(Q)}\,
\int_{Q} h^s\,dw
=
a_0(Q)^s\,w(Q)^{\frac{s}{q}}
.
$$

For $k\ge 1$ we arrange the cubes according to their sidelength and use an estimate of the overlap: given $l\ge 1$, write $E_l=\{Q_i:2^{-l}\, \ell(Q)<\ell(Q_i)\le  2^{-l+1}\, \ell(Q)\}$. As it is obtained in \cite[Lemma 4.3]{JM} we have that $2^k\,Q_i\subset 2^{\max\{k-l+1,0\}+1}Q$. Let us observe that $\# E_l\lesssim 2^{l\,n}$ since all the cubes in $E_l$ have comparable size, are disjoint and contained in $Q$. On the other hand it is shown in \cite[Lemma 4.3]{JM} that the overlap is at most $C\,2^{k\,n}$. Therefore we have $\sum_{Q_i\in E_l} \chi_{2^k\,Q_i}\lesssim 2^{n\min\{l,k\}}\,\chi_{2^{\max\{k-l+1,0\}+1}Q}$. Using this, $s<n$ and $s\leq q<s^*$:
$$
I_k
=
\sum_{l=1}^\infty \sum_{Q_i\in E_l}\frac{\ell(2^k\,Q_i)^s \, w(Q_i)^{\frac{s}{q}}}{w(2^k\,Q_i)}\,\int_{2^k\,Q_i} h^s\,dw
=
\sum_{l=1}^{k+1}\cdots+\sum_{l=k+2}^\infty\cdots
=\Sigma_1+\Sigma_2.
$$
For $\Sigma_1$, using \eqref{w-A1-Ainfty} and the previous observations we have
\begin{align*}
\Sigma_1
&=
\sum_{l=1}^{k+1}\frac{\ell(2^{k-l+2}\,Q)^s\,w(Q)^{\frac{s}{q}}}{w(2^{k-l+2}\,Q)}
\sum_{Q_i\in E_l}
\bigg(\frac{\ell(2^{k}\,Q_i)}{\ell(2^{k-l+2}\,Q)}\bigg)^s\,
\bigg(\frac{w(Q_i)}{w(Q)}\bigg)^{\frac{s}{q}}\,
\frac{w(2^{k-l+2}\,Q)}{w(2^k\,Q_i)}
\\
&\hskip8cm\times
\int_{2^k\,Q_i} h^s\,dw
\\
&
\lesssim
w(Q)^{\frac{s}{q}}\sum_{l=1}^{k+1}
\frac{\ell(2^{k-l+2}\,Q)^s}{w(2^{k-l+2}\,Q)}\,2^{-l\,\theta\,n\,\frac{s}{q}}\,\sum_{Q_i\in E_l} \int_{2^k\,Q_i} h^s\,dw
\\
&\lesssim
w(Q)^{\frac{s}{q}}\sum_{l=1}^{k+1}
\frac{\ell(2^{k-l+2}\,Q)^s}{w(2^{k-l+2}\,Q)}\,2^{-l\,\theta\,n\,\frac{s}{q}}\,2^{n\,l}\int_{2^{k-l+2}\,Q} h^s\,dw
\\
&
=
w(Q)^{\frac{s}{q}}\sum_{l=1}^{k+1}2^{l\,n\,(1-\theta\,\frac{s}{q})}\,a_0(2^{k-l+2}\,Q)^s
\\
&
=
w(Q)^{\frac{s}{q}}\,2^{k\,n\,(1-\theta\,\frac{s}{q})} \sum_{l=1}^{k+1}2^{-l\,n\,(1-\theta\,\frac{s}{q})}\,a_0(2^{l}\,Q)^s.
\end{align*}
Analogously, for $\Sigma_2$,  using the fact that $s\le q<s^*$ we obtain
\begin{align*}
\Sigma_2
&=
\frac{\ell(2\,Q)^s\,w(Q)^{\frac{s}{q}}}{w(2\,Q)}
\,
\sum_{l=k+2}^\infty\sum_{Q_i\in E_l}
\bigg(\frac{\ell(2^{k}\,Q_i)}{\ell (2\,Q)}\bigg)^s\,
\bigg(
\frac{w(Q_i)}{w(2^k\,Q_i)}\,\frac{w(2\,Q)}{w(Q)}
\bigg)^{\frac{s}{q}}
\\
&\hskip8cm\times\bigg(\frac{w(2\,Q)}{w(2^k\,Q_i)}\bigg)^{1-\frac{s}{q}}\,\int_{2^k\,Q_i} h^s\,dw
\\
&\lesssim
\frac{\ell(2\,Q)^s\,w(Q)^{\frac{s}{q}}}{w(2\,Q)}
\,2^{k\,s\,(1+n\,\frac{1-\theta}{q})}\,2^{-k\,n}
\sum_{l=k+2}^\infty 2^{-l\,(s+n\,\frac{s}{q}-n)} \sum_{Q_i\in E_l}\int_{2^k\,Q_i} h^s\,dw
\\
&
\lesssim
\frac{\ell(2\,Q)^s\,w(Q)^{\frac{s}{q}}}{w(2\,Q)}\,\left(\int_{2\,Q} h^s\,dw\right)\,
2^{k\,s\,(1+n\,\frac{1-\theta}{q})}
\sum_{l=k+2}^\infty 2^{-l\,(s+n\,\frac{s}{q}-n)}
\\
&\lesssim
w(Q)^{\frac{s}{q}}\,2^{k\,n\,(1-\theta\,\frac{s}{q})}\,a_0(2\,Q)^s.
\end{align*}
Let us note that the last quantity corresponds to the term $l=1$ in the estimate for $\Sigma_1$. Gathering the obtained inequalities we conclude that
\begin{multline*}
\Big( \sum_{i} a(Q_i)^{q}\, w(Q_i) \Big)^{\frac1q}
\\
\lesssim
\gamma_0\,a_0(Q)\,w(Q)^{\frac1q}+w(Q)^{\frac1q}\sum_{k=1}^\infty \gamma_k\, 2^{k\,n\,(\frac1s-\frac{\theta}q)}\,\sum_{l=1}^{k+1} 2^{-l\,n\,(\frac1s-\frac{\theta}{q})} a_0(2^l\,Q)
\\
\le
\sum_{k=0}^\infty\bar{\gamma}_k\,a_0(2^k\,Q)\,w(Q)^{\frac1q}
=
\bar{a}(Q)\,w(Q)^{\frac1q},
\end{multline*}
where $\bar{\gamma}_0=C\,\gamma_0$ and $\bar{\gamma}_k=2^{-k\,n\,(\frac1s-\frac{\theta}q)}\,\sum_{l=k-1}^\infty \gamma_l\,2^{l\,n\,(\frac1s-\frac{\theta}{q})}$, $k\ge 1$ ---notice that here we implicitly use that $\gamma_k$ is a fast decay sequence, since otherwise the coefficient $\bar{\gamma}_k$ would be infinity.
This completes the proof since we have shown that $(a,\bar{a})\in D_q(w)$. \qed

\begin{remark}
We would like to call the reader's attention to the fact that, in the previous argument, it was crucial that
$s\leq q<s^*$, since otherwise the geometric sum for the terms $l\ge k+2$ diverges.
\end{remark}

\subsubsection{Proof of Proposition \ref{prop:off-semi}}
We fix $N\ge 1$. Abusing the notation we write $A_Q$ and $B_Q$ in place of $A_{Q,N}$ and $B_{Q,N}$.
Property $(a)$ follows at once after expanding $B_Q$ and using the semigroup property $e^{-t\,L}e^{-s L}=e^{-(t+s)\,L}$. Regarding $(b)$ we invoke the theory of off-diagonal estimates developed in \cite{AM2}. Our assumption \eqref{ass:offdiag}  implies that $\{e^{-t\,L}\}_{t>0}$ satisfies $L^{p_0}(\re^n)-L^{q_0}(\re^n)$ full off-diagonal estimates which are equivalent to the $L^{p_0}(\re^n)-L^{q_0}(\re^n)$ off-diagonal estimates on balls \cite[Section 3.1]{AM2}, and these imply uniform boundedness of $e^{-t\,L}$ on $L^{p_0}(\re^n)$, \cite[Theorem 2.3]{AM2}. This yields $(b)$ after expanding $B_Q$. To see $(c)$ we expand $A_Q$ and observe that it suffices to prove that for every cube, $e^{-l\,\ell(Q)^2L}$ verifies \eqref{AQ:on:p-q} and \eqref{AQ:off:p-q} for every $1\le l\le N$. Fixed such $l$ we use \eqref{ass:offdiag} to obtain \eqref{AQ:on:p-q}:
\begin{align*}
\left(\aver{2Q} \left|e^{-l\,\ell(Q)^2 L} (f\,\chi_{4\,Q})\right|^{q_0}\,dx\right)^{1/q_0}
& \le
C\,|2Q|^{-1/q_0}\,(l\,\ell(Q)^2)^{-\frac{n}{2}\left(\frac{1}{p_0}-\frac{1}{q_0}\right)}\,\left(\int_{4Q} |f|^{p_0}\,dx\right)^{1/p_0}
\\
& \lesssim C\,\left(\aver{4Q} |f|^{p_0}\,dx \right)^{1/p_0}.
\end{align*}
On the other hand, using again \eqref{ass:offdiag} we show \eqref{AQ:off:p-q}:
\begin{align*}
&\left(\aver{2^{j}Q} \left|e^{-l\,\ell(Q)^2 L} (f\,\chi_{\re^n\setminus 2^{j+1}\,Q})\right|^{q_0}\,dx\right)^{1/q_0}
\\
&\qquad\le
\sum_{k\ge 2} \left(\aver{2^{j}Q} \left|e^{-l\,\ell(Q)^2 L} (f\,\chi_{2^{k+j}Q\setminus 2^{k+j-1}Q})\right|^{q_0}\,dx\right)^{1/q_0}
\\
&\qquad\lesssim
\sum_{k\ge 2} |2^jQ|^{-1/q_0} (l\,\ell(Q)^2 )^{-\frac{n}{2}\left(\frac{1}{p_0}-\frac{1}{q_0}\right)}\,e^{-c\,\frac{4^{j+k}\,\ell(Q)^2}{l\,\ell(Q)^2}}\, \left(\int_{2^{k+j}Q} |f|^{p_0} \right)^{1/p_0}
\\
&\qquad\lesssim \sum_{k\geq 2} e^{-c\,4^{k+j}} \left(\aver{2^{k+j} Q} |f|^{p_0}\,dx \right)^{1/p_0}.
\end{align*}
As noted above the constant $c$ is irrelevant, provided it remains positive, and therefore $c$ may change from line to line.

The proof of \eqref{BR-AQ:off:p-q} is more delicate and we need to exploit the fact that we have off-diagonal decay for $(t\,L)^k\,e^{-t\,L}$ with $0\le k\le N$ and that $N\ge n/(2\,q_0)$.
As before we expand $A_Q$ and so it suffices to prove \eqref{BR-AQ:off:p-q} for the operator
$$
S_{Q,R}:= \left(\Id-e^{-\ell(R)^2\, L}\right)^N e^{-l\,\ell(Q)^2\,L},
$$
with $1\le l\le N$ fixed and where $R\subset Q$. Let us now point out that we can factor out some power of $L$ in the previous operator. Indeed, the strong continuity (in $L^{p_0}$) of the semigroup at $t=0$ (more precisely $e^{-tL} \to {\mathcal I}$ for $t\to 0$) with Remark \ref{rem:continuity} give us that, for every $s>0$,
\begin{equation}
sL\left(\aver{[0,s]} e^{-\lambda\, L} d\lambda \right)
=
\int_0^s L e^{-\lambda\, L} d \lambda = \Id-e^{-s\, L},
\label{eq:Us}
\end{equation}
where this equality holds in the sense of $L^p(\re^n)$-bounded operators, for every finite $p\in[p_0,q_0]$, $p<\infty$.
Hence for $s>0$, we define
$$
U_s:= \left(\aver{[0,s]} e^{-\lambda\, L} d\lambda\right)^N
$$
and $S_{Q,R}$ can be written as follows:
\begin{align*}
S_{Q,R}
&=
\left(\frac{\ell(R)}{\sqrt{l}\ell(Q)}\right)^{2N}  (l\,\ell(Q)^2\,L)^N\,e^{-l\,\ell(Q)^2\,L}\,U_{\ell(R)^2}.
 \end{align*}
Let us observe that writing the operator in this way we have obtained an extra factor $(\ell(R)/\ell(Q))^{2\,N}$, which is small since $R\subset Q$.

We have the following off-diagonal estimates for $U_s$, the proof is given below.

\begin{lemma} \label{lemma:Us}
The family of operator $\{U_s\}_{s>0}$ satisfies $L^{p_0}(\re^n)-L^{p_0}(\re^n)$ off-diagonal estimates: for all closed sets $E,F$ and functions $f$ supported in $F$
\begin{equation} \label{off-Us}
\left(\int_E \left|U_s f\right|^{p_0} dx\right)^{1/p_0} \lesssim e^{-c\frac{d(E,F)^2}{s}} \left(\int_{F} |f|^{p_0} dx\right)^{1/p_0}.
\end{equation}
\end{lemma}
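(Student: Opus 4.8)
The plan is to reduce the statement to a single-operator estimate for the averaged semigroup $V_s:=\aver{[0,s]}e^{-\lambda L}\,d\lambda$ and then to iterate, since $U_s=V_s^N$. First I would note that, by the off-diagonal theory of \cite{AM2}, the $L^{p_0}(\re^n)-L^{q_0}(\re^n)$ off-diagonal estimates \eqref{ass:offdiag} imply that $\{e^{-\lambda L}\}_{\lambda>0}$ is uniformly bounded on $L^{p_0}(\re^n)$ and satisfies $L^{p_0}(\re^n)-L^{p_0}(\re^n)$ off-diagonal estimates, i.e.\ for all closed sets $E,F$ and all $f$ supported in $F$,
\[
\|e^{-\lambda L}f\|_{L^{p_0}(E)}\lesssim e^{-c\,d(E,F)^2/\lambda}\,\|f\|_{L^{p_0}(F)}.
\]
In particular $V_s$ is a well-defined, uniformly bounded operator on $L^{p_0}(\re^n)$ (the Bochner integral converges by the strong continuity recalled in Remark \ref{rem:continuity}), and so is $U_s=V_s^N$. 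Then, for $E,F$ as above with $d:=d(E,F)$, Minkowski's integral inequality together with the elementary bound $e^{-c\,d^2/\lambda}\le e^{-c\,d^2/s}$ valid for $0<\lambda\le s$ gives
\[
\|V_s f\|_{L^{p_0}(E)}\le \frac1s\int_0^s\|e^{-\lambda L}f\|_{L^{p_0}(E)}\,d\lambda
\lesssim \Big(\frac1s\int_0^s e^{-c\,d^2/\lambda}\,d\lambda\Big)\|f\|_{L^{p_0}(F)}
\lesssim e^{-c\,d^2/s}\,\|f\|_{L^{p_0}(F)},
\]
so $V_s$ itself satisfies $L^{p_0}-L^{p_0}$ off-diagonal estimates of the desired type.

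To pass from $V_s$ to $U_s=V_s^N$ I would argue by induction on $N$, the case $N=1$ being the estimate just proved. For the inductive step, fix $E,F$ closed with $f$ supported in $F$, put $d=d(E,F)$, and introduce the collar $G:=\{x\in\re^n:\ d(x,F)\le \tfrac{N-1}{N}\,d\}\supseteq F$. Writing $U_s f=V_s\big(V_s^{N-1}f\big)$ and splitting $V_s^{N-1}f=\big(V_s^{N-1}f\big)\chi_G+\big(V_s^{N-1}f\big)\chi_{G^c}$, the first piece is handled by the off-diagonal estimate for the outer $V_s$ (note $d(E,G)\ge d/N$) and the uniform $L^{p_0}$-boundedness of $V_s^{N-1}$, while the second piece is handled by the uniform $L^{p_0}$-boundedness of the outer $V_s$ and the induction hypothesis applied to $V_s^{N-1}$ on the pair $(G^c,F)$ (note $d(G^c,F)\ge \tfrac{N-1}{N}\,d$). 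Both contributions are $\lesssim e^{-c\,d^2/(N^2 s)}\|f\|_{L^{p_0}(F)}$, which closes the induction; since the exact value of the exponential constant is irrelevant (as already noted after \eqref{ass:offdiag}), this yields \eqref{off-Us}.

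There is no genuine obstacle here: the only points needing a line of justification are the passage from the $L^{p_0}-L^{q_0}$ to the $L^{p_0}-L^{p_0}$ off-diagonal estimates for the semigroup --- which is standard and contained in \cite{AM2} --- and the bookkeeping in the $N$-fold composition, which is just the elementary ``composition lemma'' for off-diagonal families specialized to Gaussian decay. The mildly delicate choice in the iteration is that of the collar sets: one must split the distance $d$ into $N$ equal pieces so that at each step one of the two parts genuinely sees a separation of order $d/N$ while the other already carries the decay accumulated at the previous step.
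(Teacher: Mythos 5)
Your proposal is correct and follows essentially the same route as the paper: derive $L^{p_0}\text{--}L^{p_0}$ off-diagonal estimates for $\{e^{-\lambda L}\}_\lambda$ from \eqref{ass:offdiag} via \cite{AM2}, then apply Minkowski's inequality and the monotonicity $e^{-c\,d^2/\lambda}\le e^{-c\,d^2/s}$ for $0<\lambda\le s$ to handle the single average $V_s$, and finally pass to $U_s=V_s^N$ by composition. The only point of divergence is that for the composition step the paper simply cites the stability of off-diagonal estimates under composition (\cite[Lemma 2.3]{Hof-Mar}, \cite[Sections 2.4, 3.1]{AM2}) and reduces at the outset to $N=1$, whereas you spell out a self-contained inductive argument with collar sets $G=\{d(\cdot,F)\le\tfrac{N-1}{N}d\}$; this is precisely the standard proof of the cited composition lemma, so the substance is the same and nothing is gained or lost beyond a citation versus a few lines of bookkeeping.
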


Since $R\subset Q$ we can find a unique $j\ge 0$ such that $2^j\ell(R)\le \ell(Q)<2^{j+1}\,\ell(R)$. Then $2^jR\subset 2Q$ and $|2^jR|\approx |Q|$. Using the off-diagonal decay of $(l\,\ell(Q)^2\,L)^N\,e^{-l\,\ell(Q)^2\,L}$ we get
\begin{align*}
&\left(\aver{2R} |(l\,\ell(Q)^2\,L)^N\,e^{-l\,\ell(Q)^2\,L} h|^{q_0} dx\right)^{1/q_0}
\\
&\quad\le
\left(\aver{2R} |(l\,\ell(Q)^2\,L)^N\,e^{-l\,\ell(Q)^2\,L} (h\,\chi_{2^{j+2} R})|^{q_0} dx\right)^{1/q_0}
\\
&\qquad\quad+
\sum_{k=3}^\infty \left(\aver{2R} |(l\,\ell(Q)^2\,L)^N\,e^{-l\,\ell(Q)^2\,L} (h\,\chi_{2^{k+j}R\setminus 2^{k+j-1} R})|^{q_0} dx\right)^{1/q_0}
\\
&\quad\lesssim
|2R|^{-1/q_0}\,|2^{j+2} R|^{1/p_0}\,(l\,\ell(Q)^2)^{-\frac{n}{2}\left(\frac{1}{p_0}-\frac{1}{q_0}\right)}\,\left(\aver{2^{j+2}R} |h|^{p_0} dx\right)^{1/p_0}
\\
&
\qquad\quad+
\sum_{k=3}^\infty |2R|^{-1/q_0}\,|2^{k+j} R|^{1/p_0}\,(l\,\ell(Q)^2)^{-\frac{n}{2}\left(\frac{1}{p_0}-\frac{1}{q_0}\right)}\,e^{-c\frac{4^{k+j}\,\ell(R)^2}{l\,\ell(Q)^2}}\left(\aver{2^{k+j}R} |h|^{p_0} dx\right)^{1/p_0}
\\
&\quad\lesssim
\left(\frac{\ell(Q)}{\ell(R)}\right)^{\frac{n}{q_0}}\,
\sum_{k=3}^\infty e^{-c\,4^k}\,\left(\aver{2^{k}Q} |h|^{p_0} dx\right)^{1/p_0}.
\end{align*}
We apply this estimate with $h=U_{\ell(R)^2} f$ and \eqref{off-Us} and use that $N\ge n/(2\,q_0)$ to obtain
\begin{align*}
&\left(\aver{2R} \left|S_{Q,R} f\right|^{q_0} \right)^{1/q_0}
\lesssim
\left(\frac{\ell(R)}{\ell(Q)}\right)^{2N}  \left(\frac{\ell(Q)}{\ell(R)}\right)^{\frac{n}{q_0}}\,
\sum_{k=3}^\infty e^{-c\,4^k}\,\left(\aver{2^{k}Q} |U_{\ell(R)^2} f|^{p_0} dx\right)^{1/p_0}
\\
&\quad\le
\sum_{k=3}^\infty e^{-c\,4^k}\,\left(\aver{2^{k}Q} |U_{\ell(R)^2} (f\,\chi_{2^{k+1}Q})|^{p_0} dx\right)^{1/p_0}
\\
&\qquad\qquad+
\sum_{k=3}^\infty e^{-c\,4^k}\,\sum_{l=2}^\infty\left(\aver{2^{k}Q} |U_{\ell(R)^2} (f\,\chi_{2^{k+l}Q\setminus 2^{k+l-1}Q})|^{p_0} dx\right)^{1/p_0}
\\
&\quad\lesssim
\sum_{k=3}^\infty e^{-c\,4^k}\,\left(\aver{2^{k+1}Q} |f|^{p_0} dx\right)^{1/p_0}
+
\sum_{k=3}^\infty e^{-c\,4^k}\,\sum_{l=2}^\infty e^{-c\,\frac{4^{k+l}\,\ell(Q)^2}{\ell(R)^2}}\,
\left(\aver{2^{k+l}Q} |f|^{p_0} dx\right)^{1/p_0}
\\
&
\quad\lesssim
\sum_{k=4}^\infty e^{-c\,4^k}\,\left(\aver{2^{k+1}Q} |f|^{p_0} dx\right)^{1/p_0}.
\end{align*}
This completes the proof of \eqref{BR-AQ:off:p-q} for $S_{Q,R}$ with $R\subset Q$ and so for $B_RA_Q$.
\qed

\begin{proof}[Proof of Lemma \ref{lemma:Us}]
Off-diagonal estimates are stable under composition (see for instance \cite[Lemma 2.3]{Hof-Mar} or \cite[Sections 2.4, 3.1]{AM2}). Thus it suffices to obtain the desired estimate in the case $N=1$.

As observed before \eqref{ass:offdiag}  implies that $\{e^{-t\,L}\}_{t>0}$ satisfies $L^{p_0}(\re^n)-L^{q_0}(\re^n)$ off-diagonal estimates on balls \cite[Section 3.1]{AM2}, thus we have  $L^{p_0}(\re^n)-L^{p_0}(\re^n)$ off-diagonal estimates on balls (see \cite[Sections 2.1]{AM2}). These in turn are equivalent to the $L^{p_0}(\re^n)-L^{p_0}(\re^n)$ (full) off-diagonal estimates for $\{e^{-t\,L}\}_{t>0}$ by \cite[Section 3.1]{AM2}. This and Minkowski's inequality allow us to obtain that for all closed sets $E,F$ and functions $f$ supported in $F$
\begin{multline*}
\left(\int_E \left| \aver{[0,s]} e^{-\lambda\, L} f d\lambda  \right|^{p_0} dx\right)^{1/p_0}
\le
\aver{[0,s]} \left(\int_E |e^{-\lambda\, L} f|^{p_0} dx\right)^{1/p_0} d\lambda
\\
\lesssim
\aver{[0,s]} e^{-c\,\frac{d(E,F)^2}{\lambda}}\left(\int_F |f|^{p_0} dx\right)^{1/p_0} d\lambda
\le
e^{-c\,\frac{d(E,F)^2}{s}}\left(\int_F |f|^{p_0} dx\right)^{1/p_0}.
\end{multline*}
\end{proof}

\begin{remark} Let us point out that under some invertibility properties on the generator $L$, the computation done in \eqref{eq:Us} implies that
$$  \aver{[0,s]} e^{-\lambda\, L} d\lambda= (s\,L)^{-1}\big(\Id-e^{-s\,L}\big),
$$
which gives $U_s=(s\,L)^{-N}\big(\Id-e^{-s\,L}\big)^N$. For example, this is the case in the applications discussed in Subsection \ref{subsec:appli}, where $L$ is an elliptic second-order divergence form operator.
\end{remark}

\subsubsection{Proof of Lemma \ref{lemma:Poincare-Hyp-k}} The argument is a combination of ideas from \cite{JM} (see also the proof of Lemma \ref{lemma:D-q:EPI} above). Let us write
$$
a(Q)=\sum_{j=0}^\infty \eta_j\,a_0(2^j\,Q),\qquad\quad a_0(Q)=\ell(Q)\,\left(\aver{Q} h^s\, dx\right)^{1/s}.
$$
By subdividing the cube $2^kQ$ we have a family $\{Q_i\}_{i=1}^{2^{k\,n}}$ of disjoint cubes such that $\ell(Q_i)=\ell(Q)$ and $\cup_i Q_i=2^k Q$.
We claim that
\begin{equation}\label{eqn:Ds-equal-size}
\Big(\sum_i a(Q_i)^s\,|Q_i|\Big)^{\frac1s}
\lesssim
a(2^k\,Q)\,|2^k\,Q|^{\frac1s}.
\end{equation}
Notice that if we used Lemma \ref{lemma:D-q:EPI} above we would get the bigger functional $\bar{a}$ on the right hand side. Here we obtain a better estimate since all the cubes in our family $\{Q_i\}_i$ have the same sidelength.

Using \eqref{eqn:Ds-equal-size}, \eqref{eqn:hypo-EPI} and H\"older's inequality we have
\begin{multline*}
\Big(\int_{2^k\,Q} \big|\big(\Id-e^{-\ell(Q)^2 L }\big)^{N}f(x)\big|^p\,dx\Big)^{\frac1p}
=
\Big(\sum_i \int_{Q_i} \big|\big(\Id-e^{-\ell(Q_i)^2 L }\big)^{N}f(x)\big|^p\,dx\Big)^{\frac1p}
\\
\le
\Big(\sum_i a(Q_i)^p\,|Q_i|\Big)^{\frac1p}
\le
\Big(\sum_i a(Q_i)^s\,|Q_i|\Big)^{\frac1s}\,\Big(\sum_i |Q_i|\Big)^{\frac1{p\,(s/p)'}}
\lesssim
a(2^k\,Q)\,|2^k\,Q|^{\frac1p},
\end{multline*}
and this is the desired inequality.

We show our claim \eqref{eqn:Ds-equal-size}. By Minkowski's inequality we have
\begin{multline*}
\Big(\sum_i a(Q_i)^s\,|Q_i|\Big)^{\frac1s}
=
\Big(\sum_i \Big(\sum_{j=0}^\infty \eta_j\, a_0(2^jQ_i)\Big)^s\,|Q_i|\Big)^{\frac1s}
\\
\le
\sum_{j=0}^\infty \eta_j\, \Big(\sum_i a_0(2^jQ_i)^s\,|Q_i|\Big)^{\frac1s}
=
\sum_{j=0}^\infty \eta_j\, I_j.
\end{multline*}
For a fixed $j$ we have that $2^j\,Q_i\subset 2^{\max\{j,k\}+1}Q$. Let us observe that $\sum_{i} \chi_{2^j\,Q_i}\lesssim 2^{n\min\{k,j\}}\,\chi_{2^{\max\{j,k\}+1}Q}$ since we have $2^{k\,n}$ cubes and it is shown in \cite[Lemma 4.3]{JM} that there are at most $2^{n\,(j+4)}$ cubes $Q_i$ such that $2^j\,Q_i$ meets some fixed $2^j\,Q_{i_0}$. Hence,
\begin{multline*}
I_j
=
2^{j(1-\frac{n}s)}\,\ell(Q)\,\Big(\sum_i \int_{2^j\,Q_i}h^s\,dx\Big)^{\frac1s}
\lesssim
2^{j(1-\frac{n}{s})} 2^{\frac{n}{s}\min\{k,j\}}\,\ell(Q)\,\Big(\int_{2^{\max\{j,k\}+1}Q }h^s\,dx\Big)^{\frac1s}
\\
=
C\,
2^{-\max\{k-j,0\}}\,a_0(2^{\max\{j,k\}+1}Q)\,|2^k\,Q|^{\frac1s}.
\end{multline*}

Thus,
\begin{multline*}
\Big(\sum_i a(Q_i)^s\,|Q_i|\Big)^{\frac1s}
\lesssim
|2^k\,Q|^{\frac1s}\sum_{j=0}^\infty \eta_j\,2^{-\max\{k-j,0\}}a_0(2^{\max\{j,k\}+1}Q)
\\
=
|2^k\,Q|^{\frac1s}\Big(a_0(2^{k+1}\,Q)\,\sum_{j=0}^k \eta_j\,2^{j-k}
+
\sum_{j=k+1}^\infty \eta_j\,a_0(2^{j+1}Q)\Big)
\\
\lesssim
|2^k\,Q|^{\frac1s}\Big(a_0(2^{k+1}\,Q)+\sum_{j=k+1}^\infty\eta_{j-k+1}\,a_0(2^{j+1}\,Q)\Big)
\lesssim
|2^k\,Q|^{\frac1s}\,a(2^{k}Q)
\end{multline*}
where we have used that the sequence $\eta_k$ is quasi-decreasing and that $\eta_1>0$.\qed

\end{document}